\documentclass[11pt]{article}
	
\usepackage{amsmath, amsthm, amsfonts, amsxtra, fullpage, color, hyperref, url}
\usepackage{graphicx}%, psfrag}
\usepackage[inline]{enumitem}
\numberwithin{equation}{section}

%\allowdisplaybreaks

%\usepackage[color]{showkeys}     % refs and labels

\definecolor{refkey}{gray}{.5}   % graylevel for refs
\definecolor{labelkey}{gray}{.5} % graylevel for labels

\newcommand{\K}{\mathbf{K}}
\newcommand{\E}{\mathbf{E}}
\newcommand{\N}{\mathbb{N}}
\newcommand{\T}{\mathbf{T}}
\newcommand{\Rmelonref}{R^{\reflect, n}_{0 \to T}}
\newcommand{\Rmelonabs}{R^{\absorb, n}_{0 \to T}}

\newcommand{\Prob}{\mathbb{P}}
\newcommand{\bigO}{\mathcal{O}}
\newcommand{\intZ}{\mathbb{Z}}
\newcommand{\realR}{\mathbb{R}}
\newcommand{\compC}{\mathbb{C}}
\newcommand{\NIBMT}{\ensuremath{\text{NIBM}_{0 \to T}}}
\newcommand{\NIBMref}{\ensuremath{\text{NIBM}^{\reflect}_{0 \to T}}}
\newcommand{\NIBMabs}{\ensuremath{\text{NIBM}^{\absorb}_{0 \to T}}}
\newcommand{\lHopital}{l'H\^{o}pital}
\newcommand{\Backlund}{B\"{a}cklund}
\newcommand{\Painleve}{Painlev\'{e}}
\newcommand{\W}{\smash{\overset{\circ}{W}}}
\newcommand{\sg}{\sigma}

\newcommand{\z}{\zeta}
\newcommand{\de}{\delta}
\renewcommand{\d}{\partial}
\newcommand{\Sg}{\Sigma}
\newcommand{\al}{\alpha}
\newcommand{\1}{{\bf 1}}
\newcommand{\B}{{\bf B}}
\newcommand{\A}{\mathcal{A}}
\newcommand{\lcal}{\mathcal{L}}
\newcommand{\Veto}{Vet\H{o}}

\newtheorem{thm}{Theorem}[section]

\newtheorem{lem}[thm]{Lemma}
\newtheorem{prop}[thm]{Proposition}
\newtheorem{conj}[thm]{Conjecture}
\theoremstyle{remark}
\newtheorem{rmk}{Remark}[section]

\DeclareMathOperator{\reflect}{ref}
\DeclareMathOperator{\absorb}{abs}
\DeclareMathOperator{\even}{\,even}
\DeclareMathOperator{\odd}{\,odd}
\DeclareMathOperator{\tac}{tac}
\DeclareMathOperator{\Pearcey}{P}
\DeclareMathOperator{\diag}{diag}
\DeclareMathOperator{\xxx}{xxx}
\DeclareMathOperator{\Ai}{Ai}
\DeclareMathOperator{\ext}{ext}

\title{Nonintersecting Brownian bridges between reflecting or absorbing walls}

\author{
Karl Liechty \thanks{Department of Mathematical Sciences, DePaul University, Chicago, IL, 60614 USA
\href{mailto:kliechty@depaul.edu}{\nolinkurl{kliechty@depaul.edu}}}
 \and
Dong Wang\thanks{Department of Mathematics, National University of Singapore, Singapore, 119076, \href{mailto:matwd@nus.edu.sg}{\nolinkurl{matwd@nus.edu.sg}}}
}

\begin{document}

\maketitle

\begin{abstract}
We study a model of nonintersecting Brownian bridges on an interval with either absorbing or reflecting walls at the boundaries, focusing on the point in space-time at which the particles meet the wall. These processes are determinantal, and in different scaling limits when the particles approach the reflecting (resp. absorbing) walls we obtain hard-edge limiting kernels which are the even (resp. odd) parts of the Pearcey and tacnode kernels. We also show that in the single time case, our hard-edge tacnode kernels are equivalent to the ones studied by Delvaux \cite{Delvaux13a}, defined in terms of a $4\times 4$ Lax pair for the inhomogeneous \Painleve\ II equation (PII). As a technical ingredient in the proof, we construct a Schlesinger transform for the $4 \times 4$ Lax pair in \cite{Delvaux13a} which preserves the Hastings--McLeod solutions to PII.
\end{abstract}

\section{Introduction}

Consider a Brownian motion on the interval $[0,\pi]$ with either an absorbing or reflecting boundary condition at the endpoints of the interval. By the reflection principle \cite[Section X.5]{Feller71}, if both boundaries are reflecting the transition probability is
\begin{equation}\label{eq:reflecting_tp}
  P^{\reflect}_{\sigma}(x, y; t) =  \frac{1}{\sqrt{2\pi t} \sigma} \sum^{\infty}_{k = -\infty} e^{-\frac{(y - x + 2k\pi)^2}{2t \sigma^2}} + e^{-\frac{(y + x + 2k\pi)^2}{2t \sigma^2}}, \end{equation}
and if both boundaries are absorbing it is\footnote{The function \eqref{eq:reflecting_tp} is a genuine probability density on $[0,\pi]$, meaning that for any $0\le x\le \pi$ the integral with respect to $y$ over $[0,\pi]$ is 1. The function \eqref{eq:absorbing_tp} has total integral less than 1 because there is some probability that the particle is absorbed by one of the walls.} 
\begin{equation}\label{eq:absorbing_tp}
  P^{\absorb}_{\sigma}(x, y; t) = \frac{1}{\sqrt{2\pi t} \sigma} \sum^{\infty}_{k = -\infty} e^{-\frac{(y - x + 2k\pi)^2}{2t \sigma^2}} - e^{-\frac{(y + x + 2k\pi)^2}{2t \sigma^2}}.
\end{equation}
In each case, $t$ is time and $\sg>0$ is the diffusion parameter. The model we consider is that of $n\in \N$ such Brownian motions on the interval $[0,\pi]$ which are conditioned not to intersect, which we denote
  \begin{equation}\label{eq:nonintersect}
  X_1(t)<X_2(t)< \dots<X_n(t), 
  \end{equation}
and we fix the diffusion parameter to be equal to the reciprocal square root of the number of Brownian paths, $\sg=n^{-1/2}$. In the absorbing boundary condition case, we also condition so that no particle is absorbed by either wall. Under these conditions we let the particles evolve over the time interval $t\in [0,T]$, and we furthermore we fix the starting and ending points for all particles to be at $0$, i.e.,
\begin{equation}\label{eq:confluent_ends}
  X_1(0)=X_1(T)=X_2(0)=X_2(T)=\dots=X_n(0)=X_n(T) =0.
 \end{equation}
Even though the conditions \eqref{eq:nonintersect} and \eqref{eq:confluent_ends} seem to be contradictory, it is well known that the model of nonintersecting paths with confluent starting/ending points is well defined as a limit of a model in which the starting and ending points of the nonintersecting paths are close to but not equal to one another, see Section \ref{algebraic}.
 
When the boundary conditions for the Brownian motion on $[0,\pi]$ are reflecting (resp. absorbing) we denote the ensemble of nonintersecting paths conditioned as described above as $\NIBMref$ (resp. $\NIBMabs$). These ensembles are \emph{determinantal processes}, which means that the correlation functions are described by a determinant involving a certain \emph{extended kernel} function. More specifically, fix $m$ times $0< t_1 < t_2 < \cdots <t_m<T$, and to each time $t_i$, fix $k_i$ points in the interval $(0,\pi)$, $0 \le x_1^{(i)} <x_2^{(i)}<\cdots<x_{k_i}^{(i)} < \pi$.  The multi-time correlation function is then defined as
\begin{multline} \label{eq:corr_function_defn}
  \Rmelonref(x^{(1)}_1, \dotsc, x^{(1)}_{k_1}; \dotsc; x^{(m)}_1, \dotsc, x^{(m)}_{k_m}; t_1, \dotsc, t_m) := \\
  \lim_{\Delta x \to 0} \frac{1}{(\Delta x)^{k_1 + \dotsb + k_m}} \Prob \left( \text{there is a particle in $[x^{(i)}_j, x^{(i)}_j + \Delta x)$ for $j = 1, \dotsc, k_i$ at time $t_i$} \right),
\end{multline}
for the $\NIBMref$ model, and similarly for the $\NIBMabs$ model, for which we denote the correlation function as $\Rmelonabs$.
Then there exist kernel functions $K^{\reflect}_{t_i, t_j}(x,y)$ and $K^{\absorb}_{t_i, t_j}(x,y)$ such that ($\xxx$ stands for either $\reflect$ or $\absorb$)
\begin{equation} \label{eq:defn_Rmelon_reflect}
  R^{\xxx, n}_{0 \to T}(x^{(1)}_1, \dotsc, x^{(1)}_{k_1}; \dotsc; x^{(m)}_1, \dotsc, x^{(m)}_{k_m}; t_1, \dotsc, t_m) = \det \left( K^{\xxx}_{t_i, t_j}\left(x^{(i)}_{l_i}, x^{(j)}_{l'_j}\right) \right)_{\substack{i, j = 1, \dotsc, m \\ l_i = 1, \dotsc, k_i \\ l'_j = 1, \dotsc, k_j}}\,.
\end{equation}
% and 
% \begin{equation} \label{eq:defn_Rmelon_reflect}
%   \Rmelonabs(x^{(1)}_1, \dotsc, x^{(1)}_{k_1}; \dotsc; x^{(m)}_1, \dotsc, x^{(m)}_{k_m}; t_1, \dotsc, t_m) = \det \left( K^{\absorb}_{t_i, t_j}\left(x^{(i)}_{l_i}, x^{(j)}_{l'_j}\right) \right)_{\substack{i, j = 1, \dotsc, m \\ l_i = 1, \dotsc, k_i \\ l'_j = 1, \dotsc, k_j}}\,.
% \end{equation}

In an earlier paper \cite{Liechty-Wang14-2} of the current authors, we studied a very similar model of nonintersecting Brownian motions on the unit circle. That model can be also considered as one of nonintersecting paths on the interval, but with  periodic rather than absorbing or reflecting boundary conditions. For that model it was shown that the correlation kernel could be expressed in terms of a system of orthogonal polynomials with respect to a discrete Gaussian weight, and that it converged to the \emph{tacnode kernel} and the \emph{Pearcey kernel} in different scaling limits. The result was a formula for the tacnode kernel which involved certain solutions to the Flaschka--Newell Lax pair for the Painlev\'{e} II equation. The nonintersecting Brownian paths on the interval with absorbing or reflecting boundaries also have a correlation kernel expressed in terms of discrete Gaussian orthogonal polynomials, and in this paper we show that it converges under proper scaling limits to the even (resp. odd) part of the tacnode and Pearcey kernels in the case of reflecting (resp. absorbing) boundary conditions.

The dependence of the models $\NIBMref$ and $\NIBMabs$ on the total time $T$ is described heuristically as follows. Since all particles are forced to begin and end at the lower wall, if the total time $T$ is small then the particles will not have time to approach the upper wall at $x=\pi$, and the model is very close to a model of nonintersecting Brownian bridges on the half-line $[0,\infty)$. On the other hand, if $T$ is large enough then the particles will reach the barrier at $x=\pi$ around some critical time $t=t^{c}<T/2$ and the asymptotic limiting density of particles fills the entire interval $[0,\pi]$ throughout the time interval $[t^c, T-t^c]$ before beginning to return to the origin. These two cases are separated by a critical total time $T_c$, and it can be shown as in \cite{Liechty-Wang14-2} that this critical value is $T_c=\pi^2/2$. Therefore, as in \cite{Liechty-Wang14-2} we separate the $\NIBMref$ and $\NIBMabs$ models into three cases: subcritical ($T<\pi^2/2$), critical ($T=\pi^2/2$), and supercritical $T>\pi^2/2$, see Figure \ref{fig:Global_picture}. Note that this global picture is the same for both $\NIBMref$ and $\NIBMabs$.

%%%%%%%%%%%%%%%%%%%%%
\begin{figure}[ht]
    \centering
    \includegraphics[scale=0.5]{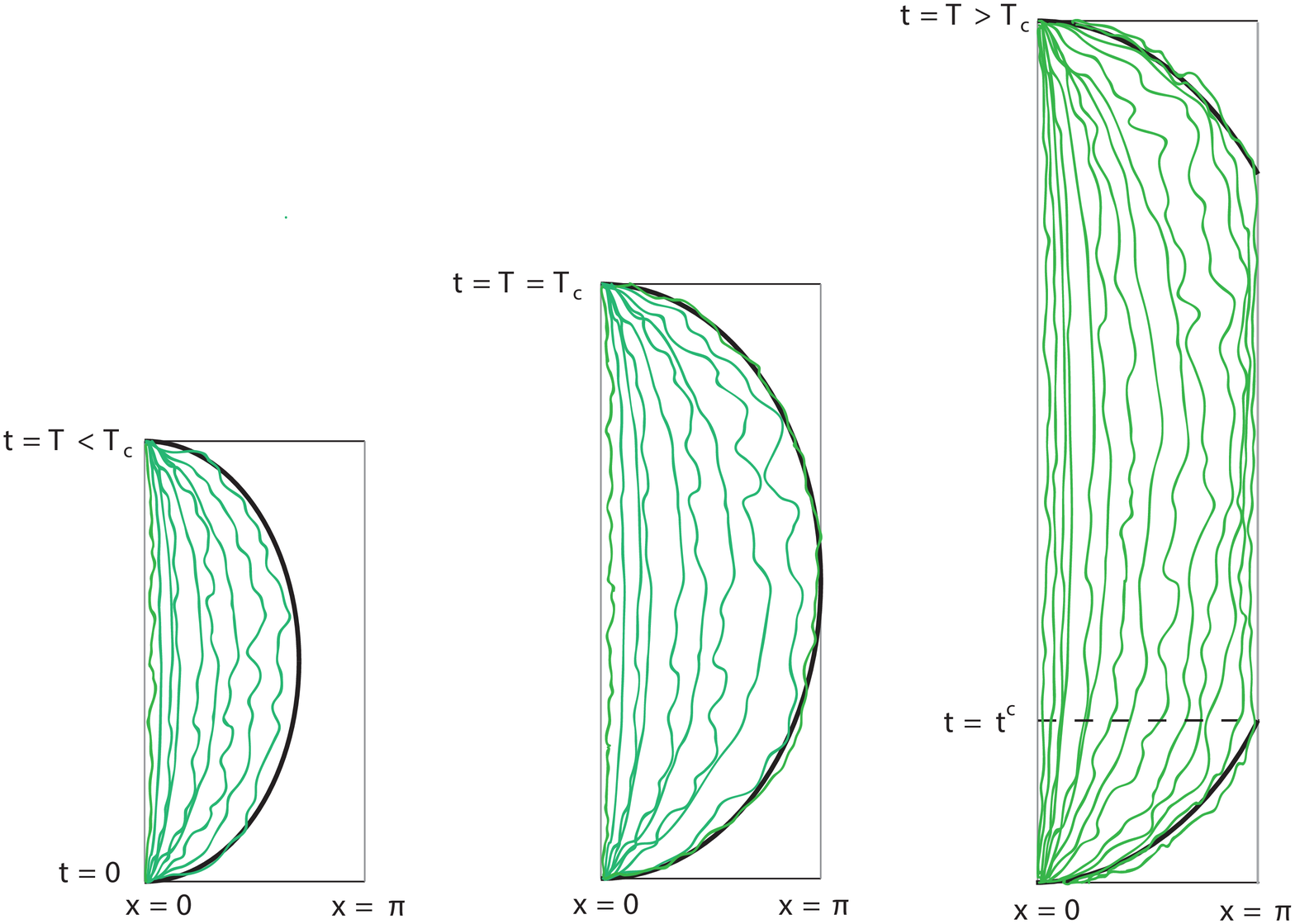}
    \caption{Typical configurations of paths in $\NIBMref$ and $\NIBMabs$ in the subcritical (left), critical (center), and supercritical (right) regimes. Time is on the vertical axis and space on the horizontal axis. As $n\to\infty$ the hull of the paths fills out the regions bounded by the thick curves. In the supercritical case, the time $t^c$ at which the particles reach the upper wall is marked. These figures are schematic and do not distinguish between reflecting walls, for which the particles may touch the walls, and absorbing walls for which they cannot.}
    \label{fig:Global_picture}
  \end{figure}
%%%%%%%%%%%%%%%%%%%%%%%

We remark that the nonintersecting Brownian motions on an interval with periodic, reflecting, and absorbing boundary conditions are related to the 2D Yang--Mills theory on a sphere with $\mathrm{U}(n)$, $\mathrm{O}(n)$, and $\mathrm{Sp}(n)$ gauge groups, respectively \cite{Gross-Matytsin95, Forrester-Majumdar-Schehr11}. The earlier paper \cite{Liechty-Wang14-2} dealt mainly with probabilistic aspects of the nonintersecting Brownian motion model with periodic boundary conditions, but has found applications in the study of 2D Yang--Mills theory, see \cite{Levy-Maida15, Gorsky-Milekhin-Nechaev16}. Similarly the current paper deals primarily with  the limiting local correlations for the nonintersecting Brownian motions on an interval with absorbing and reflecting boundary conditions, but we hope that the method and results obtained in this paper can shed light on the mathematico-physical aspect of the models as well.
On the other hand, the symmetry of the model revealed by the Yang--Mills theory gives a hint of the universality class that our probabilistic models belong to, see the discussion in Section \ref{subsubsec:univ_Pearcey}.

A precursor of $\NIBMabs$ is the nonintersecting Brownian excursion model studied in \cite{Tracy-Widom07}, which is equivalent to our $\NIBMabs$ with the two absorbing walls placed at $0$ and $+\infty$, or by a change of scale, the $T \to 0_+$ limit of our $\NIBMabs$. 
%\textbf{to be filled.}

\subsection{Limiting correlation kernels}

\subsubsection{The extended Pearcey and tacnode kernels, and their odd and even parts}

In order to state our main results, we first present the explicit forms of the limiting correlation kernels that our nonintersecting Brownian motion models converge to. Hence we must define the Pearcey and tacnode processes.

The Pearcey process arises as a scaling limit when two separate groups of nonintersecting paths merge into a single group. It is a determinantal process and is thus defined by the (extended) Pearcey kernel \cite[Section 3]{Tracy-Widom06},
\begin{equation} \label{eq:Pearcey_kernel}
  K^{\Pearcey}_{s, t}(\xi, \eta) = \widetilde{K}^{\Pearcey}_{s, t}(\xi, \eta) - 1_{s<t}\phi_{s, t}(\xi, \eta),
\end{equation}
where 
\begin{equation} \label{eq:nonessential_Pearcey}
  \phi_{s, t}(\xi, \eta) =
    \frac{1}{\sqrt{2\pi(t - s)}} e^{-\frac{(\xi - \eta)^2}{2(t - s)}},
\end{equation}
and
\begin{equation} \label{eq:int_formula_Pearcey}
  \widetilde{K}^{\Pearcey}_{s, t}(\xi, \eta) = \frac{i}{(2\pi i)^2} \int_X dz \int_{\realR} dw \frac{e^{\frac{z^4}{4} + \frac{s z^2}{2} + i\xi z}}{e^{\frac{w^4}{4} + \frac{t w^2}{2} + i\eta w}} \frac{1}{z - w},
\end{equation}
where $X$ consists of four rays: one from $e^{\pi i/4} \cdot \infty$ to $0$, one from $e^{5\pi i/4} \cdot \infty$ to $0$, one from $0$ to $e^{3\pi i/4} \cdot \infty$, and one from $0$ to $e^{7\pi i/4} \cdot \infty$, see Figure \ref{fig:X}. Our definition of the Pearcey kernel is the same as that in \cite[Formula 1.2]{Adler-Orantin-van_Moerbeke10} up to a change of variables.

\begin{figure}[ht]
  \begin{minipage}[t]{0.3\linewidth}
    \centering
    \includegraphics{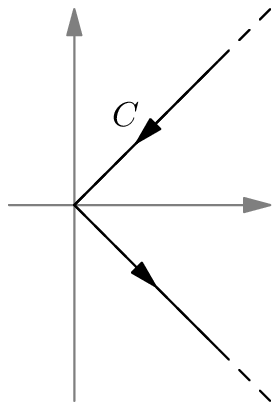}
    \caption{The shape of $C$.}
    \label{fig:C}
  \end{minipage}
  \begin{minipage}[t]{0.35\linewidth}
    \centering
    \includegraphics{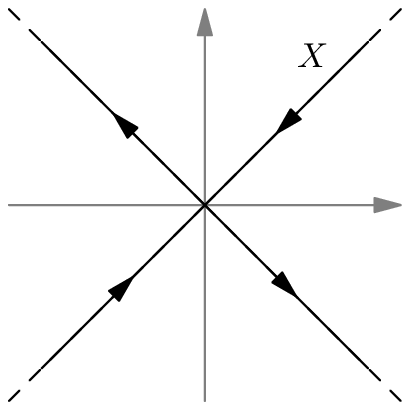}
    \caption{The shape of $X$.}
    \label{fig:X}
  \end{minipage}
  \hspace{\stretch{1}}
  \begin{minipage}[t]{0.35\linewidth}
    \centering
    \includegraphics{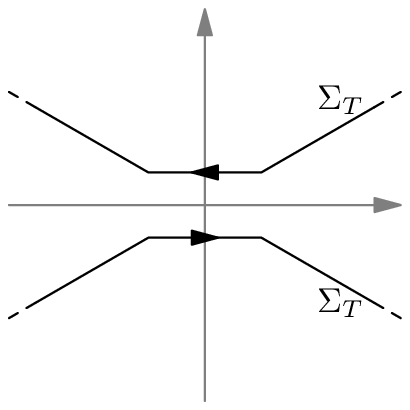}
    \caption{The shape of $\Sg_T$.}
    \label{fig:Sigma_T}
  \end{minipage}
\end{figure}

The tacnode process arises as a scaling limit when two separate groups of nonintersecting paths come together to meet at a single point in space-time, and then separate again. This process is also determinantal. It appears in several other models as the limiting process, and the correlation kernel has several equivalent definitions \cite{Delvaux-Kuijlaars-Zhang11, Johansson13, Adler-Ferrari-van_Moerbeke13, Ferrari-Veto12, Liechty-Wang14-2, Liechty-Wang16} with various generalities. Here we use the definition in terms of the Flaschka--Newell Lax pair for the Hastings--McLeod solution to the homogeneous \Painleve\ II equation, following \cite{Liechty-Wang14-2}. We only define the symmetric tacnode kernel, which corresponds to the case that the two groups of nonintersecting paths are of the same size. A straightforward generalization to the asymmetric form is given in \cite{Liechty-Wang16}. In order to define the kernel, we first must define some special functions which appear in the formula.

The homogeneous \Painleve\ II equation (PII) is the second order nonlinear ODE
\begin{equation}\label{cr2}
q''(s)=sq(s)+2q(s)^3\,,
\end{equation}
and the Hastings--McLeod solution \cite{Hastings-McLeod80} to PII
is the unique one that satisfies
\begin{equation}\label{cr3}
q(s)=\Ai(s)(1+o(1))\,, \qquad \textrm{as} \ s\to +\infty\,,
\end{equation}
where $\Ai(s)$ is the Airy function. Throughout this paper we let $q(s)$ be this particular solution to PII. The $2 \times 2$ matrix-valued differential equation
\begin{equation}\label{cr4}
  \frac{d}{d\z} \mathbf \Psi(\z;s) =
  \begin{pmatrix}
    -4i\z^2-i(s+2q(s)^2) & 4\z q(s)+2iq'(s) \\
    4\z q(s)-2iq'(s) & 4i\z^2 +i(s+2q(s)^2)
  \end{pmatrix}
  \mathbf \Psi(\z;s),
\end{equation}
was originally studied by Flaschka and Newell \cite{Flaschka-Newell80} as part of a Lax pair for PII, meaning that the compatibility of this equation with another differential equation given in \eqref{equiv2} implies that $q(s)$ solves PII. Throughout this paper we let $\mathbf{\Psi}(\zeta; s)$ be the solution to \eqref{cr4} that satisfies
\begin{equation}\label{cr5}
  \mathbf \Psi(\z; s)e^{i(\frac{4}{3} \z^3 +s \z)\sg_3} = I+O(\z^{-1})\,, \quad \z \to \pm \infty\, .
\end{equation}  
The asymptotics \eqref{cr5} extend into the sectors $-\pi/3< \arg \z < \pi/3$, and $2\pi/3 < \arg \z < 4\pi/3$, see e.g. \cite{Fokas-Its-Kapaev-Novokshenov06}.  Denote by $\Psi_{ij}(\z;s)$ the $(i,j)$ entry of the matrix $\mathbf \Psi(\z;s)$ defined in \eqref{cr4} and \eqref{cr5}.
It is convenient to also define the functions
\begin{equation}\label{tac18}
  f(u; s) :=
  \begin{cases}
    -\Psi_{12}(u;s) & \text{if $\Im u>0$}, \\
    \Psi_{11}(u;s) & \text{if $\Im u<0$}, 
  \end{cases}
  \qquad
  g(u,s) :=
  \begin{cases}
    -\Psi_{22}(u;s) & \text{if $\Im u>0$}, \\
    \Psi_{21}(u;s) & \text{if $\Im u<0$}.
  \end{cases}
\end{equation}
The extended tacnode kernel is now defined as
\begin{equation} \label{eq:tacnode_kernel}
  K^{\tac}_{s, t}(\xi, \eta; \sg) = \widetilde{K}^{\tac}_{s, t}(\xi, \eta; \sg) - \phi_{s, t}(\xi, \eta),
\end{equation}
where $\phi_{s, t}(\xi, \eta)$ is as in \eqref{eq:nonessential_Pearcey}, and
\begin{equation}\label{eq:essential_tacnode}
  \widetilde{K}^{\tac}_{s, t}(\xi, \eta; \sg):=\frac{1}{2\pi} \int_{\Sg_T} du \int_{\Sg_T} dv\, e^{\frac{s u^2}{2} - \frac{t v^2}{2}} e^{-i(u\xi - v\eta)} \frac{f(u;\sg)g(v;\sg)-g(u;\sg)f(v;\sg)}{2\pi i (u-v)}.
\end{equation}
Here $\Sg_T$ is a contour consisting of two pieces. One piece of $\Sg_T$ lies entirely above the real line, and goes from $e^{\pi i/6} \cdot \infty$ to $e^{5\pi i/6} \cdot \infty$.  The other piece lies entirely below the real line and goes from $e^{7\pi/6} \cdot \infty$ to $e^{11\pi/6} \cdot \infty$, see Figure \ref{fig:Sigma_T}. The convergence of the integrals in \eqref{eq:essential_tacnode} follows from the asymptotics \eqref{cr5}.

The symmetric tacnode process depends on one real parameter $\sg\in \realR$, whereas the Pearcey process contains no parameters.
If we view the tacnode process as the limit of two groups of particles in nonintersecting Brownian motions, the real parameter $\sigma$ which appears in $K^{\tac}$ measures the strength of interaction between the two groups. As $\sg\to -\infty$ the two groups become indistinguishable and the tacnode kernel has the sine kernel, which is the usual bulk scaling limit for nonintersecting paths, as a scaling limit. As $\sg\to +\infty$ the two groups of particles separate and there is no interaction between them \cite{Geudens-Zhang15, Girotti14a}.

%Both the extended Pearcey and tacnode kernels are symmetric functions of their spatial variables. 
We define the even versions of the Pearcey and tacnode kernels as
\begin{equation}\label{eq:def_even_part}
\begin{aligned}
  {K}^{\Pearcey, \even}_{s, t}(\xi, \eta) &:=  {K}^{\Pearcey}_{s, t}(\xi, \eta) +   {K}^{\Pearcey}_{s, t}(\xi, -\eta), \\
    {K}^{\tac, \even}_{s, t}(\xi, \eta; \sg)&:=  {K}^{\tac}_{s, t}(\xi, \eta; \sg)+  {K}^{\tac}_{s, t}(\xi, -\eta; \sg),
    \end{aligned}
    \end{equation}
and their odd versions as
\begin{equation}\label{eq:def_even_part}
\begin{aligned}
  {K}^{\Pearcey, \odd}_{s, t}(\xi, \eta) &:=  {K}^{\Pearcey}_{s, t}(\xi, \eta) -   {K}^{\Pearcey}_{s, t}(\xi, -\eta), \\
    {K}^{\tac, \odd}_{s, t}(\xi, \eta; \sg)&:=  {K}^{\tac}_{s, t}(\xi, \eta; \sg)- {K}^{\tac}_{s, t}(\xi, -\eta; \sg).
    \end{aligned}
    \end{equation}
Note that these are just the odd and even parts of the kernels with respect to the spatial variables, up to a factor of $2$. They describe symmetrized versions of the Pearcey and tacnode processes which we call the even (odd) Pearcey process and the even (odd) tacnode process. Similarly we refer to the kernels as the even (odd) Pearcey kernel and the even (odd) tacnode kernel. The even and odd Pearcey processes have appeared previously as limiting processes in the Plancherel growth models with $\mathrm{O}(\infty)$ symmetry \cite{Borodin-Kuan10, Kuan13}, and  with $\mathrm{Sp}(\infty)$ symmetry \cite{Cerenzia15}, respectively. In \cite{Borodin-Kuan10, Kuan13}, the kernel $K^{\Pearcey, \even}_{s, t}(\xi, \eta)$ appears under the name \emph{symmetric Pearcey kernel}, and in \cite{Cerenzia15} the kernel $K^{\Pearcey, \odd}_{s, t}(\xi, \eta)$ is simply referred to as \emph{Pearcey kernel}. We prefer to call them the even and odd Pearcey kernels because it clarifies the relation to the usual Pearcey kernel.

For convenience we write the following explicit formulas for the even and odd Pearcey kernels in terms of sine and cosine:
\begin{equation} \label{eq:Pearcey_kernel_sym}
\begin{aligned}
  K^{\Pearcey,\even}_{s, t}(\xi, \eta) &= \widetilde{K}^{\Pearcey, \even}_{s, t}(\xi, \eta) -  1_{s<t}(\phi_{s, t}(\xi, \eta) + \phi_{s, t}(\xi, -\eta)), \\
   K^{\Pearcey,\odd}_{s, t}(\xi, \eta) &= \widetilde{K}^{\Pearcey, \odd}_{s, t}(\xi, \eta) -  1_{s<t}(\phi_{s, t}(\xi, \eta) - \phi_{s, t}(\xi, -\eta)),
  \end{aligned}
\end{equation}
where $\phi_{s, t}(\xi, \eta)$ is defined in \eqref{eq:nonessential_Pearcey} and
\begin{equation} \label{eq:int_formula_Pearcey_even}
\begin{aligned}
  \widetilde{K}^{\Pearcey,\even}_{s, t}(\xi, \eta) &= \frac{i}{(2\pi i)^2} \int_X dz \int_{\realR} dw \frac{e^{\frac{z^4}{4} + \frac{s z^2}{2}}}{e^{\frac{w^4}{4} + \frac{t w^2}{2}}} \frac{2 e^{i\xi z}\cos(\eta w)}{z - w} \\
 &= \frac{1}{2\pi^2 i } \int_X dz \int_{\realR} dw \frac{e^{\frac{z^4}{4} + \frac{s z^2}{2}}}{e^{\frac{w^4}{4} + \frac{t w^2}{2}}} \frac{z\cos(\xi z)\cos(\eta w)}{z^2 - w^2},
    \end{aligned}
\end{equation}
and
\begin{equation} \label{eq:int_formula_Pearcey_odd}
\begin{aligned}
  \widetilde{K}^{\Pearcey,\odd}_{s, t}(\xi, \eta) &= \frac{i}{(2\pi i)^2} \int_X dz \int_{\realR} dw \frac{e^{\frac{z^4}{4} + \frac{s z^2}{2}}}{e^{\frac{w^4}{4} + \frac{t w^2}{2}}} \frac{2 ie^{i\xi z}\sin(\eta w)}{z - w} \\
   &= \frac{i}{2\pi^2} \int_X dz \int_{\realR} dw \frac{e^{\frac{z^4}{4} + \frac{s z^2}{2}}}{e^{\frac{w^4}{4} + \frac{t w^2}{2}}} \frac{z\sin(\xi z)\sin(\eta w)}{z^2 - w^2}.
      \end{aligned}
\end{equation}
The second lines of \eqref{eq:int_formula_Pearcey_even} and \eqref{eq:int_formula_Pearcey_odd} follow in a straightforward way from some symmetries of the integrand, and are in the same form as the symmetric Pearcey kernels given in \cite{Borodin-Kuan10} and \cite{Cerenzia15}, respectively.
Similarly the odd and even tacnode kernels are given as
\begin{equation} \label{eq:tacnode_kernel_sym}
\begin{aligned}
  K^{\tac, \even}_{\tau_i, \tau_j}(\xi,\eta; \sg) = {}& \widetilde{K}^{\tac, \even}_{\tau_i, \tau_j}(\xi,\eta; \sg) - 1_{s<t}(\phi_{s, t}(\xi, \eta) + \phi_{s, t}(\xi, -\eta)), \\
  K^{\tac, \odd}_{\tau_i, \tau_j}(\xi,\eta; \sg) = {}& \widetilde{K}^{\tac, \odd}_{\tau_i, \tau_j}(\xi,\eta; \sg) - 1_{s<t}(\phi_{s, t}(\xi, \eta) - \phi_{s, t}(\xi, \eta)),
\end{aligned}
\end{equation} 
where $\phi_{s, t}(\xi, \eta)$ is defined in \eqref{eq:nonessential_Pearcey}, $f$ and $g$ are defined in \eqref{tac18}, and
\begin{equation}
\widetilde{K}^{\tac, \even}_{s, t}(\xi,\eta; \sg)=\frac{1}{2\pi} \int_{\Sg_T}\,du\int_{\Sg_T}\, dv\, e^{\frac{su^2}{2}-\frac{tv^2}{2}} \frac{f(u; \sg)g(v; \sg)-g(u; \sg)f(v; \sg)}{2\pi i(u-v)} e^{-iu\xi} \cos(v\eta),
\end{equation}
and 
\begin{equation}
\widetilde{K}^{\tac, \odd}_{s, t}(\xi,\eta; \sg)=\frac{1}{2\pi} \int_{\Sg_T}\,du\int_{\Sg_T}\, dv\, e^{\frac{su^2}{2}-\frac{tv^2}{2}} \frac{f(u; \sg)g(v; \sg)-g(u; \sg)f(v; \sg)}{2\pi (u-v)} e^{-iu\xi} \sin(v\eta).
\end{equation}

\subsubsection{Main results}

The even (odd) Pearcey process and the even (odd) tacnode process appear as scaling limits in $\NIBMref$ ($\NIBMabs$). Namely, in the supercritical case $T>T_c$, the determinantal process defined by the kernel ${K}^{\Pearcey, \even}$ (resp. ${K}^{\Pearcey, \odd}$) appears as a scaling limit for $\NIBMref$  (resp. $\NIBMabs$) when space is scaled close to the upper wall $x=\pi$ and time is scaled close to $t^c$, the time at which the group of particles reaches the upper wall. Similarly, in the critical case $T=T_c=\pi^2/2$, the determinantal process defined by the kernel ${K}^{\tac, \even}$ (resp. ${K}^{\tac, \odd}$) appears as a scaling limit for $\NIBMref$  (resp. $\NIBMabs$) when space is scaled close to the upper wall $x=\pi$ and time is scaled close to $T/2$. 

The precise statement of these convergences is given the following theorem. In the statement of this theorem, we need the following notations: $T_c = \pi^2/2$ is the critical value of the total time; $t^c \in (0, T/2)$ depending on $T$ is the time when the limiting Pearcey process occurs if $T > T_c$; $d$ in part \ref{enu:thm:main_a} is a scaling parameter depending on $T$ and $t^c$ if $T > T_c$, while $d = 2^{-5/3} \pi$ in part \ref{enu:thm:main_b} has a similar role if $T$ is equal or close to $T_c$. The exact formulas for $t^c$ and $d$ in part \ref{enu:thm:main_a} will be given in Appendix \ref{sec:formulas_t^c_d}.

\begin{thm} \label{thm:main}
  Both \NIBMref\ and \NIBMabs\ are determinantal processes. Their multi-time correlation kernels, $K^{\reflect}_{t_i, t_j}(x, y; n, T)$ and $K^{\absorb}_{t_i, t_j}(x, y; n, T)$, have the following convergence properties:
  \begin{enumerate}[label=(\alph*)]
  \item \label{enu:thm:main_a}
    Assume $T > T_c=\pi^2/2$, and let $d$ be the constant that is specified in \eqref{eq:d_defn}. With the scalings
    \begin{equation}
      t_i = t^c + \frac{d^2}{2^{3/2} n^{1/2}} \tau_i, \quad t_j = t^c + \frac{d^2}{2^{3/2} n^{1/2}} \tau_j, \quad x = \pi - \frac{d}{(2n)^{3/4}}\xi, \quad y = \pi - \frac{d}{(2n)^{3/4}}\eta,
    \end{equation}
    the correlation kernels have the limits
    \begin{align}
      \lim_{n \to \infty} K^{\reflect}_{t_i, t_j}(x, y; n, T) \left\lvert \frac{dy}{d\eta} \right\rvert = {}&  K^{\Pearcey, \even}_{-\tau_j, -\tau_i}(\xi, \eta), \\
      \lim_{n \to \infty} K^{\absorb}_{t_i, t_j}(x, y; n, T) \left\lvert \frac{dy}{d\eta} \right\rvert = {}& K^{\Pearcey, \odd}_{-\tau_j, -\tau_i}(\xi, \eta).
    \end{align}
  \item  \label{enu:thm:main_b}
Fix $\sg\in \realR$, and let $T$ be scaled close to $T_c=\pi^2/2$ as 
    \begin{equation} \label{eq:T_scaling_tacnode}
     T = \frac{\pi^2}{2} \left( 1 - 2^{-\frac{2}{3}} \sigma (2n)^{-\frac{2}{3}} \right), \quad d = 2^{-5/3} \pi.
    \end{equation}
 With the scalings
    \begin{equation} \label{scaling_main_thm_crit}
      t_i = \frac{T}{2} + \frac{d^2}{2^{4/3} n^{1/3}} \tau_i, \quad t_j = \frac{T}{2} + \frac{d^2}{2^{4/3} n^{1/3}} \tau_j, \quad x = \pi - \frac{d}{(2n)^{2/3}} \xi, \quad y = \pi - \frac{d}{(2n)^{2/3}} \eta,
    \end{equation}
    the correlation kernels have the limits
    \begin{align}
      \lim_{n\to\infty} K^{\reflect}_{t_i, t_j}(x, y; n, T) \left\lvert \frac{dy}{d\eta} \right\rvert = {}& K^{\tac, \even}_{\tau_i, \tau_j}(\xi,\eta; \sg), \\
      \lim_{n\to\infty} K^{\absorb}_{t_i, t_j}(x, y; n, T) \left\lvert \frac{dy}{d\eta} \right\rvert = {}& K^{\tac, \odd}_{\tau_i, \tau_j}(\xi,\eta; \sg).
    \end{align}
  \end{enumerate}
\end{thm}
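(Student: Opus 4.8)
The plan is to derive, for each fixed $n$, an exact integral representation of the kernels $K^{\reflect}_{t_i,t_j}$ and $K^{\absorb}_{t_i,t_j}$ in terms of a family of discrete Gaussian orthogonal polynomials, and then to run a Riemann--Hilbert steepest-descent analysis in the two scaling regimes. For the first step I would start from the eigenfunction expansions $P^{\reflect}_{\sigma}(x,y;t)=\tfrac1\pi+\tfrac2\pi\sum_{k\ge1}e^{-k^2\sigma^2t/2}\cos kx\cos ky$ and $P^{\absorb}_{\sigma}(x,y;t)=\tfrac2\pi\sum_{k\ge1}e^{-k^2\sigma^2t/2}\sin kx\sin ky$, which follow from \eqref{eq:reflecting_tp}--\eqref{eq:absorbing_tp} by Poisson summation. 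Combined with the Karlin--McGregor formula for the nonintersecting paths and the confluent (packed) initial/terminal conditions of Section \ref{algebraic}, this realizes $\NIBMref$ and $\NIBMabs$ as determinantal processes whose ``melon'' structure is that of a biorthogonal ensemble. Since $P^{\reflect}$ (resp.\ $P^{\absorb}$) is even (resp.\ odd) in the endpoint variable, the confluent Vandermonde limit organizes the relevant functions into the span of $\{k^{2(i-1)}\cos kx\}_{i=1}^{n}$ (resp.\ $\{k^{2i-1}\sin kx\}_{i=1}^{n}$), and the associated Gram matrix is the moment matrix of the discrete Gaussian weight $e^{-k^2\sigma^2T/2}$ on $\intZ_{\ge 0}$ (resp.\ $\intZ_{\ge 1}$) in the variable $\lambda=k^2$; equivalently these are the even and odd sectors of the discrete Gaussian orthogonal polynomials of \cite{Liechty-Wang14-2}, the interval $[0,\pi]$ with a reflecting (resp.\ absorbing) wall being, up to rescaling, the quotient of the circle model by the reflection $x\mapsto-x$. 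The Eynard--Mehta theorem then yields the kernels, and resumming the finite sums via generating functions gives, for $\xxx\in\{\reflect,\absorb\}$, a representation of the schematic shape
\[
K^{\xxx}_{t_i,t_j}(x,y)= -\,1_{t_i<t_j}\bigl(\phi_{t_i,t_j}(x,y)\pm\phi_{t_i,t_j}(x,-y)\bigr)+\frac{1}{(2\pi i)^2}\int_{\Gamma_1}\!\int_{\Gamma_2}\frac{e^{nA_{t_i}(z)}}{e^{nA_{t_j}(w)}}\,\Pi_n(z,w)\,\bigl(e^{iwy}\pm e^{-iwy}\bigr)\,\frac{dz\,dw}{z-w},
\]
with the upper sign for $\xxx=\reflect$ and the lower sign for $\xxx=\absorb$; here $\Pi_n$ is built from the discrete Gaussian orthogonal polynomials and their norms, $A_t$ carries the large parameter, and $\phi_{t_i,t_j}$ denotes the one-step transition $P^{\xxx}_{\sigma}(x,y;t_j-t_i)$. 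The two image terms and the $e^{\pm iwy}$ pair are already the fingerprints of the even/odd structure in the statement.

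The technical core is the asymptotic analysis of these discrete Gaussian orthogonal polynomials, which is essentially the analysis already carried out in \cite{Liechty-Wang14-2}: interpolate the discrete weight to a meromorphic jump, set up the $2\times2$ Riemann--Hilbert problem, and construct the $g$-function from the equilibrium measure associated with the Gaussian weight under the upper constraint imposed by the lattice. The behavior of this equilibrium measure at the right edge of its support --- the point where space is scaled near the upper wall $x=\pi$ --- is governed by $T$: for $T>T_c$ the support reaches the constraint at the time $t^c$, where it acquires a cubic (Pearcey-type) singularity in space-time, requiring a Pearcey local parametrix; for $T=T_c$ it touches the constraint at a single point, producing the tacnode singularity and a local parametrix built from the Flaschka--Newell solution $\mathbf\Psi(\zeta;s)$ of \eqref{cr4}--\eqref{cr5} and the Hastings--McLeod solution of PII (the parameter $\sigma$ of \eqref{eq:T_scaling_tacnode} entering here through the detuning of $T$ from $T_c$). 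Because the wall folds the problem, only half of the full Pearcey/tacnode parametrix is present, which is the source of the even/odd rather than full limiting kernels.

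Inserting the large-$n$ asymptotics of the orthogonal polynomials into the representation above, applying the scalings in the theorem, and deforming the $z$- and $w$-contours onto steepest-descent paths --- which converge to $X$ in part \ref{enu:thm:main_a} and to $\Sg_T$ in part \ref{enu:thm:main_b} --- reduces the double integral by a saddle-point computation to $\widetilde K^{\Pearcey}$, resp.\ $\widetilde K^{\tac}$, evaluated at $\eta$ and at $-\eta$; rewriting $e^{iwy}\pm e^{-iwy}$ through $\cos wy$ and $\sin wy$ then assembles the combinations $K_{s,t}(\xi,\eta)\pm K_{s,t}(\xi,-\eta)$, that is, the even and odd kernels of \eqref{eq:Pearcey_kernel_sym} and \eqref{eq:tacnode_kernel_sym}. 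The two image terms carried from the first step rescale, via the $k=0$ and $k=-1$ terms of \eqref{eq:reflecting_tp}--\eqref{eq:absorbing_tp}, to $1_{s<t}(\phi_{s,t}(\xi,\eta)\pm\phi_{s,t}(\xi,-\eta))$, matching the Gaussian parts of those kernels; the $\lvert dy/d\eta\rvert$ factors are the usual Jacobians making the convergence coordinate-free; and in part \ref{enu:thm:main_a} the substitution $(\tau_i,\tau_j)\mapsto(-\tau_j,-\tau_i)$ appears because at $t^c<T/2$ the edge of the cloud reaches the wall while still expanding, so the natural Pearcey time runs opposite to $t$.

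The main obstacle is this middle step: pushing the steepest-descent analysis through uniformly all the way to the hard wall, in particular constructing and matching the tacnode parametrix in the critical regime and controlling the passage between the supercritical (Pearcey) and critical (tacnode) regimes as $T\to T_c$, and checking rigorously that the folded geometry produces exactly the even/odd parametrices --- equivalently, that the finite-$n$ even/odd decomposition persists in the limit. Most of this apparatus is available from \cite{Liechty-Wang14-2}, so the genuinely new work lies in the algebraic reduction of the first step (the confluent limit with its even/odd bookkeeping) and in carrying the $\cos$/$\sin$ structure cleanly through the asymptotics.
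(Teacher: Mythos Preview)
Your overall strategy is sound and would work, but it overlooks a shortcut that the paper exploits and that makes the asymptotic step essentially free. After deriving the orthogonal-polynomial representations (your first step, which matches the paper's Proposition~\ref{prop:op_formulas}), the paper does not rerun any Riemann--Hilbert analysis. Instead it proves an \emph{exact finite-$n$ identity} (Lemma~\ref{lem:kernels_relation}):
\begin{align*}
K^{\reflect}_{t_i,t_j}(x,y;n,T) &= K_{2t_i,2t_j}(x,y;2n,2T) + K_{2t_i,2t_j}(x,-y;2n,2T),\\
K^{\absorb}_{t_i,t_j}(x,y;n,T) &= K_{2t_i,2t_j}(x,y;2n,2T) - K_{2t_i,2t_j}(x,-y;2n,2T),
\end{align*}
where $K$ is the correlation kernel of $\NIBMT$ from \cite{Liechty-Wang14-2} with $2n$ particles and total time $2T$. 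This is precisely the quotient-by-reflection observation you mention in passing, but pushed to a kernel identity rather than a heuristic: the rescalings $p^T_{n,k}(x)=2^k p^{2T}_{2n,k}(x/2)$ and $h^T_{n,k}=2^{2k+1}h^{2T}_{2n,k}$ convert the sums over even (resp.\ odd) degrees in Proposition~\ref{prop:op_formulas} literally into the even (resp.\ odd) part in $y$ of the full circle kernel. Theorem~\ref{thm:main} then follows by inserting the scalings into both terms on the right and quoting the Pearcey/tacnode limits of $K$ already proved in \cite{Liechty-Wang14-2}; the even/odd limiting kernels appear automatically because the decomposition holds before any limit is taken.

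So the step you flag as the main obstacle --- carrying the $\cos$/$\sin$ structure through the steepest-descent analysis and checking that the folded geometry produces exactly the even/odd parametrices --- simply does not arise in the paper's argument: the even/odd decomposition commutes with $n\to\infty$ because it is an exact algebraic identity, not an asymptotic one. What the paper's route buys is a one-line reduction to known results, with no new parametrices to build or match; what your route would buy is a self-contained proof independent of \cite{Liechty-Wang14-2}, at the cost of reproducing the hardest part of that paper.
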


In this paper we consider only the limiting behavior of the top particles in the critical and supercritical phases. For the bottom particles, our method can show that they converge to the limiting Bessel process with parameter $-1/2$ for $\NIBMref$ and $1/2$ for $\NIBMabs$, in all phases, like the result in \cite{Tracy-Widom07}. The universal sine and Airy processes should also occur when we take limit at appropriate places. We omit further discussion on these limits.
% \subsection{Limiting correlation kernels}

\subsection{Hard-edge Pearcey process and relation to Pearcey process} \label{subsubsec:Pearcey_kernels}

In this and next subsections we discuss the relations among various limiting kernels. For Pearcey kernels (this subsection) we simply summarize known results, while for tacnode kernels (next subsection) some results presented here are new.

The even and odd Pearcey processes are special cases of a more general family of determinantal processes which depends on a real parameter $\al>-1$, and arises as a scaling limit in a model of nonintersecting (squared) Bessel paths. We refer to this family as the {\it hard-edge Pearcey process}. The single time version of this kernel for general $\al$ was first derived by Desrosiers and Forrester \cite{Desrosiers-Forrester08} in the context of random matrix theory, and by Kuijlaars, Mart\'inez-Finkelshtein, and Wielonsky \cite{Kuijlaars-Martinez_Finkelshtein-Wielonsky11} from nonintersecting squared Bessel paths. These two groups of authors gave slightly different formulations of the kernel and did not use the {\it hard-edge Pearcey} nomenclature.

The multi-time extended kernel was derived recently by Delvaux and \Veto\ \cite{Delvaux-Veto14}, also from nonintersecting squared Bessel paths with parameter $\al>-1$, and they call the limiting process the hard-edge Pearcey process. For general $\alpha$ the multi-time extended kernel is
\begin{equation} \label{eq:hard_Pearcey_defn}
  K^{\Pearcey, (\alpha)}_{s, t}(\xi, \eta) = \widetilde{K}^{\Pearcey, (\alpha)}_{s, t}(\xi, \eta) - \phi^{(\alpha)}_{s, t}(x, y) 1_{s < t},
\end{equation}
where
\begin{equation}
  \widetilde{K}^{\Pearcey, (\alpha)}_{s, t}(\xi, \eta) = \left( \frac{y}{x} \right)^{\frac{\alpha}{2}} \frac{2}{\pi i} \int_C dv \int^{\infty}_0 du \left( \frac{u}{v} \right)^{\alpha} \frac{uv}{v^2 - u^2} \frac{e^{\frac{v^4}{2} + sv^2}}{e^{\frac{u^4}{2} + tu^2}} J_{\alpha}(2\sqrt{y} u) J_{\alpha}(2\sqrt{x} v),
\end{equation}
and
\begin{equation}
  \phi^{(\alpha)}_{s, t}(x, y) = \frac{1}{t - s} \left( \frac{y}{x} \right)^{\frac{\alpha}{2}} e^{-\frac{x + y}{t - s}} I_{\alpha} \left( \frac{2\sqrt{xy}}{t - s} \right).
\end{equation}
Here $J_{\alpha}$ is the Bessel function of the first kind, $I_{\alpha}$ is the modified Bessel function of the first kind \cite{Abramowitz-Stegun64}, and the contour $C$ consists of two rays, one is from $e^{i\pi/4} \cdot \infty$ to $0$, and the other from $0$ to $e^{-i\pi/4} \cdot \infty$, as shown in Figure \ref{fig:C}.

The Bessel process reduces to Brownian motion on a half-line with a reflecting wall at the origin in the special case $\al=-1/2$, and is connected to Brownian motion on a a half-line with an absorbing wall at the origin in the special case $\al=1/2$, see e.g. \cite{Katori15}. Thus it is reasonable to guess that for $\al=\pm 1/2$, the hard-edge tacnode kernel reduces to the kernels $K^{\Pearcey, \odd}$ and $K^{\Pearcey, \even}$ which are scaling limits of nonintersecting paths in the presence of absorbing/reflecting walls.
 Using the explicit formulas of (modified) Bessel functions with $\alpha = \pm 1/2$, we see that in these cases the hard-edge Pearcey kernel reduces to
 \begin{subequations}\label{eq:HE_kernel_reduct}
\begin{align}
  \widetilde{K}^{\Pearcey, (1/2)}_{s, t}(\xi, \eta) = {}& \frac{2}{\pi^2 i} \frac{1}{\sqrt{x}} \int_C dv \int^{\infty}_0 du \frac{e^{\frac{v^4}{2} + sv^2}}{e^{\frac{u^4}{2} + tu^2}} \sin(2\sqrt{y} u) \sin(2\sqrt{x} v) \frac{u}{v^2 - u^2}, \\
  \widetilde{K}^{\Pearcey, (-1/2)}_{s, t}(\xi, \eta) = {}& \frac{2}{\pi^2 i} \frac{1}{\sqrt{y}} \int_C dv \int^{\infty}_0 du \frac{e^{\frac{v^4}{2} + sv^2}}{e^{\frac{u^4}{2} + tu^2}} \cos(2\sqrt{y} u) \cos(2\sqrt{x} v) \frac{v}{v^2 - u^2}, \\
  \phi^{(1/2)}_{s, t}(x, y) = {}& \frac{1}{2\sqrt{\pi}} \frac{1}{\sqrt{t - s}} \frac{1}{\sqrt{x}} \left( e^{-\frac{(\sqrt{x} - \sqrt{y})^2}{t - s}} - e^{-\frac{(\sqrt{x} + \sqrt{y})^2}{t - s}} \right), \\
  \phi^{(-1/2)}_{s, t}(x, y) = {}& \frac{1}{2\sqrt{\pi}} \frac{1}{\sqrt{t - s}} \frac{1}{\sqrt{y}} \left( e^{-\frac{(\sqrt{x} - \sqrt{y})^2}{t - s}} + e^{-\frac{(\sqrt{x} + \sqrt{y})^2}{t - s}} \right).
\end{align}
\end{subequations}

Comparing \eqref{eq:Pearcey_kernel_sym}--\eqref{eq:int_formula_Pearcey_odd} with \eqref{eq:HE_kernel_reduct}, it is straightforward to obtain the following proposition.

\begin{prop} \label{lem:Pearcey_relation}
  The hard-edge Pearcey kernel with $\alpha = \pm 1/2$ have the following relationship to the odd and even Pearcey kernels:
  \begin{align}
    K^{\Pearcey, \even}_{s, t}(\xi, \eta) = {}& \frac{\eta}{\sqrt{2}} K^{\Pearcey, (-1/2)}_{s/\sqrt{2}, t/\sqrt{2}}(2^{-\frac{3}{2}} \xi^2, 2^{-\frac{3}{2}} \eta^2), \label{eq:Pearcey_relation_ref} \\
    K^{\Pearcey, \odd}_{s, t}(\xi, \eta) = {}& \frac{\xi}{\sqrt{2}} K^{\Pearcey, (1/2)}_{s/\sqrt{2}, t/\sqrt{2}}(2^{-\frac{3}{2}} \xi^2, 2^{-\frac{3}{2}} \eta^2). \label{eq:Pearcey_relation_abs}
  \end{align}
\end{prop}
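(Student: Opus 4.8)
The plan is to prove both identities by directly matching the explicit integral representations on the two sides, exactly as the sentence just before the proposition already indicates; there is no conceptual ingredient beyond a change of variables together with the bookkeeping of constants. First I would reduce to the range $\xi,\eta>0$. Under $\xi\mapsto-\xi$ or $\eta\mapsto-\eta$ the left sides $K^{\Pearcey,\even}_{s,t}$ and $K^{\Pearcey,\odd}_{s,t}$ are respectively even and odd in each variable, and so are the right sides, since $K^{\Pearcey,(\pm1/2)}_{s/\sqrt2,t/\sqrt2}(2^{-3/2}\xi^2,2^{-3/2}\eta^2)$ depends on $\xi,\eta$ only through $\xi^2,\eta^2$ while the prefactors $\eta/\sqrt2$ and $\xi/\sqrt2$ carry the one odd factor. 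Consequently the square-root signs $\sqrt{2^{-3/2}\xi^2}=2^{-3/4}\lvert\xi\rvert$ occurring inside \eqref{eq:HE_kernel_reduct} combine consistently with those prefactors, and it suffices to establish the stated equalities for $\xi,\eta>0$, where $\sqrt{2^{-3/2}\xi^2}=2^{-3/4}\xi$ and $\sqrt{2^{-3/2}\eta^2}=2^{-3/4}\eta$.

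For the $\widetilde K$ parts I would insert $x=2^{-3/2}\xi^2$, $y=2^{-3/2}\eta^2$ and the times $s/\sqrt2,t/\sqrt2$ into the reduced hard-edge formulas \eqref{eq:HE_kernel_reduct}, and then rescale the integration variables by $v=2^{-1/4}z$, $u=2^{-1/4}w$. This is a positive dilation, so the rays making up the contour $C$ and the half-line $[0,\infty)$ are preserved and no contour deformation is needed. One then checks term by term that the integrand passes to that of \eqref{eq:int_formula_Pearcey_even} (resp. \eqref{eq:int_formula_Pearcey_odd}): the exponents obey $\tfrac{v^4}{2}+\tfrac{s}{\sqrt2}v^2\mapsto\tfrac{z^4}{4}+\tfrac{sz^2}{2}$ (and likewise in $u,w,t$); the Bessel arguments satisfy $2\sqrt x\,v\mapsto\xi z$ and $2\sqrt y\,u\mapsto\eta w$, so $\cos(2\sqrt x\,v)\mapsto\cos(\xi z)$ and $\sin(2\sqrt x\,v)\mapsto\sin(\xi z)$; and the rational factor $v/(v^2-u^2)$ (resp. $u/(v^2-u^2)$) picks up a power of $2$ and becomes $z/(z^2-w^2)$ (resp. $w/(z^2-w^2)$). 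Collecting the powers of $2$ contributed by $du\,dv=2^{-1/2}\,dz\,dw$, by the rational factor, by the Bessel-reduction prefactor $y^{-1/2}=2^{3/4}/\eta$ (resp. $x^{-1/2}=2^{3/4}/\xi$), and by the external factor $\eta/\sqrt2$ (resp. $\xi/\sqrt2$), one is left with $\tfrac{2}{\pi^2 i}$ times the integral, over $C\times[0,\infty)$, of the integrand of \eqref{eq:int_formula_Pearcey_even} (resp. \eqref{eq:int_formula_Pearcey_odd}). Folding the contours to $X\times\realR$ — using the evenness of that integrand in the $[0,\infty)$-variable together with the fact that $X$ is assembled from $C$ and its reflection, which is precisely the symmetrization used to obtain the second lines of \eqref{eq:int_formula_Pearcey_even}--\eqref{eq:int_formula_Pearcey_odd} from their first lines — converts $\tfrac{2}{\pi^2 i}\int_C\int_{[0,\infty)}$ into $\tfrac{1}{2\pi^2 i}\int_X\int_\realR$ (resp. $\tfrac{i}{2\pi^2}\int_X\int_\realR$), i.e. into $\widetilde K^{\Pearcey,\even}$ (resp. $\widetilde K^{\Pearcey,\odd}$). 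In the even case the numerators match directly; in the odd case the $w/(z^2-w^2)$ produced by the substitution and the $z/(z^2-w^2)$ displayed in \eqref{eq:int_formula_Pearcey_odd} are interchangeable under that same symmetrization.

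It remains to match the nonessential pieces. Here $1_{s/\sqrt2<t/\sqrt2}=1_{s<t}$, and a short computation with the Gaussians in \eqref{eq:nonessential_Pearcey} and \eqref{eq:HE_kernel_reduct} gives $\tfrac{\eta}{\sqrt2}\,\phi^{(-1/2)}_{s/\sqrt2,t/\sqrt2}(2^{-3/2}\xi^2,2^{-3/2}\eta^2)=\phi_{s,t}(\xi,\eta)+\phi_{s,t}(\xi,-\eta)$ and $\tfrac{\xi}{\sqrt2}\,\phi^{(1/2)}_{s/\sqrt2,t/\sqrt2}(2^{-3/2}\xi^2,2^{-3/2}\eta^2)=\phi_{s,t}(\xi,\eta)-\phi_{s,t}(\xi,-\eta)$: the point is that $\bigl(\sqrt{2^{-3/2}\xi^2}\mp\sqrt{2^{-3/2}\eta^2}\bigr)^2=2^{-3/2}(\xi\mp\eta)^2$, so dividing by $(t-s)/\sqrt2$ reproduces exactly the exponent $(\xi\mp\eta)^2/(2(t-s))$ of $\phi_{s,t}(\xi,\pm\eta)$, while the leftover scalars ($2^{1/4}/\sqrt{t-s}$ from the rescaled times, $2^{3/4}/\eta$ or $2^{3/4}/\xi$ from $1/\sqrt y$ or $1/\sqrt x$, and $\eta/\sqrt2$ or $\xi/\sqrt2$) collapse to $1/\sqrt{2\pi(t-s)}$. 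Combining this with the $\widetilde K$ computation proves the proposition. I do not expect a genuine obstacle — as the surrounding text says, it is a matter of comparing the displayed formulas — the only points demanding care being the arithmetic of the various powers of $2$ through the dilation, the square-root sign conventions (dispatched by the initial reduction to $\xi,\eta>0$), and, if anything is to be called the main difficulty, the interchangeability of the $z$- and $w$-numerators in $\widetilde K^{\Pearcey,\odd}$ together with the contour folding, neither of which is new since both are already contained in the passage from the first to the second lines of \eqref{eq:int_formula_Pearcey_even}--\eqref{eq:int_formula_Pearcey_odd}.
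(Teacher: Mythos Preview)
Your proposal is correct and follows exactly the approach the paper itself indicates: the paper's entire ``proof'' is the sentence ``Comparing \eqref{eq:Pearcey_kernel_sym}--\eqref{eq:int_formula_Pearcey_odd} with \eqref{eq:HE_kernel_reduct}, it is straightforward to obtain the following proposition,'' and your plan is precisely that straightforward comparison carried out via the dilation $v=2^{-1/4}z$, $u=2^{-1/4}w$ together with the contour unfolding $C\times[0,\infty)\to X\times\realR$. Your flagging of the $z$-versus-$w$ numerator in the odd case is apt (it is the only point requiring a second look, and is indeed handled by the same symmetrization that relates the two lines of \eqref{eq:int_formula_Pearcey_odd}); otherwise there is nothing to add.
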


\subsection{Hard-edge tacnode process and relation to tacnode process}

The hard-edge tacnode process appears as a scaling limit of the lowest particles in nonintersecting (squared) Bessel processes when they just touch a hard-edge barrier \cite{Delvaux13}. As with the hard-edge Pearcey process, it is a determinantal process parametrized by the Bessel parameter $\alpha>-1$. Delvaux in \cite{Delvaux13} obtained a formula for a single-time limiting kernel depending on the parameter $\alpha>-1$, which we denote by $K^{\tac, (\al)}(x,y; s, \tau)$. His formulation is in terms of a certain $4\times 4$ Lax pair solution to the inhomogeneous Painlev\'{e} II equation, which is the same as \eqref{cr2} but with a additive constant term,
\begin{equation}\label{PII}
  q''(\sg)=2q(\sg)^3 + \zeta q(\sg) -\nu,
\end{equation}
where $\nu$ is a fixed constant. The particular solution to \eqref{PII} which appears in the Delvaux's tacnode kernel is also called the \emph{Hastings--McLeod} solution, and is defined to be the one which satisfies
\begin{equation} \label{eq:PII_BC}
  q(\sg) \sim \nu/ \sg, \quad \text{as $\sg \to +\infty$}, \quad q(\sg) \sim \sqrt{\frac{-\sg}{2}} \quad \text{as $\sg \to -\infty$}.
\end{equation}
The Bessel parameter $\al$ in the hard-edge tacnode kernel and the parameter $\nu$ in the PII equation are related by $\nu = \alpha+1/2$. Since the explicit formula for the hard-edge tacnode kernel is complicated, we relegate it to Section \ref{sec:kernels}. Currently the multi-time extended kernel for general $\alpha$ is not available. For integer $\alpha\ge 0$, a multi-time extended kernel was later obtained by Delvaux and \Veto\ \cite{Delvaux-Veto14} using a different approach to the same model. In this paper we are not going to use their extended kernel, so we omit the explicit formula, but only remark that their extended kernel is increasingly complicated as the integer $\alpha$ grows larger. % In the current paper we obtain a complimentary result, namely a multi-time extended version of the hard-edge tacnode kernel for the special cases $\alpha=\pm 1/2$. These are once again symmetrized versions of the usual tacnode kernel, so we first define that

Inspired by Proposition \ref{lem:Pearcey_relation}, it is natural to conjecture that $K^{\tac, \even}_{\tau_i, \tau_j}(\xi,\eta; \sg)$ and $K^{\tac, \odd}_{\tau_i, \tau_j}(\xi,\eta; \sg)$ are the correlation kernels for the hard-edge tacnode process with $\alpha = -1/2$ and $1/2$ respectively. However, the multi-time correlation kernel of the hard-edge tacnode process is not in literature, so we can only state a modest result on the one-time correlation kernel.
\begin{prop} \label{lem:tacnode_relation}
  The odd and even tacnode kernels have the following relations to the and the hard-edge tacnode kernel with $\alpha = \pm 1/2$:
  \begin{align}
    K^{\tac, (-1/2)}(x,y; s, \tau) = {}& 2^{5/3} \widetilde{K}^{\tac, \even}_{t, t}(2^{2/3}x,2^{2/3}y; \sg), \label{eq:ref_tac_relation} \\
    K^{\tac, (1/2)}(x,y; s, \tau) = {}& 2^{5/3} \widetilde{K}^{\tac, \odd}_{t, t}(2^{2/3}x,2^{2/3}y; \sg), \label{eq:abs_tac_relation}
  \end{align}
  where
  \begin{equation} \label{eq:relation_t_sigma_s_tau}
    t=2^{4/3} \tau, \quad \sg=2^{5/3} s - 2^{2/3}\tau^2.
  \end{equation}
\end{prop}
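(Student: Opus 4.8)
The plan is to reduce both sides of \eqref{eq:ref_tac_relation}--\eqref{eq:abs_tac_relation} to double-contour integrals and to match the integrands after an explicit change of variables. First I would recall from Section~\ref{sec:kernels} Delvaux's formula \cite{Delvaux13} for the single-time hard-edge tacnode kernel $K^{\tac,(\al)}(x,y;s,\tau)$, whose integrand is assembled from Bessel functions $J_\al$ together with entries of the $4\times 4$ Lax pair solution for the inhomogeneous PII equation \eqref{PII} with $\nu = \al+\tfrac12$ and Hastings--McLeod data \eqref{eq:PII_BC}. Specializing to $\al = \pm\tfrac12$ has two effects that drive the whole argument: the Bessel functions $J_{\pm 1/2}$ collapse to elementary $\sin$ and $\cos$ functions, which reproduces the $\cos(v\eta)$ and $\sin(v\eta)$ factors in $\widetilde{K}^{\tac,\even}$ and $\widetilde{K}^{\tac,\odd}$ (cf.\ \eqref{eq:essential_tacnode}); and $\nu$ becomes $0$ or $1$, the two values at which the $4\times 4$ Lax pair is tractable.

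The core step is to identify, for $\nu=0$ (the case $\al=-\tfrac12$), Delvaux's $4\times 4$ Lax pair data with the $2\times 2$ Flaschka--Newell solution $\mathbf\Psi(\z;s)$ of \eqref{cr4}--\eqref{cr5}. For $\nu = 0$ the inhomogeneous PII \eqref{PII} is exactly the homogeneous PII \eqref{cr2}, and the boundary conditions \eqref{eq:PII_BC} at $\nu=0$ are consistent with \eqref{cr3}, so Delvaux's $q$ coincides with the Hastings--McLeod solution used throughout the paper. One then checks that at $\nu=0$ the $4\times 4$ Lax pair decouples, via the reflection symmetry it enjoys there combined with a constant conjugation, into two copies of the $2\times 2$ system \eqref{cr4}; comparing the normalization \eqref{cr5} at $\z=\infty$ with Delvaux's normalization pins down the entries appearing in his kernel as precisely $f(\cdot;\sg)$ and $g(\cdot;\sg)$ from \eqref{tac18}. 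For $\nu=1$ (the case $\al=\tfrac12$) the equation is genuinely inhomogeneous, and here I would use the Schlesinger transformation for the $4\times 4$ Lax pair constructed in this paper, which relates the $\nu=1$ Lax pair solution to the $\nu=0$ one while preserving the Hastings--McLeod solution. Propagating its explicit rational-in-$\z$ prefactors through the double integral converts the $\al=-\tfrac12$ identity into the $\al=\tfrac12$ identity and simultaneously accounts for the passage from the $\cos$-structure to the $\sin$-structure.

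The last step is the change of variables \eqref{eq:relation_t_sigma_s_tau}, $t=2^{4/3}\tau$ and $\sg = 2^{5/3}s - 2^{2/3}\tau^2$, together with the spatial rescaling $x\mapsto 2^{2/3}x$, $y\mapsto 2^{2/3}y$ and a matching deformation of Delvaux's integration contours onto $\Sg_T$. The quadratic term $-2^{2/3}\tau^2$ in the definition of $\sg$ is exactly what is needed so that Delvaux's quartic-and-quadratic exponent collapses, in the single-time specialization $s=\tau$-type reduction, onto the exponent $\tfrac{su^2}{2}-\tfrac{tv^2}{2}$ in \eqref{eq:essential_tacnode}; the Jacobian of the substitution together with the $\sqrt{\cdot}$ factors produced by the $J_{\pm 1/2}$ reductions accounts for the overall constant $2^{5/3}$ and for the $(y/x)^{\pm 1/2}$-type prefactors in Delvaux's formula. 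Matching integrands pointwise then yields \eqref{eq:ref_tac_relation} and \eqref{eq:abs_tac_relation}.

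I expect the main obstacle to be the Lax-pair identification in the middle paragraph: establishing rigorously that Delvaux's $4\times 4$ Riemann--Hilbert/Lax data at $\nu=0$ reduces to the specific solution $\mathbf\Psi$ of \eqref{cr4}--\eqref{cr5}, with the correct reading of the relevant entries as $f$ and $g$, and then carrying the Schlesinger transformation through to $\nu=1$ without normalization or sign errors. The Bessel-to-trigonometric reductions and the change of variables are routine bookkeeping once this correspondence is fixed, and it is this correspondence---together with the new Schlesinger transformation---that constitutes the genuinely substantive content of the proposition. (As in \cite{Liechty-Wang14-2, Liechty-Wang16}, the single-time restriction makes the $\phi$-type terms drop out, so only the ``essential'' parts $\widetilde{K}$ need to be matched.)
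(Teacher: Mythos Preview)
Your proposal rests on a misreading of Delvaux's formula for the hard-edge tacnode kernel. The definition used in Section~\ref{sec:kernels}, equation~\eqref{eq:Delvaux_kernel}, contains no Bessel functions and no double contour integral with a ``quartic-and-quadratic exponent'': it is a bilinear expression in the entries of the $4\times 4$ Lax pair solution $M_\nu(z;s,\tau)$ only, evaluated at $z=x$ and $z=y$. You have imported the structure of the hard-edge \emph{Pearcey} kernel \eqref{eq:hard_Pearcey_defn}, where the $J_{\pm 1/2}\to\sin,\cos$ collapse is indeed what drives Proposition~\ref{lem:Pearcey_relation}. That mechanism has no counterpart here, and the $\cos(v\eta)$, $\sin(v\eta)$ factors in $\widetilde K^{\tac,\even/\odd}$ arise from symmetrizing $e^{\pm iv\eta}$ in \eqref{eq:essential_tacnode}, not from Bessel reductions. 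Likewise, the $4\times4$ system at $\nu=0$ does \emph{not} decouple into two copies of the $2\times2$ Flaschka--Newell system by a constant conjugation; the link is the integral representation \eqref{eq:n0_formulas} from \cite{Liechty-Wang16}, in which the entries $n_0(z;s,\tau)_i$ are \emph{contour integrals} of $f(\zeta;\sg)$ and $g(\zeta;\sg)$ against a Gaussian-in-$\zeta$ weight.

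The paper's actual route is quite different from direct pointwise matching of integrands. It first shows (Proposition~\ref{prop:triple_int}) that $\widetilde K^{\tac,\even/\odd}$ has a rank-one $\sg$-derivative, yielding a triple-integral representation \eqref{rank7a}--\eqref{rank8a}. Independently, Proposition~\ref{prop:derivative_of_tac} shows that $K^{\tac,(\al)}$ has a rank-one $s$-derivative, with factor $F_\nu$ defined in \eqref{eq:def_Fnu}. The Schlesinger transformation (Proposition~\ref{Schlesinger}) is then used not merely to ``propagate'' a known identity from $\nu=0$ to $\nu=1$, but to express $F_1$ as a combination \eqref{eq_Fnuplus1} of entries of $M_0$; combined with \eqref{eq:n0_formulas} this gives the explicit contour-integral formulas \eqref{eq:integral_F_0}--\eqref{eq:integral_F_1} for $F_0$ and $F_1$. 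One still needs the vanishing $K^{\tac,(\al)}(x,y;s,\tau)\to 0$ as $s\to+\infty$ (Conjecture~\ref{conj:vanishing}), which the paper proves separately for half-integer $\al$ using the Airy-resolvent formulas of \cite{Kuijlaars14}. Only then do the two triple-integral representations match after the change of variables \eqref{eq:relation_t_sigma_s_tau}. Your outline misses both rank-one derivative steps and the vanishing property, and the ``bookkeeping'' you regard as routine is based on a formula shape that does not exist.
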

The proof of this proposition, especially the proof of \eqref{eq:abs_tac_relation}, is much more involved than the proof of Proposition \ref{lem:Pearcey_relation}, and it is given in Section \ref{sec:kernels}. Theorem \ref{thm:main}\ref{enu:thm:main_b} together with Proposition \ref{lem:tacnode_relation} shows that the \NIBMref\ and \NIBMabs\ converge to hard-edge tacnode process with parameters $\mp 1/2$ respectively, at the one-time correlation level. We conjecture that the convergence holds for multi-time correlations as well.

\subsubsection{Hard-edge tacnode kernel with half-integer $\alpha$ by Schlesinger transformation}

As we mentioned above, the hard-edge tacnode kernels with integer $\alpha$ are special in that the multi-time extended version of the kernel is available in the literature. They are also special in that they have Airy resolvent formulas that do not generalize to non-integer $\alpha$ in a direct way \cite{Delvaux-Veto14}. On the other hand, from Proposition \ref{lem:tacnode_relation} and formulas \eqref{eq:tacnode_kernel} and \eqref{eq:essential_tacnode}, we see that the hard-edge tacnode kernels with half-integer $\alpha$ are also special in that they, at least the first two in the sequence of infinitely many, can be expressed by the Lax pair associated to the Hastings--McLeod to the homogeneous ($\nu = 0$) PII equation. Actually Proposition \ref{lem:tacnode_relation} is not incidental, but gives the first two cases of a general result.
\begin{thm} \label{thm:half-integer}
  For any $\alpha = k - 1/2$ with $k = 0, 1, 2, \dotsc$, the one-time hard-edge tacnode kernel $K^{\tac, (\alpha)}(x, y; s, \tau)$ can be expressed in the form
  \begin{equation}
    K^{\tac, (k - 1/2)}(x, y; s, \tau) = \frac{1}{\pi} \int_s^\infty F_k(x; \widetilde{s}, \tau) F_k(y; \widetilde{s}, -\tau)\,d\widetilde{s},
  \end{equation}
  where the function $F_{\nu}(x; s, \tau)$ is defined in \eqref{eq:def_Fnu} for general $\nu > -1$ and inductively by \eqref{eq:integral_F_0} and \eqref{eq:inductive_F_nu+k} for integer-valued $\nu$. For positive integer $k$, $F_k(x; s, \tau)$ is expressed as a contour integral where the integrand is a linear combination of the entries of $\mathbf \Psi(\z; s)$ and $\z \mathbf \Psi(\z; s)$, where $\mathbf \Psi(\z; s)$ is defined by the Lax pair \eqref{cr4} and \eqref{cr5} associated to the Hastings--McLeod solution of the homogeneous ($\nu = 0$) PII equation. The coefficients are polynomials in $q_0(\sg), q_1(\sg), \dots, q_{k - 1}(\sg)$ and $q_0'(\sg), q_1'(\sg), \dots, q_{k - 1}'(\sg)$, where $q_\nu(\sg)$ is the Hastings--McLeod solution to the inhomogeneous PII equation \eqref{PII}, and $\sg=2^{2/3}(2s-\tau^2)$.
\end{thm}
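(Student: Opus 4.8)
The plan is to prove the theorem by induction on $k$, the engine being a Schlesinger (Darboux--\Backlund) transformation that raises the parameter $\nu=\alpha+1/2$ by one unit in Delvaux's $4\times 4$ Lax pair while preserving the Hastings--McLeod normalization \eqref{eq:PII_BC}. The first, non-inductive step is to recast the one-time hard-edge tacnode kernel $K^{\tac,(\alpha)}(x,y;s,\tau)$ into an \emph{integrable} (rank-one) form. Starting from Delvaux's contour-integral formula (recalled in Section \ref{sec:kernels}), I would differentiate in the parameter $s$, show that $\partial_s K^{\tac,(\alpha)}(x,y;s,\tau)$ collapses to $-\tfrac{1}{\pi}F_\nu(x;s,\tau)\,F_\nu(y;s,-\tau)$ for a function $F_\nu$ assembled from the relevant entries of the $4\times 4$ Lax solution, and then integrate back, using that $K^{\tac,(\alpha)}(x,y;s,\tau)\to 0$ as $s\to+\infty$ (the hard edge recedes and the Lax data trivializes, since the Hastings--McLeod solution of \eqref{PII} decays by \eqref{eq:PII_BC}). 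This produces $K^{\tac,(k-1/2)}(x,y;s,\tau)=\tfrac{1}{\pi}\int_s^\infty F_k(x;\tilde s,\tau)F_k(y;\tilde s,-\tau)\,d\tilde s$ and reduces the theorem to an explicit description of $F_k$.

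For the base case $k=0$ (so $\alpha=-1/2$, $\nu=0$), the inhomogeneous PII equation \eqref{PII} degenerates to the homogeneous equation \eqref{cr2}, and its Hastings--McLeod solution \eqref{eq:PII_BC} coincides with the solution $q$ of \eqref{cr2}--\eqref{cr3}. I would show that in this degenerate case Delvaux's $4\times 4$ Lax system splits, so that $F_0$ can be re-expressed as a contour integral in the entries of the Flaschka--Newell solution $\mathbf\Psi(\zeta;s)$ of \eqref{cr4}--\eqref{cr5}; this is the same mechanism that underlies Proposition \ref{lem:tacnode_relation} and the relations \eqref{eq:ref_tac_relation}--\eqref{eq:abs_tac_relation}, now carried out at the level of $F_0$, and it also fixes the dyadic normalization constants, including $\sigma=2^{2/3}(2s-\tau^2)$.

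The inductive step is the core of the argument. I would construct an explicit gauge factor $R_\nu(\zeta;s)$, rational of degree one in $\zeta$ and with entries built from $q_\nu(\sigma)$ and $q_\nu'(\sigma)$, such that $\mathbf\Psi^{(\nu+1)}=R_\nu\,\mathbf\Psi^{(\nu)}$ solves the $4\times 4$ Lax system at parameter $\nu+1$; the compatibility check amounts to the classical \Backlund\ transformation $\nu\mapsto\nu+1$ for PII, but it must be supplemented by an asymptotic analysis in the relevant Stokes sectors verifying that $R_\nu\mathbf\Psi^{(\nu)}$ still carries exactly the normalization that selects the Hastings--McLeod solution. Iterating $R_\nu$ for $\nu=0,1,\dots,k-1$ then transforms $F_0$ into $F_k$. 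A priori each step raises the $\zeta$-degree of the integrand by one, but by using the $\zeta$-equation \eqref{cr4} (together with the $s$-equation of the Lax pair) to trade $\zeta^2\mathbf\Psi$ against $\zeta\mathbf\Psi$, $\mathbf\Psi$ and $\partial_\zeta\mathbf\Psi$ --- the last integrating to a vanishing boundary term in the contour integral --- one sees inductively that $F_k$ remains a contour integral whose integrand is a linear combination of the entries of $\mathbf\Psi(\zeta;s)$ and $\zeta\mathbf\Psi(\zeta;s)$, with coefficients polynomial in $q_0(\sigma),\dots,q_{k-1}(\sigma)$ and $q_0'(\sigma),\dots,q_{k-1}'(\sigma)$. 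Checking that each Schlesinger step also carries the $\tfrac{1}{\pi}\int_s^\infty F\cdot F$ representation to the one at $\nu+1$ closes the induction.

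The main obstacle is the construction in the inductive step, and specifically the preservation of the Hastings--McLeod solution under $R_\nu$: \Backlund\ transformations of PII move freely within a large family of solutions, and pinning down that the particular rational gauge factor one writes actually implements the map between the Hastings--McLeod solutions at consecutive integer $\nu$ requires carefully matching the jump and asymptotic data of the $4\times 4$ Riemann--Hilbert problem of \cite{Delvaux13} before and after the transformation, in a non-standard $4\times 4$ setting. A secondary, more computational difficulty is controlling the $\zeta$-degree growth through the iteration so that $F_k$ ends up linear in $\zeta$ with polynomial coefficients of exactly the stated shape; and one should not overlook the non-inductive first step, which needs enough control of Delvaux's kernel to extract the rank-one factorization $F_\nu$ in the first place.
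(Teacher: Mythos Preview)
Your high-level plan matches the paper's: prove the rank-one identity $\partial_s K^{\tac,(\alpha)}=-\tfrac{1}{\pi}F_\nu(x;s,\tau)F_\nu(y;s,-\tau)$ for general $\nu$ (the paper's Proposition \ref{prop:derivative_of_tac}), integrate from $s$ to $\infty$ using a vanishing lemma, and use a Schlesinger transformation to reduce $F_k$ to the $\nu=0$ data, which is then linked to the $2\times 2$ Flaschka--Newell solution $\mathbf\Psi$. However, the mechanics you describe would not work as written, because you have placed the Schlesinger transformation on the wrong Lax pair. The paper's Schlesinger step (Proposition \ref{Schlesinger}) acts on Delvaux's $4\times 4$ matrix $M_\nu(z;s,\tau)$ in the spectral variable $z$ (the same $z$ that becomes $x$ or $y$ in the kernel), via $M_{\nu+1}(z)=(I+\tfrac{\alpha_\nu}{z}R_\nu)\Sigma M_\nu(z)\Sigma$ with a \emph{constant} $4\times 4$ matrix $R_\nu$; it does not touch the Flaschka--Newell variable $\zeta$ at all. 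Iterating gives $F_k$ as a linear combination of the four quantities $n_0(z;s,\tau)_i=(M_0)_{i1}+(M_0)_{i3}$, with coefficients rational in $z$ and polynomial in the $q_j,q_j'$ (this is \eqref{eq:inductive_F_nu+k}). The passage to $\mathbf\Psi$ then happens in one stroke at $\nu=0$, not inductively: the paper imports from \cite{Liechty-Wang16} the explicit contour-integral formulas \eqref{eq:n0_formulas}, in which each $n_0(z)_i$ is \emph{already} an integral over $\zeta\in\Sigma_T$ with integrand at most linear in $\zeta$ times entries of $\mathbf\Psi(\zeta;\sigma)$. So there is no $\zeta$-degree growth to control, and your proposed use of \eqref{cr4} to reduce $\zeta^2\mathbf\Psi$ is unnecessary (and would not arise). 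Relatedly, the $\nu=0$ link between $M_0$ and $\mathbf\Psi$ is not a ``splitting'' of the $4\times 4$ system; it is the nontrivial integral representation \eqref{eq:n0_formulas} established in \cite{Liechty-Wang16}.

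Two further points you should not underestimate. First, the verification of \eqref{Schles_def} is an algebraic computation in the $4\times 4$ setting that hinges on the \Backlund\ identities \eqref{eq:Backlund_q} for the Hastings--McLeod family; the Hastings--McLeod normalization is preserved simply because $(I+\tfrac{\alpha_\nu}{z}R_\nu)\Sigma\,\cdot\,\Sigma$ does not disturb the leading asymptotics \eqref{eq:asy_M_nu}, so no separate Stokes-sector analysis is needed. Second, the vanishing $K^{\tac,(\alpha)}\to 0$ as $s\to+\infty$ is not a soft consequence of ``the hard edge receding'': the paper proves it for half-integer $\alpha$ by expressing everything through $n_0(z;s,\tau)_i$ and then invoking the Airy-resolvent formulas of \cite{Kuijlaars14} to get super-exponential decay (Section \ref{sec:kernels}, proof of Conjecture \ref{conj:vanishing}); you will need an argument of that type.
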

The explicit formulas for $F_k(x; s, \tau)$ for $k = 0, 1, 2\dots$  are given in \eqref{eq:integral_F_0}, \eqref{eq:integral_F_1}, and \eqref{eq:integral_F_2}. The formulas become more complicated and less useful as $k$ increases. We remark that since the Hastings--McLeod solution to the PII equation with integer-valued $\nu$ can be expressed by the Hastings--McLeod solution of the homogeneous ($\nu = 0$) PII equation via the \Backlund\ transformation \eqref{eq:Backlund_q}, $F_k(x; s, \tau)$ can be expressed purely in terms of the Hastings--McLeod solution to the homogeneous PII equation, and the associated Lax pair \eqref{cr4} and \eqref{cr5}. To be precise, the statement that the coefficients of the entries of $\mathbf \Psi(\z; s)$ and $\z \mathbf \Psi(\z; s)$ in the integrand are polynomial in $q_0(\sg), q_1(\sg), \dots, q_{k - 1}(\sg)$ and $q_0'(\sg), q_1'(\sg), \dots, q_{k - 1}'(\sg)$ may be replaced with the statement that the coefficients are rational functions of $q_0(\sg)$ and $q_0'(\sg)$. See Sections \ref{sec:M_nu_defn} and \ref{sec:kernels} for detail.

The key ingredient of the proof of Theorem \ref{thm:half-integer} and \eqref{eq:abs_tac_relation} in Proposition \ref{lem:tacnode_relation} is the Schlesinger transformation of the $4 \times 4$ Lax pair discovered in \cite{Delvaux13a}. It is not surprising for experts on integrable differential equations that Lax pairs associated to PII equation with parameters $\nu$ and $\nu+1$ are related by a e Schlesinger transformation, just as the Painlev\'{e} functions themselves are related by an explicit \Backlund\ transformation \eqref{eq:Backlund_q}, but the explicit construction for the $4 \times 4$ Lax pair is new, see Section \ref{sec:M_nu_defn}.

For $\al=\pm 1/2$ the multi-time version of Theorem \ref{thm:half-integer} is given in an explicit form in Proposition \ref{prop:triple_int}. From results in Proposition \ref{lem:tacnode_relation} and Theorem \ref{thm:half-integer}, we can formally conjecture the generalization of Theorem \ref{thm:half-integer} to the multi-time extended kernel for the hard-edge tacnode kernel with general half-integer $\alpha$. But since the extended kernel is still missing, we do not pursue it in the current paper.

% The result of one-time correlation function suggests that the kernels $K^{\tac, \reflect}_{\tau_i, \tau_j}(\xi,\eta; \sg)$ and $K^{\tac, \absorb}_{\tau_i, \tau_j}(\xi,\eta; \sg)$ yields the multi-time correlation kernel for the hard-edge tacnode process with $\alpha = \mp 1/2$, that is, the limiting process for 

%
%In the models $\NIBMref$ and $\NIBMabs$, the global picture is as follows. Since all particles begin and end at zero, at which there is an absorbing or reflecting wall, the group the Brownian paths remains close to this lower wall for all $t\in (0,T)$. If the total time is small, then the paths do not have time to approach the upper wall at $x=\pi$, and so its effect is not felt. We refer to this situation as the subcritical case. On the other hand, if the total time $T$ is big enough, then the group of Brownian paths has time to spread across the entire interval $(0,\pi)$, and so the upper wall at $x=\pi$ is felt. We refer to this situation as the supercritical case. There is a critical time $T_c$ which separates the subcritical and supercritical cases. In the paper \cite{Liechty-Wang14-2} it was established that this critical time is $T_c=\pi^2$.

%For the convenience of the readers, we state the algebraic and asymptotic results in \cite{Liechty-Wang14-2} that are related to our paper below. 

\subsection{Universality}

The (usual) Pearcey and tacnode processes occur as limiting processes in various models, and have established their statuses as universal limiting processes. The Pearcey kernel originally appeared in the papers \cite{Brezin-Hikami98a, Brezin-Hikami98} in the context of random matrix theory, see also \cite{Hachem-Hardy-Najim16, Mo10, Bleher-Kuijlaars07}, but also appears in models of nonintersecting paths \cite{Adler-Orantin-van_Moerbeke10,  Tracy-Widom06, Liechty-Wang14-2}, random growth models \cite{Okounkov-Reshetikhin07}, and random polymers \cite{Adler-van_Moerbeke-Wang11}. The tacnode kernel is newer so its scope in the literature is somewhat more limited, but it appears in various models of nonintersecting paths \cite{Johansson13, Ferrari-Veto12, Adler-Ferrari-van_Moerbeke13, Delvaux-Kuijlaars-Zhang11, Liechty-Wang14-2} and random tilings \cite{Adler-Johansson-van_Moerbeke14}. Here we comment briefly on the universal character of the hard-edge versions of the Pearcey and tacnode processes.

\subsubsection{Universality for hard-edge Pearcey process} \label{subsubsec:univ_Pearcey}

The hard-edge Pearcey process, as discussed in Section \ref{subsubsec:Pearcey_kernels}, has been mostly studied as the limit of nonintersecting squared Bessel process, while \cite{Desrosiers-Forrester08} considered the matrix model equivalent to the nonintersecting squared Bessel process. Very recently the hard-edge Pearcey process for general $\al>-1$ appeared as a scaling limit in an interacting particle system \cite{Cerenzia-Kuan16}. It is notable that as a limiting process, the $\alpha = -1/2$ hard-edge Pearcey process occurs in the Plancherel growth model with $\mathrm{O}(\infty)$ symmetry \cite{Borodin-Kuan10, Kuan13},  and the $\alpha = 1/2$ hard-edge Pearcey process occurs in the Plancherel growth model with $\mathrm{Sp}(\infty)$ symmetry \cite{Cerenzia15}. Here we remind the reader that our nonintersecting Brownian motion model between reflecting walls corresponds to the 2D Yang--Mills theory with $\mathrm{O}(n)$ symmetry, and the model between absorbing walls corrsponds to the 2D Yang--Mills theory with $\mathrm{Sp}(n)$ symmetry. The result in our paper is additional evidence that the hard-edge Pearcey process with $\alpha = \pm 1/2$ is universal, and indicates that they are features of symmetry classes.

As pointed out in \cite{Borodin-Kuan10} and \cite{Cerenzia15}, the hard-edge Pearcey processes with $\alpha = \pm 1/2$ are in the class of 2D anisotropic KPZ with a wall. It is generally acknowledged that the limiting behaviors of nonintersecting Brownian motions, without a wall, are in the 1D KPZ universality class \cite{Corwin-Hammond11}, and also in the 2D anisotropic KPZ class, without a wall \cite{Borodin-Ferrari08a}. The results in this paper shows the relation between nonintersecting Brownian motions with walls and class of 2D anisotropic KPZ with a wall.

\subsubsection{Universality for hard-edge tacnode process} 

For the more recent and more complicated hard-edge tacnode process, the results are scarcer. Outside of the current paper and the very recent preprint \cite{Ferrari-Veto16} (see below), the literature is all based on the model of nonintersecting squared Bessel process. Although it is too early to call the hard-edge tacnode process universal based on available results, the result in our paper is a hint that the hard-edge tacnode processes with $\alpha = \pm 1/2$ should be in the class of anisotropic KPZ with a wall, since they occur in the same models as the hard-edge Pearcey process with $\alpha = \pm 1/2$.

When the current paper was almost complete, we found the very recent preprint \cite{Ferrari-Veto16}. In this paper, the authors consider nonintersecting Brownian motions with one absorbing wall. They call the limiting process, which turns out to be exactly our limiting process with kernel $K^{\tac, \odd}$, \emph{hard-edge tacnode process}, without identifying it with the existing hard-edge tacnode process with $\alpha = 1/2$.
In Appendix \ref{sec:equivalence_FV}, we prove:
\begin{prop} \label{prop:FV_equiv}
  The multi-time correlation kernel $\widehat{K}^{\ext}(T_1, U_1; T_2, U_2)$ defined in \cite[Theorem 2.6]{Ferrari-Veto16} that depends on a parameter $R$ implicitly, satisfies
  \begin{equation}
    \widehat{K}^{\ext}(T_1, U_1; T_2, U_2) = 2^{2/3} K^{\tac, \odd}_{2^{7/3} T_1, 2^{7/3} T_2}(2^{2/3} U_1, 2^{2/3} U_2; 2^{2/3} R).
  \end{equation}
\end{prop}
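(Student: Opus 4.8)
The plan is a direct comparison of explicit formulas. I would begin from the representation of $K^{\tac, \odd}$ furnished by \eqref{eq:tacnode_kernel_sym} together with the display defining $\widetilde{K}^{\tac, \odd}_{s, t}$, and place it side by side with the double–contour–integral formula for $\widehat{K}^{\ext}(T_1, U_1; T_2, U_2)$ in \cite[Theorem 2.6]{Ferrari-Veto16}. First I would dispose of the ``non-essential'' summands: the term $-1_{s < t}(\phi_{s, t}(\xi, \eta) - \phi_{s, t}(\xi, -\eta))$ in $K^{\tac, \odd}$ and the analogous heat-kernel contribution in $\widehat{K}^{\ext}$ are elementary Gaussians, and checking that they agree under the substitution $T_i \mapsto 2^{7/3} T_i$, $U_i \mapsto 2^{2/3} U_i$ pins down the overall prefactor $2^{2/3}$, the orientation of the time and space labels, and all Jacobian bookkeeping.

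The substantive step is to match the ``essential'' parts. Our $\widetilde{K}^{\tac, \odd}_{s, t}$ is built out of the functions $f(u; \sg)$, $g(u; \sg)$ of \eqref{tac18}, which are entries of the Flaschka--Newell solution $\mathbf\Psi(\z; s)$ normalized by \eqref{cr5}, integrated against $e^{s u^2/2 - t v^2/2}$, the Cauchy kernel $1/(u - v)$, and the odd factor $e^{-iu\xi}\sin(v\eta)$ over the contour $\Sg_T$; the kernel of \cite{Ferrari-Veto16}, by contrast, is presented through Airy-type functions dressed by the resolvent of an Airy operator on a half-line, with the parameter $R$ entering as a shift. I would bridge the two by the triple-integral representation of Proposition \ref{prop:triple_int}, which is precisely the resolvent-free rewriting of $K^{\tac, \odd}$ at the multi-time level, and then identify the scalar functions occurring there with the Airy-resolvent building blocks of \cite{Ferrari-Veto16}. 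That identification can be carried out along the lines already used in \cite{Liechty-Wang14-2} for the ordinary tacnode kernel: the two families of special functions solve the same linear ODE in the spectral variable with the same asymptotic normalization, hence coincide by uniqueness; equivalently one invokes the known expression of $\mathbf\Psi$ through Airy functions deformed along the Hastings--McLeod trajectory. After this, deforming $\Sg_T$ to the contours used in \cite{Ferrari-Veto16} (permissible by the super-exponential decay in \eqref{cr5}) and chasing the constants $2^{2/3}$, $2^{7/3}$ and $\sg = 2^{2/3} R$ through the quadratic exponents and the Cauchy factor turns one formula into the other.

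The main obstacle I expect is exactly this identification of special functions, since \cite{Ferrari-Veto16} do not phrase their kernel in terms of a \Painleve\ Lax pair, so one must translate their Airy-resolvent data into the $\mathbf\Psi$-picture governed by \eqref{cr4}--\eqref{cr5}. This is the hard-edge/odd analogue of the known equivalence between the various representations of the ordinary tacnode kernel, and it carries the extra subtlety that the half-line resolvent corresponds to the folding $\xi \mapsto \xi^2$ behind Proposition \ref{lem:tacnode_relation}, so that the inhomogeneous PII data with $\nu = 1$ --- produced by the Schlesinger transform of Section \ref{sec:M_nu_defn} --- is implicitly present and must be unwound. Once this dictionary is in place, the remaining verification (the Gaussian terms, the contour deformation, and the constant chase) is routine.
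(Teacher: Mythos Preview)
Your overall plan---dispose of the Gaussian pieces, then match the essential parts via the triple-integral representation of Proposition~\ref{prop:triple_int} and an identification of the scalar building blocks---is a reasonable route and could be made to work, but it differs from what the paper actually does, and your sketch contains one misconception.

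The paper does \emph{not} match the Lax-pair functions $f\pm g$ against the Ferrari--\Veto\ functions directly. Instead it uses the already established equivalence (from \cite{Liechty-Wang14-2}) between $K^{\tac}$ and Delvaux's Airy-resolvent kernel $\lcal_{\tac}$, so that both sides of \eqref{eq:ess_FV_equiv} live from the outset in the Airy-resolvent world. The right-hand side is then rewritten via \cite[Formula (2.29)]{Delvaux13} and \cite[Formulas (41), (46)]{Liechty-Wang14-2} as an integral of products $h(u,s,\tau_1)h(v,s,-\tau_2)$ built from inner products $\langle b_{\tau,z,s},(\1-\B_s)^{-1}\delta_0\rangle$. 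The substantive technical step is an operator-theoretic one: after a shift/translation argument one lands on a four-term resolvent expression (formula \eqref{eq:four_terms}), and the key identity \cite[Lemma 2.1]{Baik-Liechty-Schehr12} collapses it to a single term involving $(\1-\Pi_0\widetilde{\B}_{2^{2/3}R}\Pi_0)^{-1}$, which is then recognized as the operator $(\1-\widehat{K}_0)^{-1}$ in \cite[Formula (2.31)]{Ferrari-Veto16}. No ODE-uniqueness argument and no contour deformation are used.

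Your suggested route via Proposition~\ref{prop:triple_int} is morally the same starting point (and indeed \cite[Proposition 2.8]{Ferrari-Veto16} supplies the matching rank-1 structure on their side), but the ``identification of special functions'' you flag as the main obstacle is exactly where the paper's work lies, and the paper does it by resolvent algebra rather than by the ODE/asymptotics argument you propose. Your approach would buy a more self-contained argument if you could produce the dictionary $\int_{\Sg_T}e^{\tau u^2+izu}(f-g)\,du \leftrightarrow \langle \psi,(\1-\B)^{-1}\delta_0\rangle$ directly; the paper's approach buys the ability to recycle existing Airy-resolvent identities wholesale.

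Finally, your remark that ``the inhomogeneous PII data with $\nu=1$ produced by the Schlesinger transform is implicitly present and must be unwound'' is a red herring. The Schlesinger transform of Section~\ref{sec:M_nu_defn} enters the proof of Proposition~\ref{lem:tacnode_relation} (the link to Delvaux's hard-edge tacnode kernel for squared Bessel paths), but it plays no role in Proposition~\ref{prop:FV_equiv}: the Ferrari--\Veto\ model is Brownian motion with one absorbing wall, not a squared Bessel process, and the proof stays entirely at $\nu=0$.
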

The proof of Proposition \ref{prop:FV_equiv} is fairly involved. But a variation of our $\NIBMabs$ is very similar to the model in \cite{Ferrari-Veto16} and the equivalence of the limiting behaviors is easier to see, at least heuristically. We explain it in Appendix \ref{subsec:similar_model}.

Although it is not proven, the authors of \cite{Ferrari-Veto16} conjecture that the odd tacnode process occurs as a scaling limit in the six-vertex model with domain wall boundary conditions, or equivalently in random domino tilings, defined on a pentagonal domain, see \cite[Section 3]{Ferrari-Veto16}. This is of course further evidence of its universal character, and is deserving of further study.

\subsection{Plan of the rest of the paper}

The rest of the paper is organized as follows. In Section \ref{algebraic}, the determinantal structure of the processes $\NIBMref$ and $\NIBMabs$ is analyzed using the Eynard--Mehta formula, and the correlation kernels are described in terms of discrete Gaussian orthogonal polynomials. In Section \ref{sec:asymptotics}, these kernels are related to the model of nonintersecting Brownian paths on the unit circle studied in \cite{Liechty-Wang14-2} and Theorem \ref{thm:main} is proved based on the asymptotic results obtained in that paper. In Section \ref{sec:rank1} we prove that the derivative of the kernels $K^{\tac, \even}$ and $K^{\tac, \odd}$ with respect to the parameter $\sg$ is a rank-1 kernel, which is used later in the proof of Proposition \ref{lem:tacnode_relation}.

 The remaining sections  \ref{sec:M_nu_defn} and \ref{sec:kernels} deal with the hard-edge tacnode kernel as defined by Delvaux in \cite{Delvaux13}, and its relationship to the kernels $K^{\tac, \even}$ and $K^{\tac, \odd}$. In Section \ref{sec:M_nu_defn} we present the Lax system which describes the special functions appearing in Delvaux's formula for the hard-edge tacnode kernel. These special functions are given in the form of a $4\times 4$ matrix denoted $M_\nu$ which depends on a parameter $\nu>-1/2$ related to the parameter $\al>-1$ in the corresponding Bessel process by $\nu= \al+1/2$. In this section we formulate the Schlesinger transformation which enables us to write $M_{\nu+1}$ explicity in terms of $M_\nu$. Finally in Section \ref{sec:kernels} we define the hard-edge tacnode kernel of Delvaux and use the results obtained in Sections \ref{sec:M_nu_defn} and \ref{sec:kernels} to prove Proposition \ref{lem:tacnode_relation} and Theorem \ref{thm:half-integer}.

 In Appendix \ref{sec:formulas_t^c_d} we give explicit formulas for constants in Theorem \ref{thm:main}\ref{enu:thm:main_a}. In Appendix \ref{sec:equivalence_FV} we prove Proposition \ref{prop:FV_equiv}.
 
\section{Orthogonal polynomial formulas for the processes $\NIBMref$ and $\NIBMabs$}\label{algebraic}
In this section we derive a formula for the correlation kernels for the processes  $\NIBMref$ and $\NIBMabs$ in terms of a system of discrete Gaussian orthogonal polynomials, similar to the kernel derived in \cite{Liechty-Wang14-2} for nonintersecting paths on the unit circle. To describe the kernels we first define the lattice $L_n$ of mesh $1/n$,
\begin{equation}\label{def:lattice}
  L_n = \left\{ \frac{k}{n} \mid k \in \intZ \right\}.
\end{equation}
Now define the monic polynomials $p^T_{n, k}(s) = s^k + \cdots$ as the discrete Gaussian orthogonal polynomials satisfying the orthogonality condition
\begin{equation} \label{eq:defn_disc_Gaussian_poly}
  \frac{1}{n} \sum_{s \in L_n} p^T_{n, j}(s) p^T_{n, k}(s) e^{-\frac{nT s^2}{2}} =
    \begin{cases}
      0 & \text{if $j \neq k$}, \\
      h^T_{n, j} & \text{if $j = k$},
    \end{cases}
\end{equation}
where $\{h^T_{n, j}\}_{j=0}^\infty$ is a sequence of normalizing constants. These polynomials depend on two parameters: $n\in \N$, which is the number of particles in the $\NIBMref$ and $\NIBMabs$; and $T>0$ which is the total time of the processes $\NIBMref$ and $\NIBMabs$. Note that since the weight $e^{-\frac{nT s^2}{2}}$ is an even function of $s$ and the lattice $L_n$ is symmetric about the origin, the polynomials $p^T_{n, k}(s)$ are even for $k$ even and odd for $k$ odd. For $n,k \in \N$ and $a>0$ define also the discrete Fourier transform 
\begin{equation}\label{eq:S_ka_defn1}
  S_{k, a}(x; n, T) = \frac{1}{n} \sum_{s \in L_n} p^T_{n, k}(s) e^{-\frac{na s^2}{2}} e^{insx}.
\end{equation}
We then have the following proposition.

\begin{prop}\label{prop:op_formulas}
The processes $\NIBMref$ and $\NIBMabs$ are determinantal. 
  \begin{enumerate}[label=(\alph*)]
  \item \label{enu:prop_finite_n_ref}
The multi-time correlation kernel for $\NIBMref$ is given as
\begin{equation} \label{eq:kernel_ref}
 K^{\reflect}_{t_i, t_j}(x, y; n, T) = \widetilde{K}^{\reflect}_{t_i, t_j}(x, y; n, T) - \W^{\reflect}_{[i, j)}(x, y),
\end{equation}
where
\begin{equation} \label{eq:tilde_K_full_expansion}
  \widetilde{K}^{\reflect}_{t_i, t_j}(x, y; n, T) = \frac{n}{\pi} \sum^{n - 1}_{k = 0} \frac{1}{h^T_{n, 2k}} S_{2k, T - t_i}(x; n, T) S_{2k,  t_j}(-y; n, T),
\end{equation}
and
\begin{equation} \label{eq:W_circ_and_W}
    \W^{\reflect}_{[i, j)}(x, y) 
    =  \frac{\sqrt{n}}{\sqrt{2\pi t}} \sum^{\infty}_{k = -\infty} \left( e^{-\frac{n(y - x + 2k\pi)^2}{2(t_j - t_i)}} + e^{-\frac{n(y + x + 2k\pi)^2}{2(t_j - t_i)}} \right).
\end{equation}
 \item \label{enu:prop_finite_n_abs}
The multi-time correlation kernel for $\NIBMabs$ is given as
\begin{equation} \label{eq:kernel_ref}
 K^{\absorb}_{t_i, t_j}(x, y; n, T) = \widetilde{K}^{\absorb}_{t_i, t_j}(x, y; n, T) - \W^{\absorb}_{[i, j)}(x, y),
\end{equation}
where
\begin{equation} \label{eq:tilde_K_full_expansion_abs}
    \widetilde{K}^{\absorb}_{t_i, t_j}(x, y; n, T) =  \frac{n}{\pi} \sum^{n - 1}_{k = 0} \frac{1}{h^T_{n, 2k + 1}} S_{2k+1, T - t_i}(x; n, T)S_{2k+1, t_j}(-y; n, T),
\end{equation}
and
\begin{equation} \label{eq:W_circ_and_W_abs}
    \W^{\absorb}_{[i, j)}(x, y) = 
 \frac{\sqrt{n}}{\sqrt{2\pi t}} \sum^{\infty}_{k = -\infty} \left( e^{-\frac{n(y - x + 2k\pi)^2}{2(t_j - t_i)}} - e^{-\frac{n(y + x + 2k\pi)^2}{2(t_j - t_i)}} \right).
\end{equation}
\end{enumerate}
\end{prop}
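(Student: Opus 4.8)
I would follow the Eynard--Mehta route used in \cite{Liechty-Wang14-2} for the circular model, adapted to the image sums \eqref{eq:reflecting_tp}--\eqref{eq:absorbing_tp}. Start from the non-confluent finite model: place start points $0 < a_1 < \dots < a_n$ at time $0$ and end points $0 < b_1 < \dots < b_n$ at time $T$, and let $n$ Brownian paths with diffusion parameter $n^{-1/2}$ run on $[0,\pi]$ over $[0,T]$ with reflecting (resp.\ absorbing) walls, conditioned not to collide and, in the absorbing case, conditioned on none being killed at a wall. By the Karlin--McGregor formula the joint density at ordered times $t_1 < \dots < t_m$ is the normalized product
\[
  \det\left[ P^{\xxx}_{n^{-1/2}}(a_i, x^{(1)}_j; t_1) \right] \prod_{r = 1}^{m - 1} \det\left[ P^{\xxx}_{n^{-1/2}}(x^{(r)}_i, x^{(r+1)}_j; t_{r+1} - t_r) \right] \det\left[ P^{\xxx}_{n^{-1/2}}(x^{(m)}_i, b_j; T - t_m) \right],
\]
with $\xxx \in \{\reflect, \absorb\}$, and one then takes the confluent limit $a_i \to 0$, $b_j \to 0$ and feeds the result into the Eynard--Mehta theorem.

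The structural input that produces the discrete Gaussian polynomials is Poisson summation applied to \eqref{eq:reflecting_tp}--\eqref{eq:absorbing_tp} with $\sg = n^{-1/2}$, which yields the spectral (eigenfunction) decompositions $P^{\reflect}_{n^{-1/2}}(x, y; t) = \frac{1}{\pi} \sum_{k \in \intZ} e^{-tk^2/(2n)} \cos(kx)\cos(ky)$ and $P^{\absorb}_{n^{-1/2}}(x, y; t) = \frac{1}{\pi} \sum_{k \in \intZ} e^{-tk^2/(2n)} \sin(kx)\sin(ky)$ in Neumann, resp.\ Dirichlet, eigenfunctions. Expanding each determinant in the product above by Cauchy--Binet turns it into a sum over $n$-element sets of modes, and the confluent limit is governed by the behaviour of the eigenfunctions near $0$. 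Since $\cos(ka)$ is even in $a$ with Taylor coefficients that are polynomials in $k^2$, the determinant $\det[\cos(k_\ell a_i)]$, divided by $\prod_i a_i^{2(i-1)}$, tends as $a_i \to 0$ to a constant times the Vandermonde in $k_1^2, \dots, k_n^2$, and likewise at the right end; in the absorbing case the expansion $\sin(ka) = ka\,(1 + O(a^2))$ produces an extra factor $\prod_\ell k_\ell$, which converts that Vandermonde into the determinant of the monomials $k, k^3, \dots, k^{2n-1}$. Thus in the confluent limit the process reduces to a determinantal ensemble on the modes with a Vandermonde-squared weight in $k^2$, whose correlation kernel is, by the usual Cauchy--Binet/orthogonal-polynomial mechanism, a sum of $n$ terms built from the orthogonal polynomials for the induced discrete weight: these are the even-degree polynomials $p^T_{n, 2k}$, $k = 0, \dots, n-1$, for reflecting walls (since $\{1, k^2, \dots, k^{2(n-1)}\}$ and $\{p^T_{n,0}, \dots, p^T_{n,2(n-1)}\}$ span the same space against the symmetric weight on $L_n$), and the odd-degree polynomials $p^T_{n, 2k+1}$, $k = 0, \dots, n-1$, for absorbing walls. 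The change of variable $s = k/n$ identifies the mode sum $\sum_{k \in \intZ}$ with eigenvalue $e^{-tk^2/(2n)}$ with the discrete Gaussian sum $\frac{1}{n} \sum_{s \in L_n} e^{-nts^2/2}$, so the orthogonality \eqref{eq:defn_disc_Gaussian_poly} and the transforms $S_{k, a}$ of \eqref{eq:S_ka_defn1} are precisely what appears.

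Assembling these pieces via the Eynard--Mehta theorem gives a kernel of the stated form $K^{\xxx}_{t_i, t_j}(x, y) = \widetilde{K}^{\xxx}_{t_i, t_j}(x, y) - \W^{\xxx}_{[i, j)}(x, y)$, in which the subtracted term $\W^{\xxx}_{[i, j)}$, present only for $i < j$, is the one-step transition density $P^{\xxx}_{n^{-1/2}}(x, y; t_j - t_i)$ itself --- this is exactly \eqref{eq:W_circ_and_W}, resp.\ \eqref{eq:W_circ_and_W_abs}, after writing out $P^{\xxx}_{n^{-1/2}}$ --- while the biorthogonal part is obtained by inverting the Gram matrix of the confluent endpoints using \eqref{eq:defn_disc_Gaussian_poly}, which diagonalizes it with entries proportional to $h^T_{n, 2k}$ (resp.\ $h^T_{n, 2k+1}$), collapsing the sum to $\frac{n}{\pi} \sum_{k = 0}^{n-1} \frac{1}{h^T_{n, 2k}} S_{2k, T - t_i}(x; n, T) S_{2k, t_j}(-y; n, T)$, and analogously with $2k$ replaced by $2k + 1$ in the absorbing case. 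The time split --- $T - t_i$ on the $x$-side and $t_j$ on the $y$-side --- is the standard Eynard--Mehta placement, where the argument $y$ at time $t_j$ carries the initial-condition function propagated forward over time $t_j$ and the argument $x$ at time $t_i$ carries the final-condition function propagated backward over time $T - t_i$. As a consistency check, $\int_0^\pi S_{2k, T - t}(x) S_{2k, t}(-x)\, dx = \frac{\pi}{n} h^T_{n, 2k}$ by \eqref{eq:defn_disc_Gaussian_poly}, so $\int_0^\pi K^{\reflect}_{t, t}(x, x)\, dx = n$ as required.

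I expect the main obstacle to be the rigorous execution of the confluent limit together with the even/odd bookkeeping and the exact constants: one must show that the $a_i, b_j \to 0$ limit of the Karlin--McGregor density exists, that it may be interchanged with the mode sums $\sum_{k \in \intZ}$, and that it selects precisely the $n$ lowest even (resp.\ odd) modes, and one must carry the factors of $\pi$, $n$, and the $h^T_{n, k}$ through the whole computation so that the final answer is \eqref{eq:tilde_K_full_expansion}--\eqref{eq:W_circ_and_W} and \eqref{eq:tilde_K_full_expansion_abs}--\eqref{eq:W_circ_and_W_abs} exactly, not merely up to a constant. A softer alternative, useful as an independent check, is to note that \NIBMref\ and \NIBMabs\ are the $x \mapsto -x$ symmetric, resp.\ antisymmetric, reductions of the periodic (circular) model of \cite{Liechty-Wang14-2} under the natural two-to-one folding of the circle $\realR/2\pi\intZ$ onto $[0,\pi]$, so that the kernels can in principle be read off by symmetrizing or antisymmetrizing the circular kernel in the spatial variables; but the direct Eynard--Mehta computation above is the cleaner way to pin down the normalizations.
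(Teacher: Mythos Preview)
Your proposal is correct and follows essentially the same route as the paper: Karlin--McGregor for the nonintersecting density, the confluent limit at the endpoints (the paper does this via l'H\^opital on the even/odd derivatives of $P^{\xxx}$ rather than Cauchy--Binet on the mode expansion, but this is the same computation in a different order), Poisson summation to obtain the cosine/sine eigenfunction expansions, and then the Eynard--Mehta theorem with the Gram matrix diagonalized by the discrete Gaussian orthogonal polynomials of even (resp.\ odd) degree. Your ``softer alternative'' of folding the circular model is in fact proved separately in the paper as Lemma~\ref{lem:kernels_relation} and used for the asymptotics, so your instinct that it serves as an independent check is exactly how the paper is organized.
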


The starting point for the proof of Proposition \ref{prop:op_formulas} is the formulas \eqref{eq:reflecting_tp} and \eqref{eq:absorbing_tp} for the transition probability for Brownian motion between a pair of walls at $0$ and $\pi$. Let $P^{\reflect}(x, y; t)$ and $P^{\absorb}(x, y; t)$ denote the the transition probabilities \eqref{eq:reflecting_tp} and \eqref{eq:absorbing_tp}, respectively, with diffusion parameter $\sg=n^{-1/2}$: 
\begin{align}
  P^{\reflect}(x, y; t) = {}& \frac{\sqrt{n}}{\sqrt{2\pi t} } \sum^{\infty}_{k = -\infty} e^{-\frac{n(y - x + 2k\pi)^2}{2t}} + e^{-\frac{n(y + x + 2k\pi)^2}{2t }}, \\
  P^{\absorb}(x, y; t) = {}& \frac{\sqrt{n}}{\sqrt{2\pi t} } \sum^{\infty}_{k = -\infty} e^{-\frac{n(y - x + 2k\pi)^2}{2t}} - e^{-\frac{n(y + x + 2k\pi)^2}{2t}}.
\end{align}
%Here we remark that the integral of $P^{\reflect}(a; b; t)$ with respect to $b$ on the interval $(0, \pi)$ is $1$, but the integral of $P^{\absorb}(a; b; t)$ with respect to $b$ on the interval $(0, \pi)$ is less than $1$. This is because that if the walls are absorptive, then there is a certain chance that the particle is stuck on either of the two walls at time $t > 0$ with the position $0$ or $\pi$, while if the walls are reflective, they do not absorb the particle.

Fix $n$ starting points $A = \{ a_1, \dotsc, a_n \}$ where $0 < a_1 < \dotsb < a_n < \pi$, and $n$ ending points $B = \{ b_1, \dotsc, b_n \}$ where $0 < b_1 < \dotsb < b_n < \pi$. Consider the transition probability density for $n$ particles $X_1(t), \dotsc, X_n(t)$ in Brownian motion between the pair of (reflecting or absorbing) walls at $0$ and $\pi$, such that they start at positions $A_n$, end at positions $B_n$ after time $t > 0$, and the paths of the particles do not intersect during the time. 
By the Karlin--McGregor theorem \cite{Karlin-McGregor59} this transition probability density is given by the determinant
\begin{equation}
  P^{\xxx}(A; B; n; t) = \det \left( P^{\xxx}(a_i; b_j; n; t) \right)^n_{i, j = 1}, \quad \xxx = \text{$\reflect$ or $\absorb$}.
\end{equation}
Now consider the model of $n$ particles $X_1(t), \dotsc, X_n(t)$ in Brownian motion with a pair of (reflecting or absorbing) walls at $0$ and $\pi$, starting at $A^{(0)} = \{ a^{(0)}_1, \dotsc, a^{(0)}_n \}$ at time $t_0 = 0$ and conditioned so that:
\begin{enumerate}
\item They end at $A^{(m + 1)} = \{ a^{(m + 1)}_1 , \dotsc, a^{(m + 1)}_n \}$ at time $t_{m + 1} =  T > 0$, and
\item Their paths do not intersect during the time $[0, T]$. 
\end{enumerate}
Then the joint probability density for the location of the particles that at the times $t_1 < t_2 < \dotsb < t_m$ in $(0, T)$ such that
\begin{equation}
  X_i(t_j) = a^{(j)}_i, \quad \text{where $0 < a^{(j)}_1  < a^{(j)}_2 < \dotsb < a^{(j)}_n < \pi$ for all $j = 0, 1, \dotsc, m, m + 1$},
\end{equation}
is given by
\begin{multline}
  P^{\xxx}(A^{(1)}; \dotsc; A^{(m)}; n; t_1, \dotsc, t_m) = \\
  P^{\xxx}(A^{(0)}; A^{(m + 1)}; n; T)^{-1} \prod^{m + 1}_{j = 1} P^{\xxx}(A^{(j - 1)}; A^{(j)}; n; t_{j + 1} - t_j),
\end{multline}
where $A^{(j)} = \{ a_1^{(j)}, \dotsc, a_n^{(j)} \}$.
In order to arrive at the models $\NIBMref$ and $\NIBMabs$ we must consider the degenerate case that $a^{(0)}_i \to 0_+$ and $a^{(m + 1)} \to 0_+$ for all $i = 1, \dotsc, n$. 

\subsection{Proof of Proposition \ref{prop:op_formulas}\ref{enu:prop_finite_n_ref}}\label{subsec_reflecting_wbc}

Note that $P^{\reflect}(a, b; t)$ is an even function of both $a$ and $b$. By \lHopital's rule, as $a^{(0)}_1, \dotsc, a^{(0)}_n \to 0$ we have
\begin{multline} \label{eq:ref_wall_first_trans}
  P^{\reflect}(A^{(0)}; A^{(1)}; n; t_1 - t_0) = \frac{\prod_{1 \leq i < j \leq n} \left( (a^{(0)}_j)^2 - (a^{(0)}_i)^2 \right)}{\prod^{n - 1}_{j = 0} (2j)!} \\
  \times \det \left. \left( \frac{d^{2(i- 1)}}{dx^{2(i- 1)}} P^{\reflect}(x; a^{(1)}_j; t_1 - t_0) \right\rvert_{x = 0} \right)^n_{i, j = 1} (1 + \bigO(\max \lvert a^{(0)}_j \rvert));
\end{multline}
as $a^{(m+1)}_1, \dotsc, a^{(m+1)}_n \to 0$,
\begin{multline} \label{eq:ref_wall_last_trans}
  P^{\reflect}(A^{(m)}; A^{(m + 1)}; n; t_{m + 1} - t_m) = \frac{\prod_{1 \leq i < j \leq n} \left( (a^{(m + 1)}_j)^2 - (a^{(m + 1)}_i)^2 \right)}{\prod^{n - 1}_{j = 0} (2j)!} \\
  \times \det \left. \left( \frac{d^{2(i- 1)}}{dy^{2(i - 1)}} P^{\reflect}(a^{(m)}_j; y; t_{m + 1} - t_m) \right\rvert_{y = 0} \right)^n_{i, j = 1} (1 + \bigO(\max \lvert a^{(m + 1)}_j \rvert));
\end{multline}
and as both $a^{(0)}_1, \dotsc, a^{(0)}_n \to 0$ and $a^{(m+1)}_1, \dotsc, a^{(m+1)}_n \to 0$,
\begin{multline} \label{eq:ref_wall_total_trans}
  P^{\reflect}(A^{(0)}; A^{(m + 1)}; n; T) = \frac{\prod_{1 \leq i < j \leq n} \left( (a^{(0)}_j)^2 - (a^{(0)}_i)^2 \right) \left( (a^{(m + 1)}_j)^2 - (a^{(m + 1)}_i)^2 \right)}{\prod^{n - 1}_{j = 0} ((2j)!)^2} \\
  \times R^{\reflect}_n(T) (1 + \bigO(\max_{j = 1, \dotsc, n} (\lvert a^{(0)}_j \rvert, \lvert a^{(m + 1)}_j) \rvert)),
\end{multline}
where
\begin{equation} \label{eq:R^ref_defn}
  \begin{split}
    R^{\reflect}_n(T) = {}& \det \left( \left. \frac{d^{2(j + k - 2)}}{dx^{2(j - 1)} y^{2(k - 1)}} P^{\reflect}(x; y; T) \right\rvert_{x = y = 0} \right)^n_{j, k = 1} \\
    = {}& \det \left( \left. \frac{d^{2(j + k - 2)}}{dx^{2(j + k - 2)}} P^{\reflect}(x; 0; T) \right\rvert_{x = 0} \right)^n_{j, k = 1} \\
    = {}& \det \left( \left. \frac{d^{2(j + k - 2)}}{dz^{2(j + k - 2)}} \left( e^{-\frac{n z^2}{2T}} \vartheta_3 \left( \frac{i\pi n z}{T}, e^{-\frac{2\pi^2 n}{T}} \right) \right) \right\rvert_{z = 0} \right)^n_{j, k = 1},
  \end{split}
\end{equation}
where $\vartheta_3$ is the Jacobi theta function
\begin{equation}
  \vartheta_3(z; q) = \sum^{\infty}_{k = -\infty} e^{2ki z} q^{k^2}.
\end{equation}
Thus in the degenerate case, the joint probability density function of the particles at times $t_1 < \dotsb < t_m \in (0, T)$ is
\begin{multline} \label{eq:jpdf_ref}
  P^{\reflect}(A^{(1)}; \dotsc; A^{(m)}; n; t_1, \dotsc, t_m) = \frac{1}{R^{\reflect}_n(T)} \det \left( \widetilde{\phi}^{\reflect}_j(a^{(1)}_k) \right)^n_{j, k = 1} \det \left( \widetilde{\psi}^{\reflect}_j(a^{(m)}_k) \right)^n_{j, k = 1} \\
  \times \prod^{m - 1}_{j = 1} P^{\reflect}(A^{(j - 1)}; A^{(j)}; n; t_{j + 1} - t_j),
\end{multline}
where
\begin{equation}
\widetilde{  \phi}^{\reflect}_j(a) = \left. \frac{d^{2(j - 1)}}{dx^{2(j - 1)}} P^{\reflect}(x, a; t_1) \right\rvert_{x = 0}, \quad \widetilde{\psi}^{\reflect}_j(a) = \left. \frac{d^{2(j - 1)}}{dy^{2(j - 1)}} P^{\reflect}(a, y; T - t_m) \right\rvert_{y = 0}.
\end{equation}

%It is clear that the model \NIBMref\ is a determinantal process. We derive its correlation kernel below.
By the Poisson summation formula, we have
\begin{equation} \label{eq:Fourier_of_P(x;y;t)}
  \begin{split}
    P^{\reflect}(y; x; t) = P^{\reflect}(x; y; t) = {}& \frac{\sqrt{n}}{\sqrt{2\pi t}} \sum^{\infty}_{k = -\infty} e^{-\frac{n(y - x + 2k\pi)^2}{2t}} + e^{-\frac{n(y + x + 2k\pi)^2}{2t}} \\
    = {}& \frac{\sqrt{n}}{\sqrt{2\pi t}} \sum^{\infty}_{k = -\infty} \frac{\sqrt{t}}{\sqrt{2\pi n}} \left( e^{ikx - \frac{tk^2}{2n}} + e^{-ikx - \frac{tk^2}{2n}} \right) e^{iky} \\
    = {}& \frac{1}{\pi} \sum_{k \in \intZ} e^{ikx - \frac{tk^2}{2n}} \cos(ky) = \frac{1}{\pi} + \frac{2}{\pi} \sum^{\infty}_{k = 1} \cos(kx) e^{-\frac{tk^2}{2n}} \cos(ky) \\
    = {}& \frac{1}{\pi} \sum_{s \in L_n} e^{insx - \frac{ts^2 n}{2}} \cos(nsy),
  \end{split}
\end{equation}
where $L_n$ is the lattice with mesh $1/n$ defined in \eqref{def:lattice}.
Then it is straightforward to calculate that for all $j = 0, 1, 2, \dotsc$, 
\begin{equation} \label{eq:derivative_of_P^ref}
  \left. \frac{d^{2j}}{dx^{2j}} P^{\reflect}(x; a; t) \right\rvert_{x = 0} = \left. \frac{d^{2j}}{dy^{2j}} P^{\reflect}(a; y; t) \right\rvert_{y = 0} = \frac{(n i)^{2j}}{\pi} \sum_{s \in L_n} s^{2j} e^{- \frac{n ts^2}{2}} \cos(nsa).
\end{equation}
Hence, letting $p^T_{n, 2k}(x)$ be the even monic polynomials of degree $2k$ defined in \eqref{eq:defn_disc_Gaussian_poly}, we have
\begin{align}
  \det \left( \widetilde{\phi}^{\reflect}_j(a^{(1)}_k) \right)^n_{j, k = 1} = {}& \frac{(n i)^{n(n - 1)}}{\pi^n} \det \left( \phi^{\reflect}_j(a^{(1)}_k) \right)^n_{j, k = 1}, \\
  \det \left( \widetilde{\psi}^{\reflect}_j(a^{(m)}_k) \right)^n_{j, k = 1} = {}& \frac{(n i)^{n(n - 1)}}{\pi^n} \det \left( \psi^{\reflect}_j(a^{(m)}_k) \right)^n_{j, k = 1}, 
\end{align}
where
\begin{align}
  \phi^{\reflect}_j(x) = {}& \sum_{s \in L_n} p^T_{n, 2(j - 1)}(s) e^{-\frac{n t_1 s^2}{2}} \cos(nsx) \notag \\
  = {}& p^T_{n, 2(j - 1)}(0) + 2 \sum^{\infty}_{k = 1} p^T_{n, 2(j - 1)} \left( \frac{k}{n} \right) e^{-\frac{t_1 k^2}{2n}} \cos(kx), \label{eq:phi_ref} 
\end{align}
\begin{align}
  \psi^{\reflect}_j(x) = {}& \sum_{s \in L_n} p^T_{n, 2(j - 1)}(s) e^{-\frac{n (T - t_m) s^2}{2}} \cos(nsx) \notag \\
  = {}& p^T_{n, 2(j - 1)}(0) + 2 \sum^{\infty}_{k = 1} p^T_{n, 2(j - 1)} \left( \frac{k}{n} \right) e^{-\frac{(T - t_m) k^2}{2n}} \cos(kx), \label{eq:psi_ref}
\end{align}
and
\begin{equation} \label{eq:R^ref_in_p}
  \begin{split}
    R^{\reflect}_n(T) = {}& \frac{(ni)^{2n(n - 1)}}{\pi^n} \det \left( \sum_{s \in L_n} s^{2(j - 1)} s^{2(k - 1)} e^{-\frac{nT s^2}{2}} \right)^n_{j, k = 1} \\
    = {}& \frac{n^{n(2n - 1)}}{\pi^n} \det \left( \frac{1}{n} \sum_{s \in L_n} p^T_{n, 2(j - 1)}(s) p^T_{n, 2(k - 1)}(s) e^{-\frac{nT s^2}{2}} \right)^n_{j, k = 1}.
  \end{split}
\end{equation}
By the orthogonality we then obtain
\begin{equation} \label{eq:R^ref_orthogonality}
  R^{\reflect}_n(T) = \frac{n^{n(2n - 1)}}{\pi^n} \prod^{n - 1}_{j = 0} h^T_{n, 2j}.
\end{equation}

We now use the Eynard--Mehta theorem \cite{Eynard-Mehta98} to derive the correlation kernel of the determinantal process. We follow the notational conventions in \cite{Liechty-Wang14-2}. Since we want to reuse the argument for the absorbing wall case, we use notation $\xxx$ throughout the derivation, and read $\xxx = \reflect$ here.

Define (with $\xxx = \reflect$) for $j = 1, \dotsc, n$, $\phi^{\xxx}_j(x)$ and $\psi^{\xxx}_j(x)$ as in \eqref{eq:phi_ref} and \eqref{eq:psi_ref}, and for $k = 1, \dotsc, m - 1$,
\begin{equation} \label{eq:W_l(x,y)_kernel}
  W^{\xxx}_k(x, y) := P^{\xxx}(x; y; t_{k + 1} - t_k).
\end{equation}
Then we define the operator $\Phi^{\xxx}: L^2[0, \pi] \to \ell^2(n)$ as
\begin{equation} \label{eq:defn_Phi}
  \Phi^{\xxx}(f(\theta)) = \left( \int^{\pi}_0 f(\theta) \phi^{\xxx}_1(\theta) d\theta, \dotsc, \int^{\pi}_0 f(\theta) \phi^{\xxx}_n(\theta) d\theta \right)^T,
\end{equation}
the operator $\Psi^{\xxx}: \ell^2(n) \to L^2[0, \pi]$ as
\begin{equation} \label{eq:defn_Psi}
  \Psi^{\xxx}((v_1, \dotsc, v_n)^T) = \sum^n_{k = 1} v^{\xxx}_k \psi_k(\theta),
\end{equation}
and define the integral operator $W^{\xxx}_k: L^2[0, \pi] \to L^2[0, \pi]$ by the kernel function \eqref{eq:W_l(x,y)_kernel}. Furthermore, we define the operators
\begin{equation} \label{eq:defn_W_ij_and_circ_W_ij}
  W^{\xxx}_{[i, j)} :=
  \begin{cases}
    W^{\xxx}_i \dotsm W^{\xxx}_{j - 1} & \text{for $i < j$}, \\
    1 & \text{for $i = j$}, \\
    0 & \text{for $i > j$},
  \end{cases}
  \quad \text{and} \quad
  \W^{\xxx}_{[i, j)} :=
  \begin{cases}
    W^{\xxx}_i \dotsm W^{\xxx}_{j - 1} & \text{for $i < j$}, \\
    0 & \text{for $i \geq j$}.
  \end{cases}
\end{equation}
We also define the operator $M^{\xxx}: \ell^2(n) \to \ell^2(n)$ as
\begin{equation} \label{eq:defn_M_compact}
  M^{\xxx} = \Phi^{\xxx} W^{\xxx}_{[1, m)} \Psi^{\xxx},
\end{equation}
which is represented by the $n \times n$ matrix
\begin{equation} \label{eq:defn_entries_M}
  M^{\xxx}_{ij} = \int^{\pi}_0 \dots \int^{\pi}_0 \phi^{\xxx}_i(\theta_1) W^{\xxx}_1(\theta_1, \theta_2) \dotsm W^{\xxx}_{m - 1}(\theta_{m - 1}, \theta_m) \psi^{\xxx}_j(\theta_m) d\theta_1 \dotsm d\theta_m.
\end{equation}
Then by the Eynard--Mehta theorem the correlation kernel is given  as
\begin{equation} \label{eq:general_formula_E_M}
  K^{\xxx}_{t_i, t_j}(x, y; n, T) = \widetilde{K}^{\xxx}_{t_i, t_j}(x, y; n, T) - \W^{\xxx}_{[i, j)}(x, y), 
\end{equation}
where
\begin{equation} \label{eq:gneral_formula_E_M_auxiliary}
  \widetilde{K}^{\xxx}_{t_i, t_j}(x, y; n, T) = \left( W^{\xxx}_{[i, m)} \Psi (M^{\xxx})^{-1} \Phi^{\xxx} W^{\xxx}_{[1, j)} \right)(x, y).
\end{equation}

Up to here we have given a general construction of the correlation kernel. In order to find an explicit expression, we first define the Fourier coefficients of a function $f \in L^2[0, \pi]$ over the basis $1, \cos x, \cos 2x, \dotsc$, as
\begin{equation} \label{eq:Fourer_coeff_even}
  f(x) = \frac{1}{2} \hat{f}(0) + \sum^{\infty}_{k = 1} \hat{f}(k) \cos(kx), \quad \text{where} \quad \hat{f}(k) = \frac{2}{\pi} \int^{\pi}_0 f(x) \cos(kx) dx.
\end{equation}
The Fourier expansion of $W^{\reflect}_j(x, y)$ with respect to $y$ is given in \eqref{eq:Fourier_of_P(x;y;t)}. Then we have
\begin{equation}
  (W^{\reflect}_jf)(x) = \int^{\pi}_0 W^{\reflect}_j(x, y) f(y) dy = \frac{1}{2} \hat{f}(0) + \sum^{\infty}_{k = 1} e^{-\frac{(t_{j + 1} - t_j)k^2}{2n}} \hat{f}(k) \cos(kx),
\end{equation}
or equivalently,
\begin{equation} \label{eq:W_j_in_dual_space}
  \widehat{W^{\reflect}_j f}(k) = e^{-\frac{(t_{j + 1} - t_j)k^2}{2n}} \hat{f}(k), \quad \text{for all $k = 0, 1, 2, \dotsc$.}
\end{equation}
By the definition of $W^{\reflect}_{[i, j)}$ and using \eqref{eq:W_j_in_dual_space} successively, we have
\begin{equation} \label{eq:W_ij_in_dual_space}
  \widehat{W^{\reflect}_{[i, j)} f}(k) = e^{-\frac{(t_j - t_i)k^2}{2n}} \hat{f}(k), \quad \text{for all $k = 0, 1, 2, \dotsc$.}
\end{equation}
Hence by the Poisson summation formula, for $i < j$,
\begin{equation} \label{eq:W_circ_and_W}
  \begin{split}
    \W^{\reflect}_{[i, j)}(x, y) = W^{\reflect}_{[i, j)}(x, y) = {}& \frac{1}{\pi} + \frac{2}{\pi} \sum^{\infty}_{k = 1} \cos(kx) e^{-\frac{(t_j - t_i)k^2}{2n}} \cos(ky) \\
    = {}& \frac{1}{2\pi} \sum^{\infty}_{k = -\infty} \left( e^{-ikx - \frac{(t_j - t_i)k^2}{2n}} + e^{ikx - \frac{(t_j - t_i)k^2}{2n}} \right) e^{iky} \\
    = {}& \frac{\sqrt{n}}{\sqrt{2\pi t}} \sum^{\infty}_{k = -\infty} \left( e^{-\frac{n(y - x + 2k\pi)^2}{2(t_j - t_i)}} + e^{-\frac{n(y + x + 2k\pi)^2}{2(t_j - t_i)}} \right).
  \end{split}
\end{equation}
As a specialization of \eqref{eq:W_ij_in_dual_space}, consider $W^{\reflect}_{[i, m)} \Psi^{\reflect}$, an operator from $\ell^2(n)$ to $L^2[0, \pi]$. It is represented by an $n$ dimensional row vector, whose $l$-th component has the Fourier coefficients
\begin{equation}
  \widehat{(W^{\reflect}_{[i, m)} \Psi)_l}(k) = 2 p_{2(l - 1)}\left( \frac{k}{n} \right) e^{-\frac{(T - t_i)k^2}{2n}},
\end{equation}
and then we have the formula for the $l$-th component
\begin{equation} \label{eq:formula_WPsi}
  \begin{split}
    (W^{\reflect}_{[i, m)} \Psi^{\reflect})_l(x) = {}& \int^{\pi}_0 W^{\reflect}_{[i, m)}(x, y) \psi^{\reflect}_l(y) dy \\
    = {}& p_{2(l- 1)}(0) + 2 \sum^{\infty}_{k = 1} p_{2(l- 1)} \left( \frac{k}{n} \right) e^{-\frac{(T - t_i) k^2}{2n}} \cos(kx) \\
    = {}& \sum_{s \in L_n} p_{2(l - 1)}(s) e^{-\frac{n(T - t_i)s^2}{2}} e^{insx}.
  \end{split}
\end{equation}
Similarly, $\Phi^{\reflect} W^{\reflect}_{[1, j)}$ is an operator from $L^2[0, \pi]$ to $\ell^2(n)$, and is represented by an $n$ dimensional column vector. Its $l$-th component is
\begin{equation} \label{eq:formula_PhiW}
  \begin{split}
    (\Phi^{\reflect} W^{\reflect}_{[1, j)})(x) = {}& \int^{\pi}_0 \phi^{\reflect}_l(y) W^{\reflect}_{[1, j)}(y, x) dy = \int^{\pi}_0 W^{\reflect}_{[1, j)}(x, y) \phi^{\reflect}_l(y) dy \\
    = {}& p_{2(l- 1)}(0) + 2 \sum^{\infty}_{k = 1} p_{2(l- 1)} \left( \frac{k}{n} \right) e^{-\frac{t_j k^2}{2n}} \cos(kx) \\
    = {}& \sum_{s \in L_n} p_{2(l - 1)}(s) e^{-\frac{n t_j s^2}{2}} e^{-insx}.
  \end{split}
\end{equation}
Hence the $(i, j)$ entry of the matrix $M^{\reflect}$ defined in \eqref{eq:defn_entries_M} is
\begin{equation}
  \begin{split}
    M^{\reflect}_{ij} = {}& \int^{\pi}_0 \int^{\pi}_0 \phi^{\reflect}_i(x) W^{\reflect}_{[1, m)}(x, y) \psi^{\reflect}_j(y) dy dx \\
    = {}& \int^{\pi}_0 \phi^{\reflect}_i(x) (W^{\reflect}_{[1, m)}\psi_j)(x) dx \\
    = {}& \int^{\pi}_0 \left( p_{2(i - 1)}(0) + 2 \sum^{\infty}_{k = 1} p_{2(i - 1)} \left( \frac{k}{n} \right) e^{-\frac{t_1 k^2}{2n}} \cos(kx) \right) \\
    & \phantom{\int^{\pi}_0} \times \left( p_{2(j - 1)}(0) + 2 \sum^{\infty}_{k = 1} p_{2(j - 1)} \left( \frac{k}{n} \right) e^{-\frac{(T - t_m) k^2}{2n}} \cos(kx) \right) dx.
  \end{split}
\end{equation}
By the orthogonality of the basis $1, \cos x, \cos 2x, \dotsc$, we have
\begin{equation}
  \begin{split}
    M^{\reflect}_{ij} = {}& \pi \left( p_{2(i - 1)}(0) p_{2(j - 1)}(0) + 2 \sum^{\infty}_{k = 1} p_{2(i - 1)} \left( \frac{k}{n} \right) p_{2(j - 1)} \left( \frac{k}{n} \right) e^{-\frac{T k^2}{2n}} \right) \\
    = {}& \pi \sum_{s \in L_n} p_{2(i - 1)}(s) p_{2(j - 1)}(s) e^{-\frac{nT s^2}{2}}.
  \end{split}
\end{equation}
By the orthogonality of $p_{2(i - 1)}$ and $p_{2(j - 1)}$ defined in \eqref{eq:defn_disc_Gaussian_poly}, we have
\begin{equation} \label{eq:entries_M}
  M^{\reflect}_{ij} =
  \begin{cases}
    0 & \text{if $i \neq j$}, \\
    \pi n h^T_{n, 2(i - 1)} & \text{if $i = j$}.
  \end{cases}
\end{equation}
Thus by plugging \eqref{eq:formula_WPsi}, \eqref{eq:formula_PhiW}, and \eqref{eq:entries_M} into \eqref{eq:general_formula_E_M} and \eqref{eq:gneral_formula_E_M_auxiliary}, we obtain Proposition \ref{prop:op_formulas}\ref{enu:prop_finite_n_ref}.

%have that the correlation kernel for \NIBMref\ is
%\begin{equation} \label{eq:kernel_ref}
% K^{\reflect}_{t_i, t_j}(x, y; n, T) = \widetilde{K}^{\reflect}_{t_i, t_j}(x, y; n, T) + \W^{\reflect}_{[i, j)}(x, y),
%\end{equation}
%where
%\begin{equation} \label{eq:tilde_K_full_expansion}
%  \widetilde{K}^{\reflect}_{t_i, t_j}(x, y; n, T) = \frac{1}{\pi n} \sum^{n - 1}_{k = 0} \frac{1}{h^T_{n, 2k}} S_{2k, T - t_i}(x; n, T) S_{2k,  t_j}(-y; n, T),
%\end{equation}
%and for any $k, n \in \intZ_+$ and $t > 0$,
%\begin{equation}\label{eq:S_ka_defn1}
%  S_{k, a}(x; n, T) = \frac{1}{n} \sum_{s \in L_n} p^T_{n, k}(s) e^{-\frac{na s^2}{2}} e^{insx}.
%\end{equation}

\subsection{Proof of Proposition \ref{prop:op_formulas}\ref{enu:prop_finite_n_abs}}

The derivation of the correlation kernel in the case of absorbing walls is very similar. Since $P^{\absorb}(a, b; t)$ is an odd function of both $a$ and $b$ we have, analogous to \eqref{eq:ref_wall_first_trans}, \eqref{eq:ref_wall_last_trans}, and \eqref{eq:ref_wall_total_trans}, as $a^{(0)}_1, \dotsc, a^{(0)}_n \to 0$,
\begin{multline} \label{eq:abs_wall_first_trans}
  P^{\absorb}(A^{(0)}; A^{(1)}; n; t_1 - t_0) = \frac{\prod_{1 \leq i < j \leq n} \left( (a^{(0)}_j)^2 - (a^{(0)}_i)^2 \right)}{\prod^{n - 1}_{j = 0} (2j + 1)!} \prod^n_{j = 1} a^{(0)}_j \\
  \times \det \left. \left( \frac{d^{2i- 1}}{dx^{2i- 1}} P^{\absorb}(x; a^{(1)}_j; t_1 - t_0) \right\rvert_{x = 0} \right)^n_{i, j = 1} (1 + \bigO(\max \lvert a^{(0)}_j \rvert));
\end{multline}
as $a^{(m+1)}_1, \dotsc, a^{(m+1)}_n \to 0$,
\begin{multline} \label{eq:abs_wall_last_trans}
  P^{\absorb}(A^{(m)}; A^{(m + 1)}; n; t_{m + 1} - t_m) = \frac{\prod_{1 \leq i < j \leq n} \left( (a^{(m + 1)}_j)^2 - (a^{(m + 1)}_i)^2 \right)}{\prod^{n - 1}_{j = 0} (2j + 1)!} \prod^n_{j = 1} a^{(m + 1)}_j \\
  \times \det \left. \left( \frac{d^{2i- 1}}{dy^{2i - 1}} P^{\absorb}(a^{(m)}_j; y; t_{m + 1} - t_m) \right\rvert_{y = 0} \right)^n_{i, j = 1} (1 + \bigO(\max \lvert a^{(m + 1)}_j \rvert));
\end{multline}
and as both $a^{(0)}_1, \dotsc, a^{(0)}_n \to 0$ and $a^{(m+1)}_1, \dotsc, a^{(m+1)}_n \to 0$,
\begin{multline} \label{eq:abs_wall_total_trans}
  P^{\absorb}(A^{(0)}; A^{(m + 1)}; n; T) = \frac{\prod_{1 \leq i < j \leq n} \left( (a^{(0)}_j)^2 - (a^{(0)}_i)^2 \right) \left( (a^{(m + 1)}_j)^2 - (a^{(m + 1)}_i)^2 \right)}{\prod^{n - 1}_{j = 0} ((2j + 1)!)^2} \prod^n_{j = 1} a^{(0)}_j a^{(m + 1)}_j \\
  \times R^{\absorb}_n(T) (1 + \bigO(\max_{j = 1, \dotsc, n} (\lvert a^{(0)}_j \rvert, \lvert a^{(m + 1)}_j) \rvert)),
\end{multline}
where $R^{\absorb}$ is defined like $R^{\reflect}$ in \eqref{eq:R^ref_defn},
\begin{equation} \label{eq:R^abs_defn}
  \begin{split}
    R^{\absorb}_n(T) = {}& \det \left( \left. \frac{d^{2(j + k - 1)}}{dx^{2j - 1} y^{2k - 1}} P^{\absorb}(x; y; T) \right\rvert_{x = y = 0} \right)^n_{j, k = 1} \\
    = {}& \det \left( \left. \frac{d^{2(j + k - 1)}}{dx^{2(j + k - 1)}} P^{\reflect}(x; 0; T) \right\rvert_{x = 0} \right)^n_{j, k = 1} \\
    = {}& \det \left( \left. \frac{d^{2(j + k - 1)}}{dz^{2(j + k - 1)}} \left( e^{-\frac{n z^2}{2T}} \vartheta_3 \left( \frac{i\pi n z}{T}, e^{-\frac{2\pi^2 n}{T}} \right) \right) \right\rvert_{z = 0} \right)^n_{j, k = 1}.
  \end{split}
\end{equation}
Note that the $P^{\reflect}$ in  \eqref{eq:R^abs_defn} is not a typo.
Thus in the degenerate case, the joint probability density function of the particles at times $t_1 < \dotsb < t_m \in (0, T)$ is, analogous to \eqref{eq:jpdf_ref},
\begin{multline} \label{eq:jpdf_abs}
  P^{\absorb}(A^{(1)}; \dotsc; A^{(m)}; t_1, \dotsc, t_m) = \frac{1}{R^{\absorb}_n(T)} \det \left(\widetilde{ \phi}^{\absorb}_j(a^{(1)}_k) \right)^n_{j, k = 1} \det \left( \widetilde{\psi}^{\absorb}_j(a^{(m)}_k) \right)^n_{j, k = 1} \\
  \times \prod^m_{j = 2} P^{\absorb}(A^{(j - 1)}; A^{(j)}; t_{j + 1} - t_j),
\end{multline}
where
\begin{equation}
  \widetilde{\phi}^{\absorb}_j(a) = \left. \frac{d^{2j - 1}}{dx^{2j - 1}} P^{\absorb}(x; a; t_1) \right\rvert_{x = 0}, \quad \widetilde{\psi}^{\absorb}_j(a) = \left. \frac{d^{2j - 1}}{dy^{2j - 1}} P^{\absorb}(a; y; T - t_m) \right\rvert_{y = 0}.
\end{equation}

It is now clear that the model \NIBMabs\ is a determinantal process like \NIBMref. We derive its correlation kernel parallel to the derivation for \NIBMref. Like \eqref{eq:Fourier_of_P(x;y;t)}, 
\begin{equation} \label{eq:Fourier_of_P(x;y;t)_abs}
  \begin{split}
    P^{\absorb}(y; x; t) = P^{\absorb}(x; y; t) = {}& \frac{\sqrt{n}}{\sqrt{2\pi t}} \sum^{\infty}_{k = -\infty} e^{-\frac{n(y - x + 2k\pi)^2}{2t}} - e^{-\frac{n(y + x + 2k\pi)^2}{2t}} \\
    = {}& \frac{1}{\pi} \sum_{s \in L_n} e^{insx - \frac{ts^2n}{2}} \sin(nsy).
  \end{split}
\end{equation}
Then like \eqref{eq:derivative_of_P^ref}, for all $j = 0, 1, 2, \dotsc$
\begin{equation}
  \left. \frac{d^{2j + 1}}{dx^{2j + 1}} P^{\absorb}(x; a; t) \right\rvert_{x = 0} = \left. \frac{d^{2j + 1}}{dy^{2j + 1}} , \quad P^{\absorb}(a; y; t) \right\rvert_{y = 0} = \frac{i(ni)^{2j + 1}}{\pi} \sum_{s \in L_n} s^{2j + 1} e^{-\frac{n ts^2}{2}} \sin(nsa).
\end{equation}
Hence, letting $p^T_{n, 2k + 1}(x)$ be the discrete Gaussian orthogonal polynomial of degree $2k + 1$, we have,
\begin{align}
  \det \left( \widetilde{\phi}^{\absorb}_j(a^{(1)}_k) \right)^n_{j, k = 1} = {}& \frac{i^n (n i)^{n^2}}{\pi^n} \det \left( \phi^{\absorb}_j(a^{(1)}_k) \right)^n_{j, k = 1}, \\
  \det \left( \widetilde{\psi}^{\absorb}_j(a^{(m)}_k) \right)^n_{j, k = 1} = {}& \frac{i^n (n i)^{n^2}}{\pi^n} \det \left( \psi^{\absorb}_j(a^{(m)}_k) \right)^n_{j, k = 1}, 
\end{align}
where
\begin{align}
  \phi^{\absorb}_j(x) = {}& \sum_{s \in L_n} p^T_{n, 2j - 1}(s) e^{-\frac{n t_1 s^2}{2}} \sin(nsx) = 2 \sum^{\infty}_{k = 1} p^T_{n, 2j - 1} \left( \frac{k}{n} \right) e^{-\frac{t_1 k^2}{2n}} \sin(kx), \label{eq:phi_abs} \\
  \psi^{\absorb}_j(x) = {}& \sum_{s \in L_n} p^T_{n, 2j - 1}(s) e^{-\frac{n (T - t_m) s^2}{2}} \sin(nsx) = 2 \sum^{\infty}_{k = 1} p^T_{n, 2j - 1} \left( \frac{k}{n} \right) e^{-\frac{(T - t_m) k^2}{2n}} \sin(kx), \label{eq:psi_abs}
\end{align}
and analogous to \eqref{eq:R^ref_in_p} and \eqref{eq:R^ref_orthogonality}, we use \eqref{eq:R^abs_defn} and \eqref{eq:Fourier_of_P(x;y;t)} (but not \eqref{eq:Fourier_of_P(x;y;t)_abs})
\begin{equation} \label{eq:R^abs_in_p}
  \begin{split}
    R^{\absorb}_n(T) = {}& \frac{(ni)^{2n^2}}{\pi^n} \det \left( \sum_{s \in L_n} s^{2j - 1} s^{2k - 1} e^{-\frac{nT s^2}{2}} \right)^n_{j, k = 1} \\
    = {}& \frac{(-1)^n n^{n(2n + 1)}}{\pi^n} \det \left( \frac{1}{n} \sum_{s \in L_n} p^T_{n, 2j - 1}(s) p^T_{n, 2k - 1}(s) e^{-\frac{nT s^2}{2}} \right)^n_{j, k = 1} \\
    = {}& \frac{(-1)^n n^{n(2n + 1)}}{\pi^n} \prod^{n - 1}_{j = 0} h^T_{n, 2j + 1}.
  \end{split}
\end{equation}

Now we apply the Eynard--Mehta theorem. Define for $j = 1, \dotsc, n$, $\phi^{\absorb}_j(x)$ and $\psi^{\absorb}_j(x)$ as in \eqref{eq:phi_abs} and \eqref{eq:psi_abs}, and for $k = 1, \dotsc, m - 1$
\begin{equation} \label{eq:W_l(x,y)_kernel_abs}
  W^{\absorb}_k(x, y) := P^{\absorb}(x; y; t_{k + 1} - t_k).
\end{equation}
Then we define $\Phi^{\absorb}$, $\Psi^{\absorb}$, $W^{\absorb}_{[i, j)}$, $\W^{\absorb}_{[i, j)}$, and $M^{\absorb}$ by \eqref{eq:defn_Phi}, \eqref{eq:defn_Psi}, \eqref{eq:defn_W_ij_and_circ_W_ij}, and \eqref{eq:defn_M_compact} respectively, with $\xxx = \absorb$. All the arguments between \eqref{eq:W_l(x,y)_kernel} and \eqref{eq:gneral_formula_E_M_auxiliary} remain valid, and the correlation kernel is given by the Eynard--Mehta formula \eqref{eq:general_formula_E_M}, with $\xxx = \absorb$.

To find the explicit expression of the correlation kernel, we define the Fourier expansion of a function $g \in L^2[0, \pi]$ over the basis $\sin x, \sin 2x, \sin 3x, \dotsc$, as
\begin{equation}
  g(x) = \sum^{\infty}_{k = 1} \hat{g}(k) \sin(kx), \quad \text{where} \quad \hat{g}(k) = \frac{2}{\pi} \int^{\pi}_0 g(x) \sin(kx) dx.
\end{equation}
Note that the Fourier coefficients of an $L^2[0, \pi]$ function is defined differently from those in \eqref{eq:Fourer_coeff_even} in Section \ref{subsec_reflecting_wbc}, since the orthogonal basis is changed. The Fourier expansion of $W^{\absorb}_j(x, y)$ with respect to $y$ is given in \eqref{eq:Fourier_of_P(x;y;t)_abs}. Then we have
\begin{equation}
  (W^{\absorb}_j g)(x) = \int^{\pi}_0 W^{\absorb}_j(x, y) g(y) dy = \sum^{\infty}_{k = 1} e^{=\frac{(t_{j + 1} - t_j)k^2}{2n}} \hat{g}(k) \sin(kx),
\end{equation}
or equivalently
\begin{equation} \label{eq:W_j_in_dual_space_abs}
  \widehat{W^{\absorb}_j g}(k) = e^{=\frac{(t_{j + 1} - t_j)k^2}{2n}} \hat{g}(k), \quad \text{for all $k = 1, 2, 3, \dotsc$.}
\end{equation}
By the definition of $W^{\absorb}_{[i, j)}$ in \eqref{eq:defn_W_ij_and_circ_W_ij} and using \eqref{eq:W_j_in_dual_space_abs} successively, we have
\begin{equation} \label{eq:W_ij_in_dual_space_abs}
  \widehat{W^{\absorb}_{[i, j)} g}(k) = e^{-\frac{(t_j - t_i)k^2}{2n}} \hat{g}(k), \quad \text{for all $k = 1, 2, 3, \dotsc$.}
\end{equation}
Hence by the Poisson summation formula, for $i < j$,
\begin{equation} \label{eq:W_circ_and_W_abs}
  \begin{split}
    \W^{\absorb}_{[i, j)}(x, y) = W^{\absorb}_{[i, j)}(x, y) = {}& \frac{2}{\pi} \sum^{\infty}_{k = 1} \sin(kx) e^{-\frac{(t_j - t_i)k^2}{2n}} \sin(ky) \\
    = {}& \frac{1}{2\pi} \sum^{\infty}_{k = -\infty} \left( e^{-ikx - \frac{(t_j - t_i)k^2}{2n}} - e^{ikx - \frac{(t_j - t_i)k^2}{2n}} \right) e^{iky} \\
    = {}& \frac{\sqrt{n}}{\sqrt{2\pi t}} \sum^{\infty}_{k = -\infty} \left( e^{-\frac{n(y - x + 2k\pi)^2}{2(t_j - t_i)}} - e^{-\frac{n(y + x + 2k\pi)^2}{2(t_j - t_i)}} \right).
  \end{split}
\end{equation}
As a specialization of \eqref{eq:W_ij_in_dual_space_abs}, consider $W^{\absorb}_{[i, m)} \Psi^{\absorb}$, an operator from $\ell^2(n)$ to $L^2[0, \pi]$. It is represented by an $n$ dimensional row vector, whose $l$-th component has the Fourier coefficients
\begin{equation}
  \widehat{(W^{\absorb}_{[i, m)} \Psi^{\absorb})_l}(k) = 2 p^T_{n, 2l - 1}\left( \frac{k}{n} \right) e^{-\frac{(T - t_i)k^2}{2n}},
\end{equation}
and then we have the formula for the $l$-th component
\begin{equation} \label{eq:formula_WPsi_abs}
  \begin{split}
    (W^{\absorb}_{[i, m)} \Psi^{\absorb})_l(x) = {}& \int^{\pi}_0 W^{\absorb}_{[i, m)}(x, y) \psi^{\absorb}_l(y) dy \\
    = {}& 2 \sum^{\infty}_{k = 1} p^T_{n, 2l - 1} \left( \frac{k}{n} \right) e^{-\frac{(T - t_i) k^2}{2n}} \sin(kx) \\
    = {}& -i \sum_{s \in L_n} p^T_{n, 2l - 1}(s) e^{-\frac{n(T - t_i)s^2}{2}} e^{insx}.
  \end{split}
\end{equation}
Similarly, $\Phi^{\absorb} W^{\absorb}_{[1, j)}$ is an operator from $L^2[0, \pi]$ to $\ell^2(n)$, and is represented by an $n$ dimensional column vector. Its $l$-th component is
\begin{equation} \label{eq:formula_PhiW_abs}
  \begin{split}
    (\Phi^{\absorb} W^{\absorb}_{[1, j)})(x) = {}& \int^{\pi}_0 \phi^{\absorb}_l(y) W^{\absorb}_{[1, j)}(y, x) dy = \int^{\pi}_0 W^{\absorb}_{[1, j)}(x, y) \phi^{\absorb}_l(y) dy \\
    = {}& 2 \sum^{\infty}_{k = 1} p^T_{n, 2l - 1} \left( \frac{k}{n} \right) e^{-\frac{t_j k^2}{2n}} \sin(kx) \\
    = {}& i\sum_{s \in L_n} p^T_{n, 2l - 1}(s) e^{-\frac{n t_j s^2}{2}} e^{-insx}.
  \end{split}
\end{equation}
Hence the $(i, j)$ entry of the matrix $M^{\absorb}$ defined in \eqref{eq:defn_entries_M} is
\begin{equation}
  \begin{split}
    M^{\absorb}_{ij} = {}& \int^{\pi}_0 \int^{\pi}_0 \phi^{\absorb}_i(x) W^{\absorb}_{[1, m)}(x, y) \psi^{\absorb}_j(y) dy dx \\
    = {}& \int^{\pi}_0 \phi^{\absorb}_i(x) (W^{\absorb}_{[1, m)}\psi^{\absorb}_j)(x) dx \\
    = {}& \int^{\pi}_0 \left( 2 \sum^{\infty}_{k = 1} p^T_{n, 2i - 1} \left( \frac{k}{n} \right) e^{-\frac{t_1 k^2}{2n}} \sin(kx) \right) \left( 2 \sum^{\infty}_{k = 1} p^T_{n, 2j - 1} \left( \frac{k}{n} \right) e^{-\frac{(T - t_m) k^2}{2n}} \sin(kx) \right) dx.
  \end{split}
\end{equation}
By the orthogonality of the basis $1, \cos x, \cos 2x, \dotsc$, we have
\begin{equation}
  M^{\absorb}_{ij} = \pi \left( 2 \sum^{\infty}_{k = 1} p^T_{n, 2i - 1} \left( \frac{k}{n} \right) p^T_{n, 2j - 1} \left( \frac{k}{n} \right) e^{-\frac{T k^2}{2n}} \right) = \pi \sum_{s \in L_n} p^T_{n, 2i - 1}(s) p^T_{n, 2j - 1}(s) e^{-\frac{nT s^2}{2}}.
\end{equation}
By the orthogonality of $p^T_{n, 2i - 1}$ and $p^T_{n, 2j - 1}$ defined in \eqref{eq:defn_disc_Gaussian_poly}, we have
\begin{equation} \label{eq:entries_M_abs}
  M^{\absorb}_{ij} =
  \begin{cases}
    0 & \text{if $i \neq j$}, \\
    \pi n h^T_{n, 2i - 1} & \text{if $i = j$}.
  \end{cases}
\end{equation}
Thus by plugging \eqref{eq:formula_WPsi_abs}, \eqref{eq:formula_PhiW_abs}, and \eqref{eq:entries_M_abs} into \eqref{eq:general_formula_E_M} and \eqref{eq:gneral_formula_E_M_auxiliary}, we obtain Proposition \ref{prop:op_formulas}\ref{enu:prop_finite_n_abs}.

%have that the correlation kernel for \NIBMabs\ is
%\begin{equation} \label{eq:kernel_abs}
%  K^{\absorb}_{t_i, t_j}(x, y; n, T) = \widetilde{K}^{\absorb}_{t_i, t_j}(x, y; n, T) + \W^{\absorb}_{[i, j)}(x, y),
%\end{equation}
%where, with $S_k(x; n, t)$ defined in \eqref{eq:S_ka_defn1},
%\begin{equation} \label{eq:tilde_K_full_expansion_abs}
%  \begin{split}
%    \widetilde{K}^{\absorb}_{t_i, t_j}(x, y; n, T) = {}& \frac{1}{\pi n} \sum^{n - 1}_{k = 0} \frac{1}{h^T_{n, 2k + 1}} \left( \sum_{s \in L_n} p^T_{n, 2k + 1}(s) e^{-\frac{n(T - t_i)s^2}{2}} e^{insx} \right) \left( \sum_{s \in L_n} p^T_{n, 2k + 1}(s) e^{-\frac{n t_j s^2}{2}} e^{-insy} \right) \\
%    = {}& \frac{n}{\pi} \sum^{n - 1}_{k = 0} \frac{1}{h^T_{n, 2k + 1, T-t_i}} S_{2k+1, T - t_i}(x; n, T)S_{2k+1, t_j}(-y; n, T).
%      \end{split}
%\end{equation}

\section{Relation to nonintersecting Brownian motions on the unit circle and proof of Theorem \ref{thm:main}} \label{sec:asymptotics}

The correlation kernels for \NIBMref\  and \NIBMabs\  given in Proposition \ref{prop:op_formulas} are closely related to the correlation kernel for the nonintersecting Brownian motions on the unit circle, which is studied in \cite{Liechty-Wang14-2} and denoted \NIBMT\ there. The model  \NIBMT\ is a model of $n$ nonintersecting Brownian motions on the unit circle, all of which start at the point $e^{i\cdot 0}$ at time $t=0$ and return to the same point at time $t=T$. For that model we have the following formulas for the correlation kernel.

\begin{prop} \cite[Formulas (117), (131) and (133)]{Liechty-Wang14-2} \label{enu:prop:circle_a}
  The \NIBMT\ with $n$ particles is a determinantal process, with the multi-time correlation kernel
  \begin{equation}
    K_{t_i, t_j}(x, y; n, T) = \widetilde{K}_{t_i, t_j}(x, y; n, T) - \W_{[i, j)}(x, y),
  \end{equation}
  where
  \begin{equation} \label{eq:W_circ_and_W_circle}
    \W_{[i, j)}(x, y) =
    \begin{cases}
      \frac{1}{2\pi} \sum_{s \in L_n} e^{-\frac{(t_j - t_i)ns^2}{2} - in(y - x)s} & \text{if $t_j > t_i$}, \\
      0 & \text{otherwise},
    \end{cases}
  \end{equation}
  and
  \begin{equation}
    \widetilde{K}_{t_i, t_j}(x, y; n, T) = \frac{n}{2\pi} \sum^{n - 1}_{k = 0} \frac{1}{h^T_{n, k}} S_{k, T - t_i}(x; n, T) S_{k, t_j}(-y; n, T),
  \end{equation}
  such that $h^T_{n, k}$ and $S_{k, a}(x; n, T)$ are defined in \eqref{eq:defn_disc_Gaussian_poly} and \eqref{eq:S_ka_defn1} respectively.
\end{prop}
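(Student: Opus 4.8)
The plan is to run the same pipeline we use in the proof of Proposition~\ref{prop:op_formulas} --- Karlin--McGregor, confluent limit, Eynard--Mehta --- but with periodic rather than reflecting or absorbing boundary conditions. First I would record the transition density for a single Brownian motion on the circle $\realR/2\pi\intZ$ with diffusion parameter $\sg = n^{-1/2}$. Either by the method of images or directly as a Fourier series it equals $\frac{\sqrt{n}}{\sqrt{2\pi t}} \sum_{k \in \intZ} e^{-n(y - x + 2k\pi)^2/(2t)} = \frac{1}{2\pi} \sum_{s \in L_n} e^{-nts^2/2} e^{ins(x - y)}$, the second form following from the Poisson summation formula and exhibiting the transition operator as Fourier multiplication by $e^{-tk^2/(2n)}$ on the basis $\{e^{ikx}\}_{k \in \intZ}$, with $L_n$ as in \eqref{def:lattice}. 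By the Karlin--McGregor theorem \cite{Karlin-McGregor59} the density for $n$ nonintersecting copies starting at $A$ and ending at $B$ after time $t$ is $\det(P(a_i; b_j; t))_{i,j=1}^n$.

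Next I would take the confluent limit $a^{(0)}_i, a^{(m+1)}_i \to 0$. Unlike the wall cases, the function $P(x; y; t)$ and its $x$-derivatives at $x = 0$ have no parity in $x$ for generic $y$, so all degrees $k = 0, 1, \dots, n-1$ of the discrete Gaussian orthogonal polynomials $p^T_{n,k}$ enter, and the relevant determinant degenerates with a genuine Vandermonde prefactor $\prod_{i<j}(a^{(0)}_j - a^{(0)}_i)$ rather than its squared analogue. A Taylor expansion in the coalescing variables together with l'H\^opital's rule then produces a joint density of Eynard--Mehta form, $\frac{1}{R_n(T)} \det(\phi_j(a^{(1)}_k)) \det(\psi_j(a^{(m)}_k)) \prod_j W_j$, where $\phi_j$ and $\psi_j$ become, after replacing monomials by the $p^T_{n,k}$, the functions $S_{k, t_1}$ and $S_{k, T - t_m}$ of \eqref{eq:S_ka_defn1}, and $R_n(T)$ is a Hankel moment determinant $\det(\partial_z^{j+k-2} P(z; 0; T)|_{z=0})_{j,k=1}^n$ which, by the discrete Gaussian orthogonality \eqref{eq:defn_disc_Gaussian_poly}, collapses to $\prod_{j=0}^{n-1} h^T_{n,j}$ up to explicit powers of $n$ and $\pi$. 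Applying the Eynard--Mehta theorem \cite{Eynard-Mehta98} in the form \eqref{eq:general_formula_E_M}--\eqref{eq:gneral_formula_E_M_auxiliary}, it remains to evaluate $W_{[i,m)}\Psi$, $\Phi W_{[1,j)}$, and $M = \Phi W_{[1,m)}\Psi$ in the Fourier basis: the multiplier identity gives $\widehat{W_{[i,j)} f}(k) = e^{-(t_j - t_i)k^2/(2n)}\hat f(k)$, hence $\W_{[i,j)}(x,y) = \frac{1}{2\pi}\sum_{s \in L_n} e^{-(t_j - t_i)ns^2/2}\,e^{ins(x-y)}$ as claimed; the $l$-th components of $W_{[i,m)}\Psi$ and $\Phi W_{[1,j)}$ are $S_{l-1, T-t_i}$ and $S_{l-1, t_j}$; and $M$ is diagonal with $M_{ll}$ proportional to $h^T_{n,l-1}$ by orthogonality. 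Substituting these expressions into \eqref{eq:gneral_formula_E_M_auxiliary} yields the stated formula for $\widetilde K_{t_i,t_j}$.

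The step I expect to be the main obstacle is the confluent limit: one must verify that the nonintersecting ensemble with distinct but coalescing start/end points has a well-defined limit, and that this limit is correctly computed by the formal l'H\^opital procedure, which requires careful bookkeeping of the Vandermonde prefactors and of the factorial denominators so that they cancel exactly against the matching factors inside $R_n(T)$. A secondary point requiring care is that the natural basis on the circle is the full exponential system $\{e^{ikx}\}_{k \in \intZ}$, which alters the normalizing constants relative to the cosine and sine computations in the proof of Proposition~\ref{prop:op_formulas}, and that here polynomials of every degree enter, so the summation index runs over $k = 0, \dots, n-1$ instead of being restricted to even or odd values.
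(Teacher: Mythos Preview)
The paper does not actually prove this proposition: it is stated as a direct citation of \cite[Formulas (117), (131) and (133)]{Liechty-Wang14-2}, and the only remark made is that the kernel there carries an additional phase parameter $\tau$ which is set to $0$ here. So there is no proof in the present paper to compare against.

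That said, your outline is correct and is exactly the argument carried out in \cite{Liechty-Wang14-2}, and it parallels step by step the proof of Proposition~\ref{prop:op_formulas} in this paper. The transition density you write is correct, the Fourier-multiplier identity $\widehat{W_{[i,j)} f}(k) = e^{-(t_j - t_i)k^2/(2n)}\hat f(k)$ is the periodic analogue of \eqref{eq:W_j_in_dual_space} and \eqref{eq:W_j_in_dual_space_abs}, and the diagonalization of $M$ via the orthogonality \eqref{eq:defn_disc_Gaussian_poly} goes through unchanged. Your observation that the confluent limit in the periodic case produces a genuine Vandermonde $\prod_{i<j}(a_j - a_i)$ (rather than the squared factor in the wall cases) and hence involves all degrees $k = 0, \dotsc, n-1$ is exactly the structural difference between the circle model and the wall models, and explains why the sum in $\widetilde K$ runs over all $k$ rather than only even or odd $k$.
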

We remark that in \cite{Liechty-Wang14-2}, the correlation kernel is stated with a phase factor $\tau$, and here we only need to $\tau = 0$ case. In \cite{Liechty-Wang14-2} it is proven that the kernel $K_{t_i, t_j}(x, y; n, T)$ converges to the Pearcey and tacnode kernels in certain scaling limits. This result is essential for our proof of Theorem \ref{thm:main} and we repeat it below.

\begin{prop} \label{prop:circle}
  The \NIBMT\ is a determinantal process. The multi-time correlation kernel, $K_{t_i, t_j}(x, y; n, T)$, has the following properties:
  \begin{enumerate}[label=(\alph*)]
  \item \cite[Theorem 1.3(a)]{Liechty-Wang14-2} \label{enu:prop:circle_b}
    Assume $T > \pi^2$, and let $n \to \infty$. The \NIBMT\ converges to the Pearcey process at the time around $t_c$ that is given in \eqref{eq:T_c_in_introduction} and position around $\pi$. To be precise, let $d$ be the positive constant given in \eqref{eq:d_defn}, and
    \begin{equation}
      t_i = 2t^c + \frac{d^2}{n^{1/2}} \tau_i, \quad t_j = 2t^c + \frac{d^2}{n^{1/2}} \tau_j, \quad x = \pi - \frac{d}{n^{3/4}} \xi, \quad y = \pi - \frac{d}{n^{3/4}} \eta,
    \end{equation}
    the correlation kernel has the limit
    \begin{equation}
      \lim_{n \to \infty} K_{t_i, t_j}(x, y; n, T) \left\lvert \frac{dy}{d\eta} \right\rvert = K^{\Pearcey}_{-\tau_j, -\tau_i}(\eta, \xi).
    \end{equation}
  \item \cite[Theorem 1.3(b)]{Liechty-Wang14-2} \label{enu:prop:circle_c}
    Assume $T$ is close to $\pi^2$ and let $n \to \infty$. The \NIBMT\ converges to the tacnode process at the time around $T/2$ and position around $\pi$. To be precise, let $\sigma \in \realR$, 
    \begin{equation}
     T = \pi^2(1 - 2^{-2/3} \sigma n^{-2/3}), \quad d = 2^{-5/3} \pi,
    \end{equation}
    and
    \begin{equation}
      t_i = \frac{T}{2} + \frac{d^2}{n^{1/3}} \tau_i, \quad t_j = \frac{T}{2} + \frac{d^2}{n^{1/3}} \tau_j, \quad x = \pi - \frac{d}{n^{2/3}} \xi, \quad y = \pi - \frac{d}{n^{2/3}} \eta.
    \end{equation}
    The correlation kernel has the limit
    \begin{equation}
      \lim_{n \to \infty} K_{t_i, t_j}(x, y; n, T) \left\lvert \frac{dy}{d\eta} \right\rvert = K^{\tac}_{\tau_i, \tau_j}(\xi, \eta; \sigma) = K^{\tac}_{-\tau_j, -\tau_i}(\eta, \xi; \sigma).
    \end{equation}
  \end{enumerate}
\end{prop}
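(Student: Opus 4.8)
The plan I would follow (and which underlies the cited \cite[Theorem 1.3]{Liechty-Wang14-2}) is a Riemann--Hilbert (RH) steepest descent analysis of the discrete Gaussian orthogonal polynomials $p^T_{n,k}$. The first step is to rewrite the finite-$n$ kernel of Proposition \ref{enu:prop:circle_a} in terms of the $2\times 2$ RH matrix $Y(z)$ associated with the orthogonality \eqref{eq:defn_disc_Gaussian_poly} (in the Fokas--Its--Kitaev sense, adapted to the discrete lattice). The functions $S_{k,a}(x;n,T)$ of \eqref{eq:S_ka_defn1} are discrete Fourier transforms of $p^T_{n,k}$ against a Gaussian, hence (up to elementary prefactors) evaluations of the entries of $Y$ and of its dual; a Christoffel--Darboux-type identity, obtained by summation by parts against the three-term recurrence, collapses the sum $\sum_{k=0}^{n-1}h^T_{n,k}{}^{-1}\,S_{k,T-t_i}(x)\,S_{k,t_j}(-y)$ into a bilinear form in $Y$ evaluated at a dual pair of points determined by $x,y,t_i,t_j$. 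Thus the asymptotics of $K_{t_i,t_j}$ reduce to the asymptotics of $Y$.

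The second step is the equilibrium problem. Since $L_n$ has mesh $1/n$, the weight $e^{-nTs^2/2}$ on $L_n$ leads to a \emph{constrained} equilibrium problem: minimize the logarithmic energy with quadratic external field proportional to $T$, subject to the upper constraint $\mu'\le 1$ forced by the discreteness. The value $T=\pi^2$ is precisely the threshold at which this constraint first becomes active: for $T<\pi^2$ the equilibrium measure sits on a single band, is semicircle-like, and never saturates; for $T>\pi^2$ a saturated region appears, and under the Fourier duality built into $S_{k,a}$ the exceptional spatial point $x=\pi$ is where the front of the particle system reaches the forbidden region at time $t^c$; at $T=\pi^2$ that region has degenerated to the single point $x=\pi$ at the midpoint time $T/2$. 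Building the $g$-function from this measure and running the standard chain of transformations $Y\to T\to S\to R$ (normalization at infinity, opening of lenses, a semicircle-type global parametrix), one is left with $R=I+o(1)$ uniformly away from the critical point $x=\pi$.

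The third step, where the two limiting kernels and the real difficulty appear, is the local analysis near $x=\pi$. In the supercritical case, as $t\to t^c$ three saddle points of the relevant phase coalesce, so the Airy parametrix is replaced by a \emph{Pearcey} parametrix built from the quartic integral \eqref{eq:int_formula_Pearcey}; inserting it into the collapsed kernel formula and applying the scaling $x=\pi-d\,n^{-3/4}\xi$, $t=2t^c+d^2 n^{-1/2}\tau$ yields $K^{\Pearcey}$, the argument order $(-\tau_j,-\tau_i)$, $(\eta,\xi)$ arising because $x,y$ enter through a dual pair and $T-t_i,t_j$ play symmetric roles. In the critical case $T\approx\pi^2$ one needs instead the \emph{tacnode} parametrix: the local model RH problem whose solution is expressed through the Hastings--McLeod solution $q$ of PII \eqref{cr2}--\eqref{cr3} and the Flaschka--Newell function $\mathbf \Psi$ of \eqref{cr4}--\eqref{cr5}, with the tuning $T=\pi^2(1-2^{-2/3}\sigma n^{-2/3})$ arranged so that $\sigma$ becomes the PII variable; matching it to the global parametrix and applying $x=\pi-d\,n^{-2/3}\xi$, $t=\tfrac{T}{2}+d^2 n^{-1/3}\tau$ yields $K^{\tac}$ (via \eqref{eq:essential_tacnode}). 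The main obstacle is exactly the construction, unique solvability, and matching of the tacnode parametrix: one must show the PII-based local model has a solution with the prescribed jumps and decay, that it extends across a full disk around $x=\pi$, and that it agrees with the semicircle global parametrix on the boundary circle up to an error $O(n^{-c})$ — this is the technical heart of \cite{Liechty-Wang14-2} and is what makes the critical case substantially more delicate than the supercritical one. Finally, the two equivalent forms of the limiting kernels in part \ref{enu:prop:circle_c}, and the argument reversal in part \ref{enu:prop:circle_b}, follow at once from the symmetry of the integral representations \eqref{eq:int_formula_Pearcey} and \eqref{eq:essential_tacnode} under the interchange of the two integration variables.
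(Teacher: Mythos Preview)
Your outline is a faithful high-level sketch of the Riemann--Hilbert steepest descent analysis carried out in \cite{Liechty-Wang14-2}, but note that the present paper does \emph{not} prove this proposition at all: it is quoted verbatim as \cite[Theorem 1.3(a),(b)]{Liechty-Wang14-2}, and the only commentary the paper adds is the sentence after the statement explaining that the form given here differs slightly from the original because the periodicity of \NIBMT\ in the spatial variable has been used (so that $-y$ can be replaced by $2\pi - y$, which is what is needed in Lemma \ref{lem:kernels_relation}). There is therefore no ``paper's own proof'' to compare against.

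As a summary of the actual proof in \cite{Liechty-Wang14-2}, your plan is accurate in its broad strokes: the kernel is expressed via the Fokas--Its--Kitaev RH problem for the discrete Gaussian polynomials, a Christoffel--Darboux-type reduction collapses the sum, the $g$-function comes from a constrained equilibrium problem whose saturation threshold is $T=\pi^2$, and the local parametrix at $x=\pi$ is Pearcey in the supercritical case and tacnode (built from the Flaschka--Newell $\mathbf\Psi$) in the critical case. Two minor caveats: (i) the Pearcey singularity in the equilibrium-measure picture is more naturally described as a cusp (two band edges meeting) rather than as ``three saddle points coalescing'', though the quartic model problem is the same; (ii) the symmetry $K^{\tac}_{\tau_i,\tau_j}(\xi,\eta;\sigma)=K^{\tac}_{-\tau_j,-\tau_i}(\eta,\xi;\sigma)$ does follow from the integral representation, but it requires not only the swap $u\leftrightarrow v$ but also the reflection $u\mapsto -u$, $v\mapsto -v$ together with the identity $f(-\zeta;s)=-g(\zeta;s)$ of \eqref{rank3}, so ``at once'' is a slight overstatement.
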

Our Proposition \ref{prop:circle}\ref{enu:prop:circle_b} and \ref{enu:prop:circle_c} are stated slightly different from \cite[Theorem 1.3]{Liechty-Wang14-2}, since we make use of the periodicity of the \NIBMT. 
\begin{rmk}\label{rem:T_difference}
The process \NIBMT\ takes place on the circle of radius 1, or equivalently the interval $(-\pi, \pi]$ with periodic boundary conditions, whereas the processes \NIBMref\  and \NIBMabs\ are defined on the interval $[0,\pi]$. Therefore the critical time which separates the subcritical and supercritical regimes is $T=\pi^2$ for \NIBMT, compared with $T=T_c=\pi^2/2$ for \NIBMref\  and \NIBMabs. Similarly, the time $t^c$ at which the Pearcey process occurs in this paper differs from the time $t^c$ in \cite{Liechty-Wang14-2} by a factor of 2.
\end{rmk}
%The explicit formula for the kernel $K_{t_i, t_j}(x, y; n, T)$ is given in Proposition \ref{enu:prop:circle_a} in Section \ref{sec:asymptotics}.

From the formulas presented in Propositions \ref{prop:op_formulas} and \ref{enu:prop:circle_a} we see that the kernels $K_{t_i, t_j}^{\reflect}(x,y; n, T)$ and $K_{t_i, t_j}^{\absorb}(x,y; n, T)$ are closely related to the kernel $K_{t_i, t_j}(x,y; n, T)$. We state this relation in the following lemma.
\begin{lem}\label{lem:kernels_relation}
The correlations kernels for the processes  \NIBMref\  and \NIBMabs\ are related to the correlation kernel for the process \NIBMT in the following explicit way.
\begin{align}
  K_{t_i, t_j}^{\reflect}(x,y; n, T) = {}& K_{2t_i, 2t_j}(x,y; 2n, 2T)+K_{2t_i, 2t_j}(x,-y; 2n, 2T) \notag \\
  = {}& K_{2t_i, 2t_j}(x,y; 2n, 2T)+K_{2t_i, 2t_j}(x, 2\pi - y; 2n, 2T), \label{eq:even_part} \\
  K_{t_i, t_j}^{\absorb}(x,y; n, T) = {}& K_{2t_i, 2t_j}(x,y; 2n, 2T)-K_{2t_i, 2t_j}(x,-y; 2n, 2T) \notag \\
  = {}& K_{2t_i, 2t_j}(x,y; 2n, 2T)-K_{2t_i, 2t_j}(x, 2\pi - y; 2n, 2T). \label{eq:odd_part}
\end{align}
\end{lem}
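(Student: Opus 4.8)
The plan is to read both families of kernels off their orthogonal-polynomial representations --- Proposition~\ref{prop:op_formulas} for \NIBMref\ and \NIBMabs, Proposition~\ref{enu:prop:circle_a} for \NIBMT\ --- and to match them through the elementary scaling identity
\[
  p^{2T}_{2n,k}(u) = 2^{-k}\, p^{T}_{n,k}(2u), \qquad h^{2T}_{2n,k} = 2^{-2k-1}\, h^{T}_{n,k},
\]
which I would establish first. Writing a point of $L_{2n}$ as $m/(2n)$ with $m\in\intZ$ and setting $v = m/n \in L_n$, the orthogonality relation \eqref{eq:defn_disc_Gaussian_poly} for the pair $(2n,2T)$ becomes $\tfrac{1}{2n}\sum_{v\in L_n} p^{2T}_{2n,j}(v/2)\, p^{2T}_{2n,k}(v/2)\, e^{-nT v^2/2} = h^{2T}_{2n,j}\delta_{jk}$, so that $v \mapsto 2^{k} p^{2T}_{2n,k}(v/2)$ is a monic degree-$k$ polynomial orthogonal with respect to the $(n,T)$ weight (the normalizing factors $\tfrac{1}{2n}$ versus $\tfrac1n$ being harmless); uniqueness of monic orthogonal polynomials forces it to equal $p^{T}_{n,k}(v)$, and comparing the squared norms gives the relation between the $h$'s.

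From \eqref{eq:S_ka_defn1} and this identity, the same substitution $s = m/(2n) \leftrightarrow v = m/n$ gives, for every $z$ and every $a>0$,
\[
  S_{k,2a}(z;2n,2T) = 2^{-k-1}\, S_{k,a}(z;n,T),
\]
while the parity of $p^{T}_{n,k}$ (even for $k$ even, odd for $k$ odd) together with $s \mapsto -s$ gives the reflection rule $S_{k,a}(-z;n,T) = (-1)^{k} S_{k,a}(z;n,T)$. Plugging both into the formula for $\widetilde K_{2t_i,2t_j}$ of Proposition~\ref{enu:prop:circle_a}, and using $2T-2t_i = 2(T-t_i)$, all powers of $2$ cancel:
\[
  \widetilde K_{2t_i,2t_j}(x,\pm y;2n,2T) = \frac{n}{2\pi}\sum_{k=0}^{2n-1}\frac{(\pm 1)^{k}}{h^{T}_{n,k}}\, S_{k,T-t_i}(x;n,T)\, S_{k,t_j}(-y;n,T).
\]
Splitting this sum according to the parity of $k$ and comparing with \eqref{eq:tilde_K_full_expansion} and \eqref{eq:tilde_K_full_expansion_abs} then yields
\[
  \widetilde K_{2t_i,2t_j}(x,y;2n,2T) \pm \widetilde K_{2t_i,2t_j}(x,-y;2n,2T) =
  \begin{cases}
    \widetilde K^{\reflect}_{t_i,t_j}(x,y;n,T), & +\text{ sign},\\[2pt]
    \widetilde K^{\absorb}_{t_i,t_j}(x,y;n,T), & -\text{ sign}.
  \end{cases}
\]

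It remains to handle the $\W$-terms. When $t_j>t_i$, substituting $s=m/(2n)$ into the circle formula \eqref{eq:W_circ_and_W_circle} taken with $2n$ particles at times $2t_i,2t_j$ gives $\W_{[i,j)}(x,y) = \tfrac{1}{2\pi}\sum_{m\in\intZ} e^{-(t_j-t_i)m^2/(2n) + im(x-y)}$; comparing with the Fourier-series forms of $\W^{\reflect}_{[i,j)}$ and $\W^{\absorb}_{[i,j)}$ computed in Section~\ref{algebraic} (and using that this sum is invariant under $m\mapsto -m$) shows that $\W_{[i,j)}(x,y)\pm\W_{[i,j)}(x,-y)$, with the $(2n,2T)$ data, equals $\W^{\reflect}_{[i,j)}(x,y)$ for the $+$ sign and $\W^{\absorb}_{[i,j)}(x,y)$ for the $-$ sign, while for $t_j\le t_i$ every $\W$-term involved vanishes. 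Since $K=\widetilde K-\W$, combining with the previous display gives the first equality on each line of the Lemma. Finally, all the quantities on the circle side are built from $e^{imy}$ with $m\in\intZ$ and hence are $2\pi$-periodic in $y$, so $K_{2t_i,2t_j}(x,-y;2n,2T) = K_{2t_i,2t_j}(x,2\pi - y;2n,2T)$, giving the second equality.

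The only delicate point --- and the main obstacle, modest as it is --- is keeping the powers of $2$ straight in the first two displays: the lattice change $L_{2n}\to L_n$, the dilation of the polynomial argument, and the simultaneous rescalings $T\mapsto 2T$ and $a\mapsto 2a$ (equivalently $t_i\mapsto 2t_i$) all have to be tracked at once. Everything else is Poisson summation and Fourier bookkeeping of exactly the kind already performed in Section~\ref{algebraic}.
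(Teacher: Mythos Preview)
Your proof is correct and follows essentially the same approach as the paper's: establish the scaling identities $p^{T}_{n,k}(x)=2^k p^{2T}_{2n,k}(x/2)$ and $h^T_{n,k}=2^{2k+1}h^{2T}_{2n,k}$, deduce the corresponding relation for $S_{k,a}$, split $\widetilde K_{2t_i,2t_j}(x,\pm y;2n,2T)$ by the parity of $k$ to recover $\widetilde K^{\reflect}$ and $\widetilde K^{\absorb}$, handle the $\W$-terms via their Fourier series, and invoke $2\pi$-periodicity in $y$. The paper states the scaling relations \eqref{eq:OPs_rescaling} and \eqref{eq:S_k_rescaling} without derivation, whereas you supply the short uniqueness argument, but otherwise the arguments coincide.
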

In other words, after rescaling time and the number of particles by a factor of 2, the kernels for \NIBMref\  and \NIBMabs\ are precisely the even and odd parts, respectively, of the kernel for \NIBMT.
Inserting the scalings described in parts \ref{enu:thm:main_a} and \ref{enu:thm:main_b} of Theorem \ref{thm:main} into both sides of \eqref{eq:even_part} and \eqref{eq:odd_part}, and applying the asymptotics given in Proposition \ref{prop:circle}, it is immediate to obtain the result of Theorem \ref{thm:main}, noting also that the Pearcey kernel is a symmetric function of its two spatial variables. Therefore the only thing which remains in the proof of Theorem \ref{thm:main} is to prove Lemma \ref{lem:kernels_relation}, which we do below.

\begin{proof}[Proof of Lemma \ref{lem:kernels_relation}]
Recall that the function $S_{k, a}(x; n, T)$ is defined in \eqref{eq:S_ka_defn1} by the Gaussian discrete orthogonal polynomials $p^T_{n, k}(x)$ and their inner product $h^T_{n, k}$, which are defined by the orthogonality \eqref{eq:defn_disc_Gaussian_poly}. From their definition, we have
\begin{equation} \label{eq:OPs_rescaling}
  p^T_{n, k}(x) = (-1)^k p^T_{n, k}(-x), \quad p_{n,k}^T(x)=2^k p_{2n, k}^{2T}(x/2), \quad \text{and} \quad h^T_{n,k}=2^{2k+1}h^{2T}_{2n, k},
\end{equation}
and then 
\begin{equation} \label{eq:S_k_rescaling}
  S_{k,a}(-x; n, T) =
  \begin{cases}
    S_{k,a}(x; n, T) & \text{if $k$ is even}, \\
    -S_{k,a}(x; n, T) & \text{if $k$ is odd},
  \end{cases}
  \quad \text{and} \quad S_{k,a}(x; n, T)= 2^{k+1}S_{k, 2a}(x; 2n, 2T).
\end{equation}
and then can rewrite \eqref{eq:tilde_K_full_expansion} and \eqref{eq:tilde_K_full_expansion_abs} into
\begin{align}
  \widetilde{K}_{t_i, t_j}^{\reflect}(x,y; n, T) = {}& \frac{2n}{\pi} \sum^{n - 1}_{k = 0} \frac{1}{ h^{2T}_{2n, 2k}} S_{2k, 2(T - t_i)}(x; 2n, 2T) S_{2k, 2t_j}(-y; 2n, 2T) \notag \\
  = {}& \widetilde{K}_{2t_i, 2t_j}(x,y; 2n, 2T)+\widetilde{K}_{2t_i, 2t_j}(x,-y; 2n, 2T), \label{eq:even_part_rec} \\
  \widetilde{K}_{t_i, t_j}^{\absorb}(x,y; n, T) = {}& \frac{2n}{\pi} \sum^{n - 1}_{k = 0} \frac{1}{h^{2T}_{2n, 2k + 1}}S_{2k+1, 2(T - t_i)}(x; 2n, 2T) S_{2k+1, 2t_j}(-y; 2n, 2T) \notag \\
  = {}& \widetilde{K}_{2t_i, 2t_j}(x,y; 2n, 2T)-\widetilde{K}_{2t_i, 2t_j}(x,-y; 2n, 2T). \label{eq:odd_part_rec}
\end{align}
On the other hand, comparing the formulas \eqref{eq:W_circ_and_W} for $\W^{\reflect}_{[i, j)}(x, y)$ and \eqref{eq:W_circ_and_W_abs} for $\W^{\absorb}_{[i, j)}(x, y)$ with \eqref{eq:W_circ_and_W_circle} for $\W_{[i, j)}(x, y)$, we have (recalling that they depend on parameters $t_i$, $t_j$ and $n$)
\begin{align}
  \W^{\reflect}_{[i, j)}(x, y) = {}& \W_{[i, j)}(x, y) \bigg\rvert_{\substack{n \mapsto 2n, \\ t_i \mapsto 2t_i, t_j \mapsto 2t_j}} +  \W_{[i, j)}(x, -y) \bigg\rvert_{\substack{n \mapsto 2n, \\ t_i \mapsto 2t_i, t_j \mapsto 2t_j}}, \\ 
  \W^{\absorb}_{[i, j)}(x, y) = {}& \W_{[i, j)}(x, y) \bigg\rvert_{\substack{n \mapsto 2n, \\ t_i \mapsto 2t_i, t_j \mapsto 2t_j}} -  \W_{[i, j)}(x, -y) \bigg\rvert_{\substack{n \mapsto 2n, \\ t_i \mapsto 2t_i, t_j \mapsto 2t_j}}.
\end{align}
Furthermore, the kernel $K_{s, t}(x, y; n, T)$ is periodic with respect to both $x$ and $y$, with period $2\pi$. Thus we obtain \eqref{eq:even_part_rec} and \eqref{eq:odd_part_rec}.

%conclude that
%\begin{align}
%  K_{t_i, t_j}^{\reflect}(x,y; n, T) = {}& K_{2t_i, 2t_j}(x,y; 2n, 2T)+K_{2t_i, 2t_j}(x,-y; 2n, 2T) \notag \\
%  = {}& K_{2t_i, 2t_j}(x,y; 2n, 2T)+K_{2t_i, 2t_j}(x, 2\pi - y; 2n, 2T), \label{eq:even_part} \\
%  K_{t_i, t_j}^{\absorb}(x,y; n, T) = {}& K_{2t_i, 2t_j}(x,y; 2n, 2T)-K_{2t_i, 2t_j}(x,-y; 2n, 2T) \notag \\
%  = {}& K_{2t_i, 2t_j}(x,y; 2n, 2T)-K_{2t_i, 2t_j}(x, 2\pi - y; 2n, 2T). \label{eq:odd_part}
%\end{align}

\end{proof}

\section{Rank 1 property of the kernels $\widetilde{K}^{\tac, \even}$ and $\widetilde{K}^{\tac, \odd}$}\label{sec:rank1}

In this section we prove that the kernels $\widetilde{K}^{\tac, \even}$ and $\widetilde{K}^{\tac, \odd}$ satisfy the property that their $\sg$-derivative is a rank 1 kernel, and find triple integral representations for them.  Namely we have the following proposition, which is an important part of the proof of Proposition \ref{lem:tacnode_relation} but may also be of independent interest.
\begin{prop}\label{prop:triple_int}
  The kernels $\widetilde{K}^{\tac, \even}$ and $\widetilde{K}^{\tac, \odd}$ have the following triple integral representations:
  \begin{align}
    \widetilde{K}^{\tac, \even}_{s, t}(\xi,\eta; \sg) = {}& \frac{1}{4\pi^2} \int_\sg^\infty \,d\widetilde{\sg}\int_{\Sg_T}\,du\int_{\Sg_T}\, dv\, \frac{e^{\frac{su^2}{2} - iu \xi}}{e^{\frac{tv^2}{2}-iv\eta}} \big(f(u; \widetilde{\sg})+g(u; \widetilde{\sg})\big)\big(f(v; \widetilde{\sg})+g(v; \widetilde{\sg})\big), \label{rank7a} \\
    \widetilde{K}^{\tac, \odd}_{s, t}(\xi,\eta; \sg) = {}& \frac{1}{4\pi^2} \int_\sg^\infty \,d\widetilde{\sg}\int_{\Sg_T}\,du\int_{\Sg_T}\, dv\, \frac{e^{\frac{su^2}{2} - iu \xi}}{e^{\frac{tv^2}{2}-iv\eta}} \big(f(u; \widetilde{\sg})-g(u; \widetilde{\sg})\big) \big(f(v; \widetilde{\sg})-g(v; \widetilde{\sg})\big), \label{rank8a}
  \end{align}
  where the functions $f(u; {\sg})$ and $g(u; {\sg})$ are defined in \eqref{tac18}.
\end{prop}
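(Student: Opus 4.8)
The plan is to establish the rank-one structure of the $\sg$-derivatives of $\widetilde K^{\tac,\even}$ and $\widetilde K^{\tac,\odd}$ (the content of Section \ref{sec:rank1}) and then recover the kernels themselves by integrating from $\sg$ up to $+\infty$. All of the $\sg$-dependence of $\widetilde K^{\tac,\even/\odd}$ sits in $f(\cdot;\sg)$ and $g(\cdot;\sg)$, which are built from $\mathbf\Psi(\z;\sg)$, so the first step is to differentiate using the second member of the Flaschka--Newell Lax pair (the one whose compatibility with \eqref{cr4} gives \eqref{equiv2}, normalized by \eqref{cr5}), namely $\partial_\sg\mathbf\Psi(\z;\sg)=\left(\begin{smallmatrix}-i\z & q(\sg)\\ q(\sg)& i\z\end{smallmatrix}\right)\mathbf\Psi(\z;\sg)$. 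Reading off the columns entering \eqref{tac18}, and checking that the cases $\Im u>0$ and $\Im u<0$ give the same answer, yields $\partial_\sg f(u;\sg)=-iu\,f(u;\sg)+q(\sg)\,g(u;\sg)$ and $\partial_\sg g(u;\sg)=q(\sg)\,f(u;\sg)+iu\,g(u;\sg)$. The key identity is then a Christoffel--Darboux--type cancellation, $\partial_\sg\bigl(f(u;\sg)g(v;\sg)-g(u;\sg)f(v;\sg)\bigr)=-i(u-v)\bigl(f(u;\sg)g(v;\sg)+g(u;\sg)f(v;\sg)\bigr)$, so after dividing by $u-v$ the $\sg$-derivative of the pre-kernel is the regular symmetric combination $-i\bigl(f(u;\sg)g(v;\sg)+g(u;\sg)f(v;\sg)\bigr)$. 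Differentiating under the integral in \eqref{eq:essential_tacnode} --- legitimate since the integrand decays along $\Sg_T$ exactly as in the convergence of \eqref{eq:essential_tacnode} --- then represents $\partial_\sg\widetilde K^{\tac,\even/\odd}_{s,t}(\xi,\eta;\sg)$ as a double contour integral with integrand $e^{su^2/2-tv^2/2}\bigl(f(u;\sg)g(v;\sg)+g(u;\sg)f(v;\sg)\bigr)e^{-iu\xi}$ times $\cos(v\eta)$, respectively $\sin(v\eta)$.

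To collapse this rank-two symmetric integrand to rank one I would invoke the reflection symmetry of the Flaschka--Newell solution: the coefficient matrix in \eqref{cr4} satisfies $A(-\z)=-\sigma_1 A(\z)\sigma_1$ with $\sigma_1=\left(\begin{smallmatrix}0&1\\1&0\end{smallmatrix}\right)$, and uniqueness of the normalized solution forces $\mathbf\Psi(-\z;\sg)=\sigma_1\mathbf\Psi(\z;\sg)\sigma_1$ (a symmetry already used in \cite{Liechty-Wang14-2}), which through \eqref{tac18} becomes $f(-u;\sg)=-g(u;\sg)$, $g(-u;\sg)=-f(u;\sg)$. Since $v\mapsto-v$ carries $\Sg_T$ onto itself, interchanging its two components with their orientations, one has $\int_{\Sg_T}h(v)\,dv=-\int_{\Sg_T}h(-v)\,dv$. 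Writing $2\cos(v\eta)=e^{iv\eta}+e^{-iv\eta}$ (resp.\ $2i\sin(v\eta)=e^{iv\eta}-e^{-iv\eta}$) and applying $v\mapsto-v$ to the $e^{-iv\eta}$ term turns $f(u)g(v)+g(u)f(v)$ into $f(u)f(v)+g(u)g(v)$ up to a sign, and adding the two pieces produces the perfect square $\bigl(f(u;\sg)+g(u;\sg)\bigr)\bigl(f(v;\sg)+g(v;\sg)\bigr)$ in the even case and $\bigl(f(u;\sg)-g(u;\sg)\bigr)\bigl(f(v;\sg)-g(v;\sg)\bigr)$ in the odd case. Tracking constants, $\partial_\sg\widetilde K^{\tac,\even}_{s,t}(\xi,\eta;\sg)=-\tfrac{1}{4\pi^2}\bigl(\int_{\Sg_T}e^{su^2/2-iu\xi}(f+g)(u;\sg)\,du\bigr)\bigl(\int_{\Sg_T}e^{-tv^2/2+iv\eta}(f+g)(v;\sg)\,dv\bigr)$ and similarly for $\widetilde K^{\tac,\odd}$ with $f-g$; these are manifestly rank one.

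To integrate back up I need the boundary value $\widetilde K^{\tac,\even/\odd}_{s,t}(\xi,\eta;\sg)\to 0$ as $\sg\to+\infty$. This should follow from the Hastings--McLeod asymptotics $q(\sg)\to 0$ (super-exponentially, cf.\ \eqref{cr3}): as $\sg\to+\infty$ the matrix $\mathbf\Psi$ degenerates to the diagonal solution of the $q\equiv 0$ system, and an estimate of the exponential factors of $\mathbf\Psi$ along $\Sg_T$ shows that the pre-kernel $\bigl(f(u;\sg)g(v;\sg)-g(u;\sg)f(v;\sg)\bigr)/(u-v)$ tends to $0$ on $\Sg_T\times\Sg_T$ monotonically enough to pass the limit inside the contour integrals. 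The fundamental theorem of calculus then gives $\widetilde K^{\tac,\even}_{s,t}(\xi,\eta;\sg)=-\int_{\sg}^{\infty}\partial_{\widetilde\sg}\widetilde K^{\tac,\even}_{s,t}(\xi,\eta;\widetilde\sg)\,d\widetilde\sg$, and inserting the rank-one form of the integrand produces \eqref{rank7a}, with \eqref{rank8a} following identically in the odd case. (Equivalently, one may first integrate the pre-kernel identity $\bigl(f(u;\sg)g(v;\sg)-g(u;\sg)f(v;\sg)\bigr)/(u-v)=i\int_\sg^\infty\bigl(f(u;\widetilde\sg)g(v;\widetilde\sg)+g(u;\widetilde\sg)f(v;\widetilde\sg)\bigr)\,d\widetilde\sg$ and then use Fubini, which requires only the pointwise vanishing of the pre-kernel and absolute convergence of the resulting triple integral.) I expect the one genuinely delicate point to be exactly this last step --- controlling $f$ and $g$ on the non-compact contour $\Sg_T$ as $\sg\to+\infty$, and rigorously interchanging the limit (and, in the Fubini variant, the $\widetilde\sg$-integration) with the contour integrations; the algebra of the preceding two paragraphs is essentially forced once the Lax pair and the reflection symmetry are available.
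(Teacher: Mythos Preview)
Your proposal is correct and uses the same three ingredients as the paper's proof: the $\sg$-evolution of $f,g$ coming from the second Lax equation \eqref{equiv2}, the reflection symmetry $f(-u;\sg)=-g(u;\sg)$ (the paper's \eqref{rank3}), and the vanishing of the kernel as $\sg\to+\infty$. The only difference is the order of operations: the paper first applies the substitution $v\mapsto -v$ to the $e^{-iv\eta}$ piece of the even/odd kernel, obtaining a combined integrand with denominator $u^2-v^2$ (their \eqref{rank4}), and \emph{then} differentiates that combined numerator in $\sg$ to produce the rank-one expression \eqref{rank6}; you instead differentiate the original pre-kernel first, landing on the symmetric combination $f(u)g(v)+g(u)f(v)$, and only afterwards use the reflection symmetry to collapse it to a product. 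Since the symmetrization and the $\sg$-differentiation commute, the two routes are equivalent and the algebra is of comparable length.

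On the one point you flag as delicate: the paper does not carry out the $\sg\to+\infty$ estimate from scratch either, but simply invokes \cite[Lemma 5.2]{Liechty-Wang16} for the uniform exponential decay of $f(\zeta;\sg)$ and $g(\zeta;\sg)$ along $\Sg_T$, which justifies both the vanishing boundary term and the interchange of limits. So your assessment of where the analytic work lies is accurate, and the reference supplies exactly what is needed.
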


\begin{rmk} For the odd tacnode process a similar result appeared very recently in \cite[Proposition 2.8]{Ferrari-Veto16}.
\end{rmk}

Since the functions $f(u; {\sg})$ and $g(u; {\sg})$ are defined by the Hastings--McLeod solution $\mathbf{\Psi}(\zeta; s)$ in \eqref{cr4} and \eqref{cr5}, we recall some properties of $\mathbf{\Psi}(\zeta; s)$ before proceeding with the proof of Proposition \ref{prop:triple_int}.
First, $\mathbf{\Psi}(\zeta; s)$ satisfies the differential equation
\begin{equation}\label{equiv2}
  \frac{\d}{\d s} \mathbf \Psi(\z; s) =
  \begin{pmatrix}
    -i\z & q(s) \\
    q(s) & i\z
  \end{pmatrix}
  \mathbf \Psi(\z; s),
\end{equation}
where $q(s)$ is the Hastings--McLeod solution to the Painlev\'{e} II equation. Equation \eqref{equiv2} is the second differential equation of the Flaschka--Newell Lax pair (the first one is \eqref{cr4}). Then for the functions $f(\zeta; s)$ and $g(\zeta; s)$ defined in \eqref{tac18}, we have
\begin{equation}\label{rank5}
  \frac{\d}{\d s}f(\zeta; s)=-i\zeta f(\zeta; s)+q(s)g(\zeta; s),\qquad \frac{\d}{\d s}g(\zeta; s)=q(s)f(\zeta; s)+i\zeta g(\zeta; s).
\end{equation}
We also note that the uniqueness of the boundary value problem \eqref{cr4} and \eqref{cr5} implies
\begin{equation} \label{eq:symmetry_of_H-ML_solution}
  \Psi_{i, j}(-\zeta; s) = \Psi_{3 - i, 3 - j}(\zeta; s), \quad i, j = 1, 2, 
\end{equation}
so for the functions $f(\zeta; s)$ and $g(\zeta; s)$ defined in \eqref{tac18}, we have
\begin{equation}\label{rank3}
  f(-\zeta; s)=-g(\zeta; s).
\end{equation}

\begin{proof}[Proof of Proposition \ref{prop:triple_int}]
  We prove \eqref{rank7a} in detail, and the proof of \eqref{rank8a} is analogous and omitted.
  
  The kernel $\widetilde{K}^{\tac, \even}_{s, t}(\xi,\eta; \sg)$ can be written as
  \begin{multline}\label{rank1}
    \widetilde{K}^{\tac, \even}_{s, t}(\xi,\eta; \sg) = \frac{1}{2\pi} \int_{\Sg_T}\,du\int_{\Sg_T}\, dv\, e^{\frac{su^2}{2}-\frac{tv^2}{2}} \frac{f(u; \sg)g(v; \sg)-g(u; \sg)f(v; \sg)}{2\pi i(u-v)} e^{-i(u\xi-v\eta)} \\
    + \frac{1}{2\pi} \int_{\Sg_T}\,du\int_{\Sg_T}\, dv\, e^{\frac{su^2}{2}-\frac{tv^2}{2}} \frac{f(u; \sg)g(v; \sg)-g(u; \sg)f(v; \sg)}{2\pi i(u-v)} e^{-i(u\xi+v\eta)}.
  \end{multline}
  In the second integral, we make the change of variable $v\mapsto (-v)$, and make use of \eqref{rank3} to change $f(-v; \sigma)$ and $g(-v; \sigma)$ into $-g(v; \sigma)$ and $-f(v; \sigma)$ respectively. Notice that the contour $\Sg_T$ is invariant under this transformation, and so we obtain
\begin{equation}\label{rank4}
  \begin{aligned}
    \widetilde{K}^{\tac, \even}_{s, t}(\xi,\eta; \sg)&=\frac{1}{4\pi} \int_{\Sg_T}\,du\int_{\Sg_T}\, dv\, \frac{e^{\frac{su^2}{2} - iu \xi}}{e^{\frac{tv^2}{2}-iv\eta}} \bigg[\frac{f(u; \sg)g(v; \sg)-g(u; \sg)f(v; \sg)}{2\pi i(u-v)}  \\
    &\hspace{7cm}+ \frac{f(u; \sg)f(v; \sg)-g(u; \sg)g(v; \sg)}{2\pi i(u+v)} \bigg] \\
    &=\frac{1}{4\pi} \int_{\Sg_T}\,du\int_{\Sg_T}\, dv\, \frac{e^{\frac{su^2}{2} - iu \xi}}{e^{\frac{tv^2}{2}-iv\eta}} \\
    &\quad\times \frac{(u+v)(f(u; \sg)g(v; \sg)-g(u; \sg)f(v; \sg))+(u-v)(f(u; \sg)f(v; \sg)-g(u; \sg)g(v; \sg))}{2\pi i(u^2-v^2)}.  \\
  \end{aligned}
\end{equation}
By \eqref{rank5}, we have
\begin{multline}\label{rank6}
  \frac{\d}{\d\sg} \left[ \frac{(u+v)(f(u; \sg)g(v; \sg)-g(u; \sg)f(v; \sg))+(u-v)(f(u; \sg)f(v; \sg)-g(u; \sg)g(v; \sg))}{2\pi i(u^2-v^2)}\right]= \\
  -\frac{(f(u; \sg)+g(u; \sg))(f(v; \sg)+g(v; \sg))}{2\pi}.
\end{multline}
This derivative identity, together with the property that $f(\zeta; s)$ and $g(\zeta; s)$ vanish exponentially fast as $s \to +\infty$ uniformly for all $\zeta \in \Sigma_T$, implies \eqref{rank7a}.

The vanishing property of $f(\zeta; s)$ and $g(\zeta; s)$ is implied by \cite[Lemma 5.2]{Liechty-Wang16}, see \cite[Formula (5.40)]{Liechty-Wang16}. We note that this vanishing property was used implicitly also in \cite[Formula (346)]{Liechty-Wang14-2}.
\end{proof}

\section{The Schlesinger transformation of a Lax pair associated to \Painleve\ II} \label{sec:M_nu_defn}

In the paper \cite{Delvaux13a}, a family of hard-edge tacnode kernels is derived from the nonintersecting (squared) Bessel processes. The kernels are given in terms of a $4 \times 4$ Riemann--Hilbert problem associated to the nonhomogeneous \Painleve\ II (PII) equation. Equivalently, they can be uniquely determined by a $4\times 4$ Lax pair associated to the same solution to PII. In this section we give the definition of the Lax pair, following \cite{Delvaux13a}, and then derive the Schlesinger transformation formula for the Lax pair which preserves the Hastings--McLeod solutions to PII. This transformation is a key ingredient in the proofs of Proposition \ref{lem:tacnode_relation} and Theorem \ref{thm:half-integer}. The kernels will be defined and discussed in Section \ref{sec:kernels}.

\subsection{Definition of $M_{\nu}$ by Lax pair}
 
 Recall the general PII equation \eqref{PII} and the Hastings--McLeod solution defined in \eqref{eq:PII_BC}. Throughout this section we  denote the Hastings--McLeod solution to PII as $q_\nu(\zeta)$. We also define the related Hamiltonian, following \cite[Formula (21)]{Delvaux13a} \footnote{We note that $K(\zeta; q, p) = \frac{1}{2}(p^2 - \zeta q^2 - q^4 - \nu q)$ is a Hamiltonian for PII, in the sense that $\frac{\partial K}{\partial p} = q'$ and $-\frac{\partial K}{\partial q} = p'$ implies the PII equation for $q$.}
\begin{equation} \label{eq:defn_u_nu}
  u_\nu(\zeta)=q_\nu'(\zeta)^2 - \zeta q_\nu(\zeta)^2-q_\nu(\zeta)^4 + 2\nu q_\nu(\zeta).
\end{equation}

To state the Lax pair, we introduce variables $s, \tau$, and let
\begin{equation} \label{eq:sigma_relation}
  \sg=2^{2/3}(2s-\tau^2).
\end{equation}
Following the notational convention in \cite[Formula (23)]{Delvaux13a}, we define the following quantities in terms of $s$ and $\tau$:
\begin{align}
  c = {}& -2^{-1/3} u_{\nu}(\sigma) + s^2, \quad d = 2^{-1/3} q_{\nu}(\sigma), \\
  g + a = {}& -c^2 + d^2 + s = 2^{-2/3} (q_{\nu}(\sigma)^2 - u_{\nu}(\sigma)^2) + 2^{2/3} s^2 u_{\nu}(\sigma) - s^4 + s, \\
  b - h = {}& 2\tau d = 2^{2/3} \tau q_{\nu}(\sigma), \\
  b + h = {}& \frac{1}{2\tau} \frac{\partial d}{\partial \tau} + 2cd = -2^{1/3}(q'_{\nu}(\sigma) + q_{\nu}(\sigma) u_{\nu}(\sigma)) + 2^{2/3} s^2 q_{\nu}(\sigma). \label{eq:notation_b+h}
\end{align}
Then for $\nu>-1/2$, we define the $4 \times 4$ matrix-valued function $M_\nu(z; s, \tau)$, for $z \in \compC \setminus (-\infty, 0]$ and $s, \tau$ in a neighbourhood of $\realR$, as the solution to the differential equation
\begin{equation} \label{eq:diff_for_M_nu}
  \frac{\partial}{\partial z} M_\nu(z; s, \tau)=U_\nu M_\nu(z; s, \tau), \quad U_{\nu} =
  \begin{pmatrix}
    -c + \tau & d + \frac{\nu}{z} & i & 0 \\
    -d + \frac{\nu}{z} & c - \tau & 0 & i \\
    -i(-z + g + a + s) & -i(b + h) & c + \tau & d - \frac{\nu}{z} \\
    -i(b + h) & -i(z + g + a + s) & -d - \frac{\nu}{z} & -c - \tau
  \end{pmatrix},
\end{equation}
which satisfies the asymptotics as $z\to+\infty$ in the sector $-\pi/12 < \arg z < 7\pi/12$,
\begin{multline} \label{eq:asy_M_nu}
  M_\nu(z) = \left( I + \frac{M_{\nu, 1}}{z} + \bigO \left( \frac{1}{z^2} \right) \right) \diag \left( (-z)^{-1/4}, z^{-1/4}, (-z)^{1/4}, z^{1/4} \right) \\
  \times \A \diag \left( e^{-\theta_1(z) + \tau z}, e^{-\theta_2(z) - \tau z}, e^{\theta_1(z) + \tau z}, e^{\theta_2(z) - \tau z} \right),
\end{multline}
where $M_{\nu, 1}$ is a constant $4 \times 4$ matrix whose explicit value we are not interested in, all power functions are taken as the principal branch, and 
\begin{equation}
  \A = \frac{1}{\sqrt{2}}
  \begin{pmatrix}
    1 & 0 & -i & 0 \\
    0 & 1 & 0 & i \\
    -i & 0 & 1 & 0 \\
    0 & i & 0 & 1
  \end{pmatrix}, \quad
  \theta_1(z) = \frac{2}{3} (-z)^{3/2} + 2s(-z)^{1/2}, \quad \theta_2(z) = \frac{2}{3} z^{3/2} + 2s z^{1/2}.
\end{equation}
The Lax pair consists of \eqref{eq:diff_for_M_nu} together with another differential equation for $M_{\nu}(z; s, \tau)$ with respect to $s$:
\begin{equation} \label{eq:Lax_V_part}
  \frac{\partial}{\partial s}M_\nu(z; s, \tau)=V_{\nu} M_\nu(z; s, \tau), \quad V_{\nu} = 2
  \begin{pmatrix}
    c & d & -i & 0 \\
    d & c & 0 & i \\
    i(-z + g + a) & i(b - h) & -c & -d \\
    i(h - b) & -i(z + g + a) & -d & -c
  \end{pmatrix}.
\end{equation}
$M_{\nu}(z; s, \tau)$ also satisfies a differential equation with respect to $\tau$, but we omit it here, see \cite[Proposition 5]{Delvaux13a}.

We note that there are other Lax pairs associated to the PII equation, like the Flaschka--Newell and the Jimbo--Miwa Lax pairs which are $2 \times 2$ \cite[Section 4.2]{Fokas-Its-Kapaev-Novokshenov06}.

In \cite{Delvaux13a}, $M_{\nu}$ is defined first by a Riemann--Hilbert problem \cite[RH Problem 1]{Delvaux13a}, with notation $M$, and our $M_{\nu}$ is the solution to the Riemann--Hilbert problem in the sector $0 < \arg z < \varphi_1$ that is part of the first quadrant, see \cite[Formula (9)]{Delvaux13a}. The asymptotics \eqref{eq:asy_M_nu} is part of the Riemann--Hilbert problem. The existence of $M_{\nu}$ on $\compC \setminus (-\infty, 0]$, which is equivalent to the solvability of the Riemann--Hilbert problem, is established in \cite[Theorem 2]{Delvaux13a}.
It is shown that $M_{\nu}$ defined by the Rieman--Hilbert problem also satisfies the differential equation \eqref{eq:diff_for_M_nu} in \cite[Propositions 3 and 4]{Delvaux13a}. The entries of $U_{\nu}$ are given in \cite[Fomulas (24), (25), (149), (150), (166)]{Delvaux13a}. By the theory of differential equations, for example \cite[Chapters IV and V]{Wasow87}, the solution to \eqref{eq:diff_for_M_nu} that satisfies \eqref{eq:asy_M_nu} is unique if it exists.

In the homogeneous case $\nu = 0$, the Riemann--Hilbert problem for $M_{\nu}$ as well as the Lax pair were first defined and analysed in \cite{Delvaux-Kuijlaars-Zhang11}. Later it was found that entries of $\left. M_\nu \right\rvert_{\nu = 0}$ have Airy resolvent formulas, see \cite{Delvaux13, Kuijlaars14}. It was also shown in \cite{Liechty-Wang16} that the entries of $\left. M_\nu \right\rvert_{\nu = 0}$ have integral representations in terms with the solution to the  $2 \times 2$ Flaschka--Newell Lax pair associated to the Hastings--McLeod solution to the homogeneous PII equation. But for the general $\nu \neq 0$ case, analogous results are not known.

\subsection{Schlesinger transformation of the Lax pair} \label{subsec:Backlund_trans}

% The $\nu = 0$ case of the Lax pair \eqref{eq:diff_for_M_nu} --- \eqref{eq:Lax_V_part} are better understood, while the general $\nu \neq 0$ case has more technical difficulties. In our paper, we need both the $\nu = 0$ and $\nu = 1$ cases for the reflecting wall and absorbing wall respectively, so it is desirable to find a relation between the $\nu = 1$ and $\nu = 0$ cases of the Lax pair. Below we show that it can be achieved by the classical \Backlund\ transformation.

The Hastings--McLeod solutions $q_\nu(\sg)$ and $q_{\nu+1}(\sg)$ to PII are related by the following \Backlund\ transformation (see \cite[Section 6.1]{Fokas-Its-Kapaev-Novokshenov06}, noting that $q_{-\nu}(\sg) = -q_{\nu}(\sg)$):
\begin{equation} \label{eq:Backlund_q}
  q_{\nu+1}(\sg) + q_\nu(\sg) = \frac{2\nu+1}{2q_\nu(\sg)^2 - 2q_\nu'(\sg) + \sg} = u_{\nu+1}(\sg) - u_\nu(\sg).
\end{equation}
The first identity can be verified by checking that the function
\begin{equation}
  -q_\nu(\sg) + \frac{2\nu+1}{2q_\nu(\sg)^2 - 2q_\nu'(\sg) + \sg},
\end{equation}
satisfies PII with parameter $\nu+1$, and has the correct asymptotics at both $\sg = +\infty$ and $\sg = -\infty$. Then the second identity is an exercise. Accordingly, the Lax pairs associated to the Hastings--McLeod solutions should have corresponding Schlesinger transformations. The Schlesinger transformation for the $2 \times 2$ Flaschka--Newell Lax pair is well known, see \cite[Section 6.1]{Fokas-Its-Kapaev-Novokshenov06}. In this section we derive the Schlesinger transformation for the $4 \times 4$ Lax pair given by \eqref{eq:diff_for_M_nu}--\eqref{eq:Lax_V_part}.

\begin{prop}\label{Schlesinger}
  The matrix-valued functions $M_\nu(z)$ and $M_{\nu+1}(z)$ satisfy the relation
  \begin{equation}\label{Schles_def}
    M_{\nu+1}(z;s,\tau)= \left(I + \frac{\alpha_\nu}{z} R_\nu\right)\Sg M_\nu(z;s,\tau)\Sg\,,
  \end{equation}
  where 
  \begin{equation} \label{eq:formula_R_nu}
    R_\nu :=
    \begin{pmatrix}
      \beta_\nu & \beta_\nu & -i & i \\
      -\beta_\nu & -\beta_\nu & i & -i \\
      i\gamma_\nu & i\gamma_\nu &  -\delta_\nu & \delta_\nu \\
      i\gamma_\nu & i\gamma_\nu & -\delta_\nu & \delta_\nu
    \end{pmatrix},
    \qquad \Sg= \diag(1, -1, 1, -1),
  \end{equation}
  and, using notations in \eqref{eq:sigma_relation} -- \eqref{eq:notation_b+h}, and with $q_{\nu} = q_{\nu}(\sigma)$ and $u_{\nu} = u_{\nu}(\sigma)$,
  \begin{align}
    \alpha_\nu = {}& 2^{-1/3}(q_{\nu + 1}(\sigma) + q_{\nu}(\sigma)) = 2^{-1/3}(u_{\nu + 1}(\sigma) - u_{\nu}(\sigma)) = \frac{2^{-1/3}(2\nu+1)}{2q_\nu^2-2q_\nu'+2^{5/3}s-2^{2/3}\tau^2}, \label{Backlund} \\
    \beta_\nu = {}& c - d + \tau = s^2+\tau - 2^{-1/3}(q_\nu+u_\nu), \label{abcd_def} \\
    \gamma_\nu = {}& b + h + g + a + s = 2^{-2/3}(q_\nu^2-u_\nu^2)+2^{2/3}s^2(q_\nu+u_\nu)-2^{1/3}(q_\nu u_\nu+q_\nu')+2s-s^4, \\
    \delta_\nu = {}& c - d - \tau = s^2-\tau - 2^{-1/3}(q_\nu+u_\nu).
  \end{align}
\end{prop}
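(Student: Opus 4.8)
The plan is to check directly that the function on the right of \eqref{Schles_def},
\[
  \widetilde M(z) \;:=\; \bigl(I + \tfrac{\alpha_\nu}{z}R_\nu\bigr)\,\Sg\, M_\nu(z;s,\tau)\,\Sg \;=:\; T(z)\,\Sg M_\nu(z)\,\Sg,
\]
solves the Lax equation \eqref{eq:diff_for_M_nu} and obeys the normalization \eqref{eq:asy_M_nu} with $\nu$ replaced by $\nu+1$; by the uniqueness of such a solution recorded after \eqref{eq:asy_M_nu}, this forces $\widetilde M = M_{\nu+1}$. A preliminary observation is that the matrix $\Sg=\diag(1,-1,1,-1)$ of \eqref{eq:formula_R_nu} satisfies $\Sg^2=I$ and commutes with each of the matrices $\A$, $\diag((-z)^{-1/4},z^{-1/4},(-z)^{1/4},z^{1/4})$ and $\diag(e^{-\theta_1+\tau z},e^{-\theta_2-\tau z},e^{\theta_1+\tau z},e^{\theta_2-\tau z})$ that occur in \eqref{eq:asy_M_nu}. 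Since moreover $T(z)=I+\bigO(z^{-1})$, conjugating the asymptotics of $M_\nu$ by $\Sg$ and multiplying by $T$ on the left reproduces exactly the form \eqref{eq:asy_M_nu} for the parameter $\nu+1$, the (irrelevant) subleading coefficient being $M_{\nu+1,1}=\alpha_\nu R_\nu+\Sg M_{\nu,1}\Sg$. This settles the normalization.

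For the differential equation, since $\Sg$ is constant and $\Sg^2=I$ one has $\partial_z(\Sg M_\nu\Sg)=(\Sg U_\nu\Sg)(\Sg M_\nu\Sg)$, so $\partial_z\widetilde M=U_{\nu+1}\widetilde M$ is equivalent to the rational-matrix identity
\[
  \partial_z T(z) \;=\; U_{\nu+1}\,T(z)\;-\;T(z)\,\Sg\, U_\nu\,\Sg .
\]
I would prove this by writing $U_\nu=Az+B_\nu+\nu z^{-1}C$, where the $z$-linear part $A$ and the matrix $C$ in the $z^{-1}$-part are independent of $\nu$, while $B_\nu$ collects the remaining entries and depends on $\nu$ through $c,d,g+a,b+h$. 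Quick computations give $\Sg A\Sg=A$, $\Sg C\Sg=-C$, and, from the structure of $R_\nu$ (its first two columns coincide, its last two columns are opposite, its first two rows are opposite, its last two rows coincide), $R_\nu C=R_\nu$ and $CR_\nu=-R_\nu$. Substituting $T=I+\alpha_\nu z^{-1}R_\nu$ and matching coefficients of $z^1,z^0,z^{-1},z^{-2}$: the $z^1$-terms cancel trivially; the $z^{-2}$-identity is $-\alpha_\nu(\nu+1)R_\nu=-(\nu+1)\alpha_\nu R_\nu$, automatic from $R_\nu C=R_\nu$ and $CR_\nu=-R_\nu$; and there remain two genuine identities,
\begin{align*}
  B_{\nu+1}-\Sg B_\nu\Sg &= \alpha_\nu\,[R_\nu,A], \\
  \alpha_\nu\bigl(R_\nu\,\Sg B_\nu\Sg-B_{\nu+1}R_\nu\bigr) &= (2\nu+1)\,C .
\end{align*}

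Both of these I would verify entrywise from the explicit formulas for $\alpha_\nu,\beta_\nu,\gamma_\nu,\delta_\nu$ and for $c,d,g+a,b+h$ in \eqref{eq:sigma_relation}--\eqref{eq:notation_b+h}. The only analytic ingredients are the \Backlund\ relation \eqref{eq:Backlund_q} (equivalently $c_{\nu+1}-c_\nu=-(d_{\nu+1}+d_\nu)=-\alpha_\nu$ together with $u_{\nu+1}-u_\nu=q_{\nu+1}+q_\nu$) and the identity $u_\nu'(\sg)=-q_\nu(\sg)^2$, which is immediate on differentiating \eqref{eq:defn_u_nu} and using \eqref{PII}; combining the two and differentiating \eqref{eq:Backlund_q} in $\sg$ yields $q_\nu(\sg)^2-q_{\nu+1}(\sg)^2=q_{\nu+1}'(\sg)+q_\nu'(\sg)$, which is what the $(3,1)$- and $(3,2)$-type entries need. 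The first identity is comparatively short; the second is the bulk of the work, and the hard part will be precisely this bookkeeping — keeping track of the $s$-, $\tau$- and $\sg$-dependence buried in $c,d,g+a,b+h$ with $\sg=2^{2/3}(2s-\tau^2)$, and reducing systematically by \eqref{eq:Backlund_q} and $u_\nu'=-q_\nu^2$. The computation is tamed by the rank-two factorization $R_\nu=r\,(1,1,0,0)+r'\,(0,0,1,-1)$ with $r=(\beta_\nu,-\beta_\nu,i\gamma_\nu,i\gamma_\nu)^{T}$, $r'=(-i,i,-\delta_\nu,-\delta_\nu)^{T}$, so that $R_\nu\,\Sg B_\nu\Sg$ and $B_{\nu+1}R_\nu$ are each a sum of two rank-one matrices.

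Once $\partial_z\widetilde M=U_{\nu+1}\widetilde M$ is in hand, $\widetilde M$ and $M_{\nu+1}$ are two fundamental solutions of the same linear system on the simply connected domain $\compC\setminus(-\infty,0]$, hence differ by a constant right factor, which the matching leading asymptotics in \eqref{eq:asy_M_nu} force to be the identity; this gives \eqref{Schles_def}. As a logically unnecessary cross-check one can verify in the same manner that $\widetilde M$ satisfies the $s$-part \eqref{eq:Lax_V_part} of the Lax pair with $\nu\to\nu+1$, i.e.\ $\partial_s T=V_{\nu+1}T-T\,\Sg V_\nu\Sg$, which reduces to the same PII identities.
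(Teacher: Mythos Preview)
Your proposal is correct and follows essentially the same strategy as the paper: verify that $\widetilde M$ satisfies the $z$-equation with parameter $\nu+1$ by reducing to algebraic identities that follow from the \Backlund\ relation \eqref{eq:Backlund_q} and PII, then check the asymptotics \eqref{eq:asy_M_nu} and invoke uniqueness. The only organizational difference is that the paper packages the computation through the commutator $R_\nu\Sg U_\nu\Sg-\Sg U_\nu\Sg R_\nu$ and the single scalar identity $\beta_\nu\delta_\nu+\gamma_\nu=(2\nu+1)/(2\alpha_\nu)$ before checking $\Sg U_\nu\Sg+W=U_{\nu+1}$ entrywise, whereas you match coefficients of $z^0$ and $z^{-1}$ separately; the underlying identities are the same.
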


\begin{proof}[Proof of Proposition \ref{Schlesinger}]

  First we show that right-hand side of \eqref{Schles_def} satisfies the differential equation \eqref{eq:diff_for_M_nu} with $\nu$ replaced by $\nu + 1$. In the proof we write $M_{\nu}(z; s, t)$ as $M_{\nu}$ if there is no chance of confusion. We need to check that
  \begin{equation}
    \frac{d}{dz}\left(I + \frac{\alpha_{\nu}}{z} R_\nu\right)\Sg M_{\nu}(z)\Sg = U_{\nu+1} \left(I + \frac{\alpha_\nu}{z} R_\nu \right)\Sg M_\nu(z)\Sg.
  \end{equation}
  Using the differential equation \eqref{eq:diff_for_M_nu} for $M_{\nu}(z)$, this amounts to checking that
  \begin{equation}
    \left( I + \frac{\alpha_\nu}{z} R_\nu \right) \Sg U_\nu M_{\nu}(z) \Sg - \frac{\alpha_\nu}{z^2} R_\nu \Sg M_{\nu}(z)\Sg = U_{\nu+1} \left(I + \frac{\alpha_\nu}{z} R_\nu \right)\Sg M_{\nu}(z)\Sg,
  \end{equation}
  or equivalently
  \begin{equation} \label{eq:formula_for_U_nu+1}
    U_{\nu+1} = \left[\left(I + \frac{\alpha_\nu}{z} R_\nu\right)\Sg U_\nu \Sg-\frac{\alpha_\nu}{z^2} R_\nu\right] \left(I + \frac{\alpha_\nu}{z} R_\nu\right)^{-1}.
  \end{equation}
  It is straightforward (although a little tedious) to check that
  \begin{equation} \label{eq:commutation_RU}
    \begin{split}
      & R_{\nu} \Sigma U_{\nu} \Sigma - \Sigma U_{\nu} \Sigma R_{\nu} \\
      = {}& z
      \begin{pmatrix}
        1 & 1 & 0 & 0 \\
        -1 & - 1& 0 & 0 \\
        -i(\beta_{\nu} + \delta_{\nu}) & -i(\beta_{\nu} + \delta_{\nu}) & -1 & 1 \\
        -i(\beta_{\nu} + \delta_{\nu}) & -i(\beta_{\nu} + \delta_{\nu}) & -1 & 1
      \end{pmatrix}
      + 2(\beta_{\nu} \delta_{\nu} + \gamma_{\nu})
      \begin{pmatrix}
        0 & 1 & 0 & 0 \\
        1 & 0 & 0 & 0 \\
        0 & 0 & 0 & -1 \\
        0 & 0 & -1 & 0
      \end{pmatrix} 
      - \frac{2\nu}{z} R_{\nu} \\
      = {}& \frac{z}{\alpha_{\nu}} W + W R_{\nu} + \frac{1}{z} R_{\nu},
    \end{split}
  \end{equation}
  where
  \begin{equation}
    W = \alpha_{\nu}
    \begin{pmatrix}
      1 & 1 & 0 & 0 \\
      -1 & -1 & 0 & 0 \\
      -i(\beta_{\nu} + \delta_{\nu}) & -i(\beta_{\nu} + \delta_{\nu}) & -1 & 1 \\
      -i(\beta_{\nu} + \delta_{\nu}) & -i(\beta_{\nu} + \delta_{\nu}) & -1 & 1
    \end{pmatrix}
    + \frac{2\nu + 1}{z}
    \begin{pmatrix}
      0 & 1 & 0 & 0 \\
      1 & 0 & 0 & 0 \\
      0 & 0 & 0 & -1 \\
      0 & 0 & -1 & 0
    \end{pmatrix}.
  \end{equation}
  Here for the second identity of \eqref{eq:commutation_RU}, we need the identity
  \begin{equation}
    \beta_{\nu} \delta_{\nu} + \gamma_{\nu} = \frac{2\nu + 1}{2\alpha_{\nu}}.
  \end{equation}
  Hence it follows that
  \begin{equation}
    \left[\left(I + \frac{\alpha_\nu}{z} R_\nu\right)\Sg U_\nu \Sg-\frac{\alpha_\nu}{z^2} R_\nu\right] \left(I + \frac{\alpha_\nu}{z} R_\nu\right)^{-1} = \Sigma U_{\nu} \Sigma + W.
  \end{equation}
  To check \eqref{eq:formula_for_U_nu+1}, we only need to show that
  \begin{equation} \label{eq:technical_comparison}
    \Sigma U_{\nu} \Sigma + W = U_{\nu + 1}.
  \end{equation}
  
  Although there are $16$ entries on both sides of \eqref{eq:technical_comparison} to be compared, it turns out most of them follows straightforwardly from \eqref{Backlund}, and only the four entries in the lower-left block require discussion. Namely, we need to check, after writing $\beta_{\nu}, \delta_{\nu}, g + a, b + h$ into formulas in $q_{\nu} = q_{\nu}(\sigma)$ and $u_{\nu} = u_{\nu}(\sigma)$,
  \begin{multline} \label{eq:comparison_ll_corner}
    i
    \begin{pmatrix}
      z & 0 \\
      0 & -z
    \end{pmatrix}
    + i
    \begin{pmatrix}
      - 2^{-2/3}(q_\nu^2-u_\nu^2)-2s+s^4-2^{2/3}s^2u_\nu & -2^{1/3}(q_\nu u_\nu+q_\nu')+2^{2/3}s^2 q_\nu \\
      -2^{1/3}(q_\nu u_\nu+q_\nu')+2^{2/3}s^2 q_\nu & - 2^{-2/3}(q_\nu^2-u_\nu^2)-2s+s^4-2^{2/3}s^2u_\nu
    \end{pmatrix} \\
    -i \alpha_\nu(2s^2-2^{2/3}(q_\nu+u_\nu))
    \begin{pmatrix}
      1 & 1 \\
      1 & 1
    \end{pmatrix} 
    = i
    \begin{pmatrix}
      z & 0 \\
      0 & -z
    \end{pmatrix} + \\
    i
    \begin{pmatrix}
      - 2^{-2/3}(q^2_{\nu + 1} - u^2_{\nu + 1}) - 2s + s^4 - 2^{2/3} s^2 u_{\nu + 1} & 2^{1/3}(q_{\nu + 1} u_{\nu + 1} + q'_{\nu + 1}) - 2^{2/3} s^2 q_{\nu + 1} \\
      2^{1/3} (q_{\nu + 1} u_{\nu + 1} - q'_{\nu + 1}) + 2^{2/3} s^2 q_{\nu + 1} & - 2^{-2/3} (q^2_{\nu + 1} - u^2_{\nu + 1}) - 2s + s^4 - 2^{2/3} s^2 u_{\nu + 1}
    \end{pmatrix}.
  \end{multline}

  Consider first the diagonal terms in \eqref{eq:comparison_ll_corner}. It suffices to show
  \begin{equation} \label{eq:technical_in_comparison}
    2\alpha_\nu s^2 - 2^{2/3}s^2(u_{\nu+1} - u_\nu) + 2^{-2/3}(u_{\nu+1}^2 - q_{\nu+1}^2 + q_\nu^2 - u_\nu^2) - 2^{2/3} \alpha_\nu (q_\nu + u_\nu) = 0.
  \end{equation}
  The first two terms in \eqref{eq:technical_in_comparison} cancel because of \eqref{Backlund}, and then the equation is simplified into
  \begin{equation} \label{eq:diagonal_Backlund}
    (u_{\nu+1} - u_\nu)(u_{\nu+1} + u_\nu)-(q_{\nu+1} - q_\nu)(q_{\nu+1} + q_\nu)-2^{4/3} \alpha_\nu (q_\nu + u_\nu) = 0
  \end{equation}
  by multiplying $2^{2/3}$ on both sides. The left-hand side of \eqref{eq:diagonal_Backlund} is simplified, by \eqref{Backlund}, into
  \begin{equation}
    2^{1/3} \alpha_\nu (u_{\nu+1} + u_\nu)-2^{1/3} \alpha_\nu (q_{\nu+1} - q_\nu)-2^{4/3} \alpha_\nu (q_\nu + u_\nu) = 2^{1/3}\alpha_{\nu} [(u_{\nu+1}-u_\nu)-(q_{\nu+1}+q_\nu)],
  \end{equation}
  and it vanishes by \eqref{eq:Backlund_q}. Thus we confirm \eqref{eq:technical_in_comparison}.
  
  Consider next the off-diagonal entries in \eqref{eq:comparison_ll_corner}. It suffices to show
  \begin{equation}
    2^{1/3}(q'_{\nu+1} + u_{\nu+1}q_{\nu+1}) - 2^{2/3} s^2q_{\nu+1} = -2\alpha_\nu s^2 + 2^{2/3} \alpha_\nu (q_\nu + u_\nu) - 2^{1/3}(q_\nu u_\nu + q'_\nu) + 2^{2/3}s^2 q_\nu.
  \end{equation}
  We can immediately cancel the $s^2$ terms using \eqref{Backlund}, and then by dividing $2^{1/3}$ on both sides, we are left with
  \begin{equation}\label{offdiag3}
    (q'_{\nu + 1} + q'_\nu) + (q_\nu u_\nu + u_{\nu+1}q_{\nu+1})-2^{1/3} \alpha_\nu (q_\nu + u_\nu) = 0.
  \end{equation}
  The first term in this expression is
  \begin{equation}
    q'_{\nu+1} + q'_\nu = 2^{1/3}\frac{d \alpha_\nu}{d \sg}.
  \end{equation}
Using the formula for $\alpha_\nu$ given in \eqref{Backlund}, along with the \Painleve\ II equation \eqref{PII} with parameter $\nu$, one finds that $\alpha_\nu$ satisfies
\begin{equation}
  \frac{d \alpha_\nu}{d \sg} = -\frac{(2\nu + 1)(4q_{\nu} q'_{\nu} - 2q''_{\nu} + 1)}{2^{1/3} (2q^2_{\nu} - 2q'_{\nu} + \sigma)^2} = -\frac{(2\nu + 1)(4q_{\nu} q'_{\nu} - 4q^3_{\nu} -2\sigma q_{\nu} + 2\nu + 1)}{2^{1/3}(2q^2_{\nu} - 2q'_{\nu} + \sigma)^2} = 2 \alpha_\nu q_\nu - 2^{1/3} \alpha_\nu^2,
\end{equation}
and so equation \eqref{offdiag3} can be written as
\begin{equation}
  2^{4/3} \alpha_\nu q_\nu-2^{2/3} \alpha_\nu ^2+(q_\nu u_\nu+u_{\nu+1}q_{\nu+1})-2^{1/3} \alpha_\nu (q_\nu+u_\nu)=0.
\end{equation}
In the $\alpha_\nu^2$ term, we now replace one of the factors of $\alpha_\nu$ with $2^{-1/3}(u_{\nu+1}-u_\nu)$, and the other with $2^{-1/3}(q_{\nu+1}+q_\nu)$, yielding 
\begin{equation} \label{eq:Backlund_off_diag}
  2^{4/3} \alpha_\nu q_\nu - (u_{\nu+1} - u_\nu)(q_{\nu+1} + q_\nu) + (q_\nu u_\nu+u_{\nu+1}q_{\nu+1}) - 2^{1/3} \alpha_\nu (q_\nu+u_\nu)=0.
\end{equation}
The identity \eqref{eq:Backlund_off_diag} can be written as
% \begin{equation}
% 2q_\nu u_\nu-u_{\nu+1}q_\nu+u_\nu q_{\nu+1}+2^{1/3}a_\nu (q_\nu-u_\nu)=0,
% \end{equation}
% equivalently
\begin{equation}
  -q_\nu(u_{\nu+1} - u_\nu-2^{1/3} \alpha_\nu) + u_\nu(q_{\nu+1} + q_\nu - 2^{1/3} \alpha_\nu) = 0,
\end{equation}
which holds by \eqref{Backlund}.

By the theory of differential equations, the solution to \eqref{eq:diff_for_M_nu} that satisfies \eqref{eq:asy_M_nu} is unique, where $\nu$ can be any real number greater than $-1/2$. We have shown that the right-hand side of \eqref{Schles_def} satisfies \eqref{eq:diff_for_M_nu} with $\nu$ replaced by $\nu + 1$. On the other hand, the conjugation by $\Sg$ and the left multiplication by the matrix $(I + \alpha z^{-1} R_\nu)$ to $M_{\nu}$ do not change the leading asymptotic behavior at infinity, so the right-hand side of \eqref{Schles_def} satisfies \eqref{eq:asy_M_nu}. Thus we verify \eqref{Schles_def} and prove the Proposition.
\end{proof}

\begin{rmk}
  The formula \eqref{eq:formula_R_nu} of $R_{\nu}$ seems to come out of the blue, and the proof suggests little on how the formula is found. Suppose we have obtained the formula \eqref{Schles_def} for the Schlesinger transformation with the help of guesswork, then the explicit formula of $R_{\nu}$ can be derived by matching the subleading coefficients for $M_{\nu}(z)$ and $M_{\nu + 1}(z)$ as $z \to \infty$, namely $M_{\nu, 1}$ in \eqref{eq:asy_M_nu} and its counterpart $M_{\nu + 1, 1}$. The explicit formula of $M_{\nu, 1}$ is given in \cite[Theorem 1]{Delvaux13a}, with the notation $M_1$ there.
\end{rmk}

\section{The hard-edge tacnode kernels of Delvaux and proofs of Proposition \ref{lem:tacnode_relation} and Theorem \ref{thm:half-integer}} \label{sec:kernels}

In this section, for notational convention we let
\begin{equation}
  \alpha = \nu - 1/2,
\end{equation}
so that $\nu$ is an integer when $\alpha$ is a half-integer.

Then the limiting hard-edge tacnode kernel $K^{\tac, (\al)}(x,y; s, \tau)$ for nonintersecting Bessel process with parameter $\alpha > -1$ is given in terms of $M_{\nu}(z; s, \tau)$ defined in Section \ref{sec:M_nu_defn} \cite[Theorem 4 and Remark 2]{Delvaux13a}. For notational convention, we denote the $4\times 4$ matrix $D$ 
\begin{equation}
  D =
  \begin{pmatrix}
    x+ y & -x+ y & 0 & 0 \\
    -x + y & x+ y & 0 & 0 \\
    0 & 0 & x+ y & x- y \\
    0 & 0 & x-y & x+ y
  \end{pmatrix},
\end{equation}
and we have the formula
\begin{equation}\label{eq:Delvaux_kernel}
  K^{\tac, (\al)}(x,y; s, \tau) = \frac{1}{2\pi i(x^2-y^2)}
  \left( -1, 0, 1, 0 \right)
  M_\nu(y; s, \tau)^{-1} D M_\nu(x; s, \tau)
  \left( 1, 0, 1, 0 \right)^T.
\end{equation}
Note that the entries of $M^{-1}_{\nu}$ are the same as the entries of $M_{\nu}(x; s, -\tau)$ up to permutation and $(-1)^{\pm}$ factors. More concretely, by \cite[Lemma 2]{Delvaux13a} we have
\begin{equation} \label{eq:inverse_conj_symmetry}
  M_{\nu}(z; s, \tau)^{-1} = K^{-1} M_{\nu}(z; s, -\tau)^T K, \quad K = 
  \begin{pmatrix}
    0 & -I_{2 \times 2} \\
    I_{2 \times 2} & 0
  \end{pmatrix}.
\end{equation}
Hence if we denote the $4$-dimensional column vector
\begin{equation} \label{eq:vect_n}
  \vec{n}_{\nu}(z; s, \tau) = \left( n_{\nu}(z; s, \tau)_i \right)^4_{i = 1}, \quad \text{where} \quad n_{\nu}(z; s, \tau)_i = M_{\nu}(z; s, \tau)_{i1} + M_{\nu}(z; s, \tau)_{i3},
\end{equation}
we have
\begin{equation} \label{eq:kernel_reflecting_expanded}
  \begin{split}
    K^{\tac, (\al)}(x,y; s, \tau) = {}& \frac{1}{2\pi i(x-y)} \Big( n_{\nu}(y; s, -\tau)_1 n_{\nu}(x; s, \tau)_3 + n_{\nu}(y; s, -\tau)_2 n_{\nu}(x; s, \tau)_4 \\
  & \phantom{\frac{1}{2\pi i(x-y)} \Big(} - n_{\nu}(y; s, -\tau)_3 n_{\nu}(x; s, \tau)_1 - n_{\nu}(y; s, -\tau)_4 n_{\nu}(x; s, \tau)_2 \Big) \\
  & + \frac{1}{2\pi (x + y)} \Big( n_{\nu}(y; s, -\tau)_1 n_{\nu}(x; s, \tau)_4 + n_{\nu}(y; s, -\tau)_2 n_{\nu}(x; s, \tau)_3 \\
  & \phantom{+ \frac{1}{2\pi (x + y)} \Big(} + n_{\nu}(y; s, -\tau)_3 n_{\nu}(x; s, \tau)_2 + n_{\nu}(y; s, -\tau)_4 n_{\nu}(x; s, \tau)_1 \Big).
  \end{split}
\end{equation}
\begin{rmk}
  In \cite[Theorem 4]{Delvaux13a}, the limiting hard-edge tacnode kernel is defined for the nonintersecting squared Bessel process. Our kernel defined in \eqref{eq:Delvaux_kernel} is for the limiting kernel of the nonintersecting Bessel process, which differs from the squared one by a quadratic change of variables, see \cite[Remark 2]{Delvaux13a}.
\end{rmk}

Proposition \ref{Schlesinger} implies that the tacnode kernel with Bessel parameter $\alpha$ can be expressed in terms of the Lax pair for the inhomogeneous PII equation with parameter $\nu - 1$ as well as $\nu$. Indeed we have that by \eqref{eq:Delvaux_kernel} and \eqref{Schles_def}
\begin{equation}
  \begin{split}
    K^{\tac, (\al + 1)}(x,y; s, \tau)
    = {}& \frac{1}{2\pi i(x^2-y^2)} \left( -1, 0, 1, 0 \right) \Sigma M_{\nu}(y; s, \tau)^{-1} \Sigma \left( I + \frac{\alpha_{\nu}}{y} R_{\nu} \right)^{-1} \\
    & \phantom{\frac{1}{2\pi i(x^2-y^2)}} \times D \left( I + \frac{\alpha_{\nu}}{x} R_{\nu} \right) \Sigma M_{\nu}(x; s, \tau) \Sigma \left( 1, 0, 1, 0 \right)^T \\ 
    = {}& \frac{1}{2\pi i(x^2-y^2)} \left( -1, 0, 1, 0 \right) M_{\nu}(y; s, \tau)^{-1}\Sg D \Sg M_{\nu}(x; s, \tau) \left( 1, 0, 1, 0 \right)^T,
  \end{split}
\end{equation} 
where the second equality follows from the following identities which are easily checked:
\begin{equation}
  \left( I + \frac{\alpha_\nu}{z}R_\nu \right)^{-1} = \left( I - \frac{\alpha_\nu}{z}R_\nu \right), \qquad R_\nu D R_\nu = 0, \qquad
  \frac{ R_\nu D }{2y} = \frac{D R_\nu }{2x} =  R_\nu .
\end{equation}
Then similar to \eqref{eq:kernel_reflecting_expanded}, we have
\begin{equation}
  \begin{split}
    K^{\tac, (\al + 1)}(x,y; s, \tau) = {}& \frac{1}{2\pi i(x-y)} \Big( n_{\nu}(y; s, -\tau)_1 n_{\nu}(x; s, \tau)_3 + n_{\nu}(y; s, -\tau)_2 n_{\nu}(x; s, \tau)_4 \\
  & \phantom{\frac{1}{2\pi i(x-y)} \Big(} - n_{\nu}(y; s, -\tau)_3 n_{\nu}(x; s, \tau)_1 - n_{\nu}(y; s, -\tau)_4 n_{\nu}(x; s, \tau)_2 \Big) \\
  & - \frac{1}{2\pi (x + y)} \Big( n_{\nu}(y; s, -\tau)_1 n_{\nu}(x; s, \tau)_4 + n_{\nu}(y; s, -\tau)_2 n_{\nu}(x; s, \tau)_3 \\
  & \phantom{+ \frac{1}{2\pi (x + y)} \Big(} + n_{\nu}(y; s, -\tau)_3 n_{\nu}(x; s, \tau)_2 + n_{\nu}(y; s, -\tau)_4 n_{\nu}(x; s, \tau)_1 \Big).
  \end{split}
\end{equation}

Similarly,
\begin{equation}
  \begin{aligned}
    K^{\tac, (\al+2)}(x,y; s, \tau) = {}& K^{\tac, (\al)}(x,y; s, \tau) \\
    &-\frac{a_\nu}{\pi ixy} \left( -1, 0, 1, 0 \right) M_{\nu}(y; s, \tau)^{-1} \Sigma R_\nu \Sigma M_{\nu}(x; s, \tau) \left( 1, 0, 1, 0 \right)^T,
\end{aligned}
\end{equation} 
and inductively
\begin{multline} \label{eq:general_hard_tac_formula}
  2\pi i(x^2 - y^2) K^{\tac, (\al + k)}(x,y; s, \tau) = \\
  \left( -1, 0, 1, 0 \right) \Sigma^k K^{-1} M_{\nu}(y; s, -\tau) K \left( \Sigma - \frac{\alpha_{\nu}}{y} \Sigma R_{\nu} \right) \dotsb \left( \Sigma - \frac{\alpha_{\nu + k - 1}}{y} \Sigma R_{\nu + k - 1} \right) \\
  \times D \left( \Sigma + \frac{\alpha_{\nu + k - 1}}{x} R_{\nu + k - 1} \Sigma \right) \dotsb \left( \Sigma + \frac{\alpha_{\nu + k - 1}}{x} R_{\nu + k - 1} \Sigma \right) M_{\nu}(x; s, \tau) \Sigma^k \left( 1, 0, 1, 0 \right)^T,
\end{multline} 
which is a linear combination of $n_{\nu}(y; s, -\tau)_i n_{\nu}(x; s, \tau)_j$ with $i, j = 1, 2, 3, 4$.

For general values of $\nu$, the kernel \eqref{eq:Delvaux_kernel} has the property that its derivative with respect to $s$ is a rank-1 kernel. Namely we have the following proposition.
\begin{prop} \label{prop:derivative_of_tac}
  The kernel \eqref{eq:Delvaux_kernel} satisfies
  \begin{equation}\label{eq:ds_rank1}
    \frac{\d}{\d s} K^{\tac, (\al)}(x,y; s, \tau)=-\frac{F_\nu(x; s, \tau) F_\nu(y; s, -\tau)}{\pi},
  \end{equation}
  where $F_\nu(z; s, \tau)$ is the following combination of the matrix entries of $M_\nu(z; s,\tau)$:
  \begin{equation}\label{eq:def_Fnu}
    \begin{split}
      F_\nu(z; s, \tau) := {}& \left( 1, 1, 0, 0 \right) M_{\nu}(z; s, \tau) \left( 1, 0, 1, 0 \right)^T \\
      = {}& M_\nu(z; s, \tau)_{11}+M_\nu(z; s, \tau)_{13}+M_\nu(z; s, \tau)_{21}+M_\nu(z; s, \tau)_{23} \\
      = {}& n_{\nu}(z; s, \tau)_1 + n_{\nu}(z; s, \tau)_3.
    \end{split}
  \end{equation}
\end{prop}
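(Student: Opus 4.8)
The plan is to differentiate the representation \eqref{eq:Delvaux_kernel} with respect to $s$ and to show that the resulting bilinear form is forced to be of rank one by an algebraic identity for the $4\times4$ matrix $D\,V_\nu(x)-V_\nu(y)\,D$, where $V_\nu$ is the coefficient matrix in the $s$-part \eqref{eq:Lax_V_part} of the Lax pair (and $V_\nu(z)$ keeps its $z$-dependence, the two $\mp z$ entries, explicit). Since $D$ is independent of $s$, and $\partial_s M_\nu(z;s,\tau)=V_\nu M_\nu(z;s,\tau)$ gives $\partial_s M_\nu(z;s,\tau)^{-1}=-M_\nu(z;s,\tau)^{-1}V_\nu$, the product rule yields
\[
\partial_s K^{\tac,(\al)}(x,y;s,\tau)=\frac{1}{2\pi i(x^2-y^2)}(-1,0,1,0)\,M_\nu(y;s,\tau)^{-1}\big[D\,V_\nu(x)-V_\nu(y)\,D\big]M_\nu(x;s,\tau)\,(1,0,1,0)^T,
\]
so the whole statement reduces to evaluating this matrix.

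To do that I would split $V_\nu(z)=V_\nu^{(0)}+z\,V_\nu^{(1)}$ with $V_\nu^{(1)}=-2iE$, where $E$ is the nilpotent matrix having $E_{31}=E_{42}=1$ and all other entries $0$, and write $D=(x+y)I+(x-y)\Lambda$, which defines a constant matrix $\Lambda$. Two elementary facts are needed: $\Lambda E=-E\Lambda$, which is immediate; and $[\Lambda,V_\nu^{(0)}]=0$, which follows by comparing the entries of $\Lambda V_\nu^{(0)}$ and $V_\nu^{(0)}\Lambda$ using the explicit form \eqref{eq:Lax_V_part} (the lower-left block matches because the relevant entries of $V_\nu^{(0)}$ differ only by sign). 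Expanding $D\,V_\nu(x)-V_\nu(y)\,D$ with these relations, the term proportional to $(x-y)$ alone drops out precisely because $[\Lambda,V_\nu^{(0)}]=0$, and one is left with
\[
D\,V_\nu(x)-V_\nu(y)\,D=-2i(x^2-y^2)\,(E+\Lambda E)=-2i(x^2-y^2)\,(0,0,1,1)^T(1,1,0,0),
\]
since $E+\Lambda E$ is exactly the rank-one matrix whose four lower-left entries equal $1$. The factor $x^2-y^2$ cancels the prefactor, so
\[
\partial_s K^{\tac,(\al)}(x,y;s,\tau)=-\frac{1}{\pi}\Big[(-1,0,1,0)\,M_\nu(y;s,\tau)^{-1}(0,0,1,1)^T\Big]\Big[(1,1,0,0)\,M_\nu(x;s,\tau)(1,0,1,0)^T\Big],
\]
and the second bracket is $F_\nu(x;s,\tau)$ by definition \eqref{eq:def_Fnu}.

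It then remains to identify the first bracket with $F_\nu(y;s,-\tau)$, for which I would use the conjugation symmetry \eqref{eq:inverse_conj_symmetry}: writing $M_\nu(y;s,\tau)^{-1}=K^{-1}M_\nu(y;s,-\tau)^TK$ and checking the elementary identities $(-1,0,1,0)K^{-1}=-(1,0,1,0)$ and $K(0,0,1,1)^T=-(1,1,0,0)^T$, the first bracket becomes $(1,0,1,0)\,M_\nu(y;s,-\tau)^T(1,1,0,0)^T$, which, being a scalar and hence equal to its own transpose, equals $(1,1,0,0)\,M_\nu(y;s,-\tau)(1,0,1,0)^T=F_\nu(y;s,-\tau)$. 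This is exactly \eqref{eq:ds_rank1}. The only step that is not pure bookkeeping is the matrix identity for $D\,V_\nu(x)-V_\nu(y)\,D$; within it the commutation $[\Lambda,V_\nu^{(0)}]=0$ is the structural reason the $s$-derivative has rank one, and I expect verifying it to be the main (though still routine) obstacle.
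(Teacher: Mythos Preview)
Your proposal is correct and follows essentially the same route as the paper: differentiate \eqref{eq:Delvaux_kernel} in $s$ using $\partial_s M_\nu=V_\nu M_\nu$, reduce to the matrix $D\,V_\nu(x)-V_\nu(y)\,D$, recognize it as $-2i(x^2-y^2)$ times the rank-one matrix with $J_2$ in the lower-left block, and then use \eqref{eq:inverse_conj_symmetry} to turn the $y$-factor into $F_\nu(y;s,-\tau)$. The only difference is cosmetic: the paper asserts the matrix identity directly (``due to the special structure of $D$ and $V$''), whereas you derive it by the decomposition $V_\nu=V_\nu^{(0)}+zV_\nu^{(1)}$, $D=(x+y)I+(x-y)\Lambda$ together with $[\Lambda,V_\nu^{(0)}]=0$ and $\Lambda E=-E\Lambda$, which is a nice structural explanation for why the rank drops.
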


\begin{proof}
  The $s$-derivative of $K^{\tac, (\alpha)}$ is
  \begin{equation}\label{eq:diff_ker0}
    \begin{aligned}
      \frac{\d}{\d s}& K^{\tac, (\al)}(u,v; s, \tau)= \\
      &\frac{1}{2\pi i(x^2-y^2)} \left( -1, 0, 1, 0 \right) \frac{\partial}{\partial s}\left[M_\nu(y; s, \tau)^{-1} D M_\nu(x; s, \tau)\right] \left( 1, 0, 1, 0 \right)^T.
    \end{aligned}
  \end{equation}
  Writing for the moment $M(z) \equiv M_\nu(z; s, \tau),$ we have
  \begin{equation}\label{eq:diff_ker1}
    \frac{\d}{\d s}\left[M(y)^{-1} D M(x)\right]=M(y)^{-1}\left[D\left[\frac{\d}{\d s} M(x)\right]M(x)^{-1} - \left[\frac{\d}{\d s} M(y)\right] M(y)^{-1} D \right]M(x).
  \end{equation} 
  Using \eqref{eq:Lax_V_part}, we find that \eqref{eq:diff_ker1} can be written as
  \begin{equation}\label{eq:diff_ker2}
    \frac{\d}{\d s}\left[M(y)^{-1} D M(x)\right]=M(y)^{-1}\left[DV(x) -V(y) D \right]M(x).
  \end{equation} 
  Due to the special structure of $D$ and $V$, we have
  \begin{equation}
    DV(x) -V(y) D=-2i(x^2-y^2)
    \begin{pmatrix}
      0_{2 \times 2} & 0_{2 \times 2} \\
      J_2 & 0_{2 \times 2}
    \end{pmatrix},
    \quad J_2 =
    \begin{pmatrix}
      1 & 1 \\
      1 & 1
    \end{pmatrix},
  \end{equation}
  and therefore \eqref{eq:diff_ker2} is 
  \begin{equation}\label{eq:diff_ker4}
    \frac{\d}{\d s}\left[M(y)^{-1} D M(x)\right]=-2i(x^2-y^2)M(y)^{-1}
    \begin{pmatrix}
      0 & 0 \\
      J_2 & 0
    \end{pmatrix}
    M(x),
  \end{equation} 
  and \eqref{eq:diff_ker0} is
\begin{equation}\label{eq:diff_ker5}
    \frac{\d}{\d s} K^{\tac, (\al)}(x,y; s, \tau)= 
    -\frac{1}{\pi} \left( -1, 0, 1, 0 \right) M(y; s, \tau)^{-1}
    \begin{pmatrix}
      0 & 0 \\
      J_2 & 0
    \end{pmatrix}
    M(x; s, \tau) \left( 1, 0, 1, 0 \right)^T.
\end{equation}
Using \eqref{eq:inverse_conj_symmetry} to write the entries of $M_{\nu}(y; s, \tau)^{-1}$ into those of $M_{\nu}(y; s, -\tau)$, we obtain
\begin{equation}\label{eq:diff_ker6}
  \begin{aligned}
    \frac{\d}{\d s} K^{\tac, (\al)}(x,y; s, \tau)= {}& -\frac{1}{\pi} \left( -1, 0, 1, 0 \right) K^{-1} M_\nu(y; s, -\tau)^T K
    \begin{pmatrix}
      0 & 0 \\
      J_2 & 0
    \end{pmatrix}
    M_\nu(x; s,\tau) \left( 1, 0, 1, 0 \right)^T \\
    = {}& -\frac{1}{\pi} \left( 1, 0, 1, 0 \right) M_\nu(y; s, -\tau)^{T}
    \begin{pmatrix}
      J_2 & 0 \\
      0 & 0
    \end{pmatrix}
    M_\nu(x; s,\tau) \left( 1, 0, 1, 0 \right)^T,
  \end{aligned}
\end{equation}
which is \eqref{eq:ds_rank1}.
\end{proof}

The Schlesinger transformation for $M_{\nu}(z; s, \tau)$ implies identities for $F_\nu(z; s, \tau)$. First, Proposition \ref{Schlesinger} yields
\begin{equation}\label{eq_Fnuplus1}
  \begin{split}
    F_{\nu+1}(z; s, \tau) = {}& \left( 1, 1, 0, 0 \right) \left( I + \frac{\alpha_{\nu}}{z} R_{\nu} \right) \Sigma M_{\nu}(z; s, \tau) \Sigma \left( 1, 0, 1, 0 \right)^T \\
    = {}& \left( 1, -1, 0, 0 \right) M_\nu(z; s, \tau) \left( 1, 0, 1, 0 \right)^T \\
    = {}& M_\nu(z; s, \tau)_{11} + M_\nu(z; s, \tau)_{13} - M_\nu(z; s, \tau)_{21} - M_\nu(z; s, \tau)_{23} \\
    = {}& n_{\nu}(z; s, \tau)_1 - n_{\nu}(z; s, \tau)_2.
  \end{split}
\end{equation} 
Also
\begin{equation} \label{eq:Fnuplus2}
  \begin{split}
    F_{\nu + 2}(z; s, \tau) = {}& \left( 1, 1, 0, 0 \right) \left( I + \frac{\alpha_{\nu + 1}}{z} R_{\nu + 1} \right) \Sigma \left( I + \frac{\alpha_{\nu}}{z} R_{\nu} \right) \Sigma M_{\nu}(z; s, \tau) \left( 1, 0, 1, 0 \right)^T \\
    = {}& F_{\nu}(z; s, \tau) + \frac{2\alpha_{\nu}}{z} \left( \beta_{\nu} n_{\nu}(z; s, \tau)_1 - \beta_{\nu} n_{\nu}(z; s, \tau)_2 - in_{\nu}(z; s, \tau)_3 - in_{\nu}(z; s, \tau)_4 \right).
  \end{split}
\end{equation}
Inductively,
\begin{equation} \label{eq:inductive_F_nu+k}
  F_{\nu + k}(z; s, \tau) = \left( 1, 1, 0, 0 \right) \left( \Sigma + \frac{\alpha_{\nu + k - 1}}{z} R_{\nu + k - 1} \Sigma \right) \dotsb \left( \Sigma + \frac{\alpha_{\nu}}{z} R_{\nu} \Sigma \right) M_{\nu}(z; s, \tau) \left( 1, 0, 1, 0 \right)^T,
\end{equation}
is a linear combination of $n_{\nu}(z; s, \tau)_i$ ($i = 1, 2, 3, 4$).
We expect the following vanishing property:
\begin{conj} \label{conj:vanishing}
  Let $\alpha > -1$. For fixed $x, y \in \realR_+$ and $\tau \in \realR$,
  \begin{equation}
    \lim_{s \to \infty} K^{\tac, (\alpha)}(x, y; s, \tau) = 0.
  \end{equation}
\end{conj}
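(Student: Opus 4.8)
The plan is to deduce the vanishing from the rank-one derivative identity of Proposition~\ref{prop:derivative_of_tac} together with the decay, as $s\to+\infty$, of the functions that build the kernel. Integrating \eqref{eq:ds_rank1} in $s$ gives, for $s_1<s_2$,
\begin{equation*}
  K^{\tac, (\al)}(x,y; s_1, \tau) - K^{\tac, (\al)}(x,y; s_2, \tau) = \frac{1}{\pi} \int_{s_1}^{s_2} F_\nu(x; \widetilde{s}, \tau)\, F_\nu(y; \widetilde{s}, -\tau)\, d\widetilde{s}.
\end{equation*}
So once one knows that $\int_s^\infty |F_\nu(x;\widetilde s,\tau)\, F_\nu(y;\widetilde s,-\tau)|\, d\widetilde s<\infty$, the limit $L_\al(x,y;\tau):=\lim_{s\to\infty} K^{\tac, (\al)}(x,y;s,\tau)$ exists and the conjecture is the statement $L_\al\equiv 0$. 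Note also that on the diagonal with $\tau=0$ the same identity reads $\partial_s K^{\tac, (\al)}(x,x;s,0)=-\pi^{-1}F_\nu(x;s,0)^2\le 0$, so that $K^{\tac, (\al)}(x,x;s,0)$ is nonincreasing in $s$; being the intensity of the hard-edge tacnode determinantal process it is also nonnegative, hence it decreases to $L_\al(x,x;0)\ge 0$, and everything reduces to excluding a strictly positive limit.

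For half-integer $\al=k-1/2$ this can be done with machinery already in the paper: by Theorem~\ref{thm:half-integer} (together with \eqref{eq:def_Fnu} and \eqref{eq:inductive_F_nu+k}), $K^{\tac, (k-1/2)}(x,y;s,\tau)=\pi^{-1}\int_s^\infty F_k(x;\widetilde s,\tau)\,F_k(y;\widetilde s,-\tau)\, d\widetilde s$, and $F_k$ is a contour integral whose integrand is a linear combination of the recessive Flaschka--Newell solutions $f(\z;s),g(\z;s)$ and $\z f(\z;s),\z g(\z;s)$, with coefficients rational in $q_0(\sg),q_0'(\sg)$, where $\sg=2^{2/3}(2s-\tau^2)$. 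Since $q_0(\sg)\to 0$ as $\sg\to+\infty$ these coefficients stay bounded, while $f(\z;s)$ and $g(\z;s)$ decay exponentially as $s\to+\infty$, uniformly on the contour---this is exactly the estimate of \cite[Lemma~5.2]{Liechty-Wang16} already used in the proof of Proposition~\ref{prop:triple_int}. Hence $|F_k(z;s,\tau)|\le C\,e^{-c s^{3/2}}$, which makes the $\widetilde s$-integral absolutely convergent and forces $K^{\tac, (k-1/2)}(x,y;s,\tau)\to 0$; in particular the cases $\al=\pm 1/2$ follow (and can alternatively be read off from Proposition~\ref{lem:tacnode_relation} and the triple-integral representation \eqref{rank7a}--\eqref{rank8a}, whose integrand is dominated by the same exponentially small quantity).

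The genuine obstacle is the general $\al>-1$ case, where one needs the analogue of that exponential decay---control of $M_\nu(z;s,\tau)$, hence of $F_\nu(z;s,\tau)$, as $s\to+\infty$ for non-integer $\nu$, a regime in which neither the Airy-resolvent formulas nor the reduction to the $2\times2$ Flaschka--Newell Lax pair is available. The natural but technically heavy route is a nonlinear steepest-descent analysis of Delvaux's $4\times4$ Riemann--Hilbert problem as $s\to+\infty$: since $\sg\to+\infty$ makes the Hastings--McLeod data $q_\nu(\sg)\sim\nu/\sg$ and $u_\nu(\sg)$ decay algebraically, the jump matrices degenerate, one builds a global parametrix, and the leading contributions to \eqref{eq:Delvaux_kernel} should cancel, giving $L_\al\equiv 0$; this matches the probabilistic picture, in which $s\to+\infty$ is the regime where the lowest Bessel paths retreat from the hard wall and the $O(1)$ window at the edge becomes asymptotically empty. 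A softer-looking alternative would be to use the Schlesinger-transform relations among $K^{\tac, (\al)}$, $K^{\tac, (\al+1)}$ and $K^{\tac, (\al+2)}$ obtained after Proposition~\ref{Schlesinger} (note $\alpha_\nu\to 0$ as $s\to+\infty$) to reduce $L_\al$ to the known half-integer values, but the error terms there involve $\alpha_\nu$ times a quadratic form in $M_\nu^{-1}$, $R_\nu$ and $M_\nu$ with $R_\nu$ growing polynomially in $s$, so controlling them appears to need the same steepest-descent input; the Riemann--Hilbert analysis is thus the heart of the matter.
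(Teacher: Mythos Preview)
Your discussion of the general $\alpha>-1$ case is accurate: the paper does not prove the conjecture in that generality and leaves it open, pointing (as you do) to a steepest-descent analysis of the $4\times 4$ Riemann--Hilbert problem as the natural route.

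For the half-integer case $\alpha=k-1/2$, however, your argument is circular. You invoke the integral representation
\[
K^{\tac,(k-1/2)}(x,y;s,\tau)=\frac{1}{\pi}\int_s^\infty F_k(x;\widetilde s,\tau)\,F_k(y;\widetilde s,-\tau)\,d\widetilde s
\]
from Theorem~\ref{thm:half-integer} and then let $s\to\infty$. But in the paper that integral formula is obtained precisely by combining Proposition~\ref{prop:derivative_of_tac} with Conjecture~\ref{conj:vanishing} (see the sentence preceding \eqref{int_rank1}); Theorem~\ref{thm:half-integer} is established only \emph{after} the conjecture has been verified for half-integers. So the decay of $F_k$ by itself gives you only that $\lim_{s\to\infty}K^{\tac,(k-1/2)}$ exists, not that it is zero; the constant of integration is exactly what is in question. (A minor side issue: for $k\ge 2$ the coefficients produced by the Schlesinger factors $\alpha_j R_j$ are not bounded in $s$---the entries $\beta_j,\gamma_j,\delta_j$ of $R_j$ contain powers of $s$---though the polynomial growth is still dominated by the exponential decay of $f,g$.)

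The paper's actual proof for half-integer $\alpha$ avoids this circularity by not going through $F_k$ or the integral formula at all. It uses \eqref{eq:general_hard_tac_formula} to write $(x^2-y^2)K^{\tac,(k-1/2)}$ as a bilinear combination of the components $n_0(y;s,-\tau)_i\,n_0(x;s,\tau)_j$, and then shows directly that each $n_0(z;s,\tau)_i\to 0$ as $s\to+\infty$ via the \emph{Airy resolvent} formulas of \cite{Kuijlaars14} (explicit expressions in terms of $\Ai$, $Q_\sigma$, $R_\sigma$) together with crude bounds on those resolvent quantities; the diagonal $x=y$ is then handled by analyticity. Your ingredients can be repurposed into a legitimate alternative: use \eqref{eq:n0_formulas} (which does not depend on the conjecture) and the uniform decay of $f,g$ on $\Sg_T$ to show $n_0(z;s,\tau)_i\to 0$ directly, and feed this into \eqref{eq:general_hard_tac_formula}. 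That closes the loop without ever invoking the integral representation.
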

If Conjecture \ref{conj:vanishing} is true, then Proposition \ref{prop:derivative_of_tac} implies the formula
\begin{equation}\label{int_rank1}
  K^{\tac, (\al)}(x,y; s, \tau) = \frac{1}{\pi} \int_s^\infty F_\nu(x; \widetilde{s}, \tau) F_\nu(y; \widetilde{s}, -\tau)\,d\widetilde{s}.
\end{equation}

We now consider the case $\nu=0$. In the paper \cite{Kuijlaars14}, a matrix-valued function $M(z)$ is defined as a solution to a Riemann--Hilbert problem, in six sectors, with parameters $r_1, r_2, s_1, s_2, \tau$. It is clear that our matrix-valued function $M_{\nu=0}(z; s, \tau)$ agrees $M(z)$ defined in sector $\Omega_0 = \{ z \mid 0 < \arg z < \pi/3 \}$, with parameters $r_1 = r_2 = 1$, $s_1 = s_2 = s$, and the same $\tau$. In \cite{Liechty-Wang16}, a more general Riemann--Hilbert problem is considered, with the same parameters $r_1, r_2, s_1, s_2, \tau$ and more parameters $t_1, t_2, t_3$. If $(t_1, t_2, t_3) = (1, 0, -1)$, the Riemann--Hilbert problem in \cite{Liechty-Wang16} is essentially the same as that in \cite{Kuijlaars14}, up to a multiplication by a constant matrix in sector $\Omega_0$, see \cite[Section 1.4.3]{Liechty-Wang16}. The column vector $\vec{n}_{\nu}(z; s, \tau)$ defined in \eqref{eq:vect_n} is equal to the vector $m^{(0)} + m^{(3)}$ in \cite[Theorem 2]{Kuijlaars14}, and equal to the vector $n^{(0)} - n^{(3)}$ in \cite[Theorem 1.4]{Liechty-Wang16}, see \cite[Formulas (1.42) and (1.70)]{Liechty-Wang16}.

By \cite[Formula (1.36)]{Liechty-Wang16}, we have that
\begin{equation}\label{eq:n0_formulas}
  \begin{gathered}
    n_0(z; s, \tau)_1 = \frac{2^{1/6}}{\sqrt{\pi}} \int_{\Sg_T} e^{2^{4/3}\tau \zeta^2 + 2^{2/3} i z \zeta} f(\z; \sg) d\zeta, \quad n_0(z; s, \tau)_2 = \frac{2^{1/6}}{\sqrt{\pi}} \int_{\Sg_T} e^{2^{4/3}\tau \zeta^2 + 2^{2/3} i z \zeta} g(\z; \sg) d\zeta, \\
    n_0(z; s, \tau)_3 = \frac{2^{1/6}}{\sqrt{\pi}} \int_{\Sg_T} e^{2^{4/3}\tau \zeta^2 + 2^{2/3} i z \zeta} \left[ \left( i(\tau - s^2 + 2^{-1/3} u_0) + 2^{2/3} \zeta \right) f(\z; \sg) + 2^{-1/3}i q_0 g(\zeta; \sigma) \right] d\zeta, \\
    n_0(z; s, \tau)_4 = \frac{2^{1/6}}{\sqrt{\pi}} \int_{\Sg_T} e^{2^{4/3}\tau \zeta^2 + 2^{2/3} i z \zeta} \left[ \left( i(-\tau + s^2 - 2^{-1/3} u_0) + 2^{2/3} \zeta \right) g(\z; \sg) - 2^{-1/3}i q_0 f(\zeta; \sigma) \right] d\zeta,
  \end{gathered}
\end{equation}
where $q_0=q_0(\sg)$ and $u_0=u_0(\sg)$ are defined in \eqref{PII}, \eqref{eq:PII_BC}, and \eqref{eq:defn_u_nu}, with $\nu = 0$. Then by \eqref{eq:def_Fnu}, \eqref{eq_Fnuplus1}, and \eqref{eq:Fnuplus2}
\begin{align}
  F_0(z; s, \tau) = {}& \frac{2^{1/6}}{\sqrt{\pi}} \int_{\Sg_T} e^{2^{4/3}\tau \zeta^2 + 2^{2/3} i z \zeta} \big( f(\z; \sg) + g(\zeta; \sigma) \big) d\zeta, \label{eq:integral_F_0} \\
  F_1(z; s, \tau) = {}& \frac{2^{1/6}}{\sqrt{\pi}} \int_{\Sg_T} e^{2^{4/3}\tau \zeta^2 + 2^{2/3} i z \zeta} \big( f(\zeta; \sigma) - g(\z; \sg) \big) d\zeta, \label{eq:integral_F_1} \\
  F_2(z; s, \tau) = {}& F_0(z; s, \tau) + \frac{\alpha_0(4\tau - 2^{5/3} q_0)}{z} F_1(z; s, \tau) \notag \\
  & - \frac{2^{11/6} \alpha_0 i}{\sqrt{\pi} z} \int_{\Sg_T} e^{2^{4/3}\tau \zeta^2 + 2^{2/3} i z \zeta} \zeta \big( f(\z; \sg) + g(\zeta; \sigma) \big) d\zeta, \label{eq:integral_F_2}
\end{align}
where $\sg=2^{2/3}(2s-\tau^2)$ as in \eqref{eq:sigma_relation}, $f(\zeta; \sigma)$ and $g(\zeta; \sigma)$ are defined as in \eqref{tac18} and $\alpha_0$ is defined in \eqref{Backlund}. 

The formula \eqref{eq:n0_formulas}, along with \eqref{eq:inductive_F_nu+k}, prove Theorem \ref{thm:half-integer} provided that Conjecture \ref{conj:vanishing} holds. We will prove that the conjecture holds for half-integer $\al$ in the next subsection, thus completing the proof of Theorem \ref{thm:half-integer}. But first let us look specifically at the cases $\al=\pm1/2$ in order to prove Proposition \ref{lem:tacnode_relation}.

The $\alpha = -1/2$ case of \eqref{int_rank1} together with \eqref{eq:integral_F_0} implies (letting $\widetilde{\sigma} = 2^{2/3}(2\widetilde{s} - \tau^2)$)
\begin{equation}
  \begin{split}
    & K^{\tac, (-1/2)}(x,y; s, \tau) \\
    = {}& \frac{2^{1/3}}{\pi^2} \int_s^\infty\,d\widetilde{s}  \int_{\Sg_T}\,du\int_{\Sg_T}\,dv e^{2^{4/3}\tau (u^2-v^2) + 2^{2/3} i (x u+y v)} \big(f(u; \widetilde{\sg})+g(u; \widetilde{\sg})\big)  \big(f(v; \widetilde{\sg})+g(v; \widetilde{\sg})\big) \\
    = {}& \frac{1}{2^{4/3}\pi^2} \int_\sg^\infty\,d\widetilde{\sg}  \int_{\Sg_T}\,du\int_{\Sg_T}\,dv\, e^{2^{4/3}\tau (u^2-v^2) -2^{2/3} i (x u-y v)}  \big(f(u; \widetilde{\sg})+g(u; \widetilde{\sg})\big) \big(f(v; \widetilde{\sg})+g(v; \widetilde{\sg})\big),
  \end{split}
\end{equation}
where in the second identity we use the symmetry \eqref{rank3}, and make the change of variable $\widetilde{s}\mapsto \widetilde{\sg}$. Thus we find, by the comparison with \eqref{rank7a}, we obtain the identity \eqref{eq:ref_tac_relation}.

Analogously, the $\alpha = 1/2$ case of \eqref{int_rank1} together with \eqref{eq:integral_F_1} implies (letting $\widetilde{\sigma} = 2^{2/3}(2\widetilde{s} - \tau^2)$)
\begin{equation}
  \begin{split}
    & K^{\tac, (1/2)}(x,y; s, \tau) \\
    = {}& \frac{2^{1/3}}{\pi^2} \int_s^\infty\,d\widetilde{s}  \int_{\Sg_T}\,du\int_{\Sg_T}\,dv e^{2^{4/3}\tau (u^2-v^2) + 2^{2/3} i (x u+y v)} \big(f(u; \widetilde{\sg}) - g(u; \widetilde{\sg})\big)  \big(f(v; \widetilde{\sg}) - g(v; \widetilde{\sg})\big) \\
    = {}& \frac{1}{2^{4/3}\pi^2} \int_\sg^\infty\,d\widetilde{\sg}  \int_{\Sg_T}\,du\int_{\Sg_T}\,dv\, e^{2^{4/3}\tau (u^2-v^2) -2^{2/3} i (x u-y v)}  \big(f(u; \widetilde{\sg}) - g(u; \widetilde{\sg})\big) \big(f(v; \widetilde{\sg}) - g(v; \widetilde{\sg})\big).
  \end{split}
\end{equation}
Thus we find, by the comparison with \eqref{rank8a}, we obtain the identity \eqref{eq:abs_tac_relation}, with $t, \sigma$ related to $\tau, s$ by \eqref{eq:relation_t_sigma_s_tau}. This completes the proof of Proposition \ref{lem:tacnode_relation}.

\subsection{Proof of Conjecture \ref{conj:vanishing} when $\alpha = k + 1/2$}

By \eqref{eq:general_hard_tac_formula} with $\alpha = -1/2$, $2\pi i(x^2 - y^2)K^{\tac, (k + 1/2)}(x, y; s, \tau)$ can be expressed as a linear combination of $n_0(y; s, \tau)_i n_0(x; s, \tau)_j$ with $i, j = 1, 2, 3, 4$, so when $x \neq y$, we only need to show that for all $z \in (0, \infty)$ and $\tau \in \realR$, $n_0(z; s, \tau)_i \to \infty$ as $s \to +\infty$.

Since $\vec{n}_0(z; s, \tau) = m^{(0)} + m^{(3)}$ in the notation of \cite{Kuijlaars14}, with parameters $r_1 = r_2 = 1$ and $s_1 = s_2 = s$ and the same $\tau$. Let $\Ai(z)$ denote the Airy function, and for any real number $t$ let $Q_t(x)$ and $R_t(x, t)$ be the functions in $x$ defined by Airy resolvents, as in \cite[Formulas (2.16)--(2.20)]{Kuijlaars14}. Then we have, by \cite[Theorem 2.5]{Kuijlaars14}
\begin{multline}
  n_0(z; s, \tau)_1 = -\sqrt{2\pi} e^{-\tau z} \int^{\infty}_0 \Ai(z + 2s + 2^{1/3} w) e^{-2^{1/3}\tau w} Q_{\sigma}(w + \sigma) dw \\
  + \sqrt{2\pi} \Ai(-z + 2s) e^{\tau z} + \sqrt{2\pi} e^{\tau z} \int^{\infty}_0 \Ai(-z + 2s + 2^{1/3} w) e^{-2^{1/3}\tau w} R_{\sigma}(w + \sigma, \sigma) dw,
\end{multline}
\begin{multline}
  n_0(z; s, \tau)_2 = \sqrt{2\pi} \Ai(z + 2s) e^{-\tau z} + \sqrt{2\pi} e^{-\tau z} \int^{\infty}_0 \Ai(z + 2s + 2^{1/3} w) e^{-2^{1/3}\tau w} R_{\sigma}(w + \sigma, \sigma) dw \\
   - \sqrt{2\pi} e^{\tau z} \int^{\infty}_0 \Ai(-z + 2s + 2^{1/3} w) e^{-2^{1/3}\tau w} Q_{\sigma}(w + \sigma) dw,
\end{multline}
where $\sigma = 2^{2/3}(2s - \tau^2)$ as before, and by \cite[Formulas (2.14) and (2.15)]{Kuijlaars14}, $n_0(z; s, \tau)_3$ and $n_0(z; s, \tau)_4$ are linear combinations of $n_0(z; s, \tau)_1, n_0(z; s, \tau)_2$, and their derivatives with respect to $z$, such that the coefficients of the linear combinations are either independent of $s$ or polynomials in $s$.

It is well known that $\Ai(z)$ vanishes as $\exp(-(2/3)z^{3/2})$ as $z \to +\infty$ (see \cite[Formula (2.26)]{Kuijlaars14} for instance), and by the definitions of $Q_t(x)$ and $R_t(x, t)$, it is not hard to see that if $t$ is large enough, then
\begin{equation} \label{eq:estimate_Q_R}
 \lvert Q_t(x) \rvert < 1 \quad \text{and} \quad \lvert R_t(x, t) \rvert < 1 \quad \text{for all $x \in (t, \infty)$}. 
\end{equation}
(Actually stronger estimates of $Q_t(x)$ and $R_t(x, t)$ are possible, see \cite[Lemma 2.4]{Kuijlaars14} for the leading term in the asymptotic expansion of $Q_t(x)$ and $R_t(x, t)$ when $t$ is fixed and $x \to \infty$. But the crude estimate \eqref{eq:estimate_Q_R} suffices for us.)

Then we see that as $z, \tau$ are fixed and $s \to +\infty$, $n_0(z; s, \tau)_1$ and $n_0(z; s, \tau)_2$ vanish super-exponentially, and so do $n_0(z; s, \tau)_3$ and $n_0(z; s, \tau)_4$ who are the linear combinations of $n_0(z; s, \tau)_1$, $n_0(z; s, \tau)_2$ and their derivatives. Thus we prove Conjecture \ref{conj:vanishing} when $\alpha = k + 1/2$ and $x \neq y$.

For the remaining $x = y$ case, we use the property that $K^{\tac, (k + 1/2)}(x, y; \tau)$ is an analytic function in $x$ and $y$, although only positive real values of $x, y$ are meaningful in probability. Given $y > 0$, we consider $x = y + \epsilon e^{i\theta}$ for a small enough $\epsilon > 0$ and $\theta \in [0, 2\pi]$. Then by the argument above, for all $x$ on a small circle around $y$, $K^{\tac, (k + 1/2)}(x, y; s, \tau)$ vanishes uniformly as $s \to +\infty$. Thus by the analyticity of $K^{\tac, (k + 1/2)}(x, y; s, \tau)$ in $x$, we find that $K^{\tac, (k + 1/2)}(y, y; s, \tau)$ vanishes as $s \to +\infty$. Thus we complete the proof for Conjecture \ref{conj:vanishing} when $\alpha = k + 1/2$.

\appendix

\section{Formulas for $t^c$ and $d$ in Theorem \ref{thm:main}\ref{enu:thm:main_a}} \label{sec:formulas_t^c_d}

 Here we give the explicit, though not very simple, formula for $t^c$ and $d$ in Theorem \ref{prop:circle}\ref{enu:prop:circle_b}, in terms of $T$. Our formulas are taken from \cite{Liechty-Wang14-2}. First, we parametrize $T > \pi^2/2$ by $k \in (0, 1)$. For each $k$, we have the elliptic integrals
\begin{equation}\label{eq:complete_elliptic}
  \K :=  \K(k) = \int^1_0 \frac{ds}{\sqrt{(1 - s^2)(1 - k^2 s^2)}}, \qquad \E :=  \E(k) = \int^1_0 \frac{\sqrt{1 - k^2 s^2}}{\sqrt{1 - s^2}} ds.
\end{equation}
We further define
\begin{equation}\label{eq:def_tilde1}
  \widetilde{k} := \frac{2\sqrt{k}}{1 + k},
\end{equation}
and denote
\begin{equation}\label{eq:def_tilde2}
  \widetilde{\K} :=  \K(\widetilde{k}) = \int^1_0 \frac{ds}{\sqrt{(1 - s^2)(1 - \widetilde{k}^2 s^2)}}, \qquad \widetilde{\E} := \E(\widetilde{k}) = \int^1_0 \frac{\sqrt{1 - \widetilde{k}^2 s^2}}{\sqrt{1 - s^2}} ds.
\end{equation}
By \cite[Lemma 3.2]{Liechty-Wang14-2}\footnote{In \cite[Formula (149)]{Liechty-Wang14-2}, $\pi^2$ should be $\pi^2/4$.}, for all $T > T_c = \pi^2/2$, there is a unique $k$ such that
\begin{equation} \label{eq:T_parametrization}
  T = 2\widetilde{\K} \widetilde{\E} = 2 \int^1_0 \frac{ds}{\sqrt{(1 - s^2)(1 - \widetilde{k}^2 s^2)}} \int^1_0 \frac{\sqrt{1 - \widetilde{k}^2 s^2}}{\sqrt{1 - s^2}} ds.
\end{equation}
Then $t^c$ is expressed as \cite[Formula (20)]{Liechty-Wang14-2}
\begin{equation} \label{eq:T_c_in_introduction}
  t^c = \frac{2}{\widetilde{k}^2} \widetilde{\E} \left(\widetilde{\E} - (1 - \widetilde{k}^2) \widetilde{\K}\right) = 2 \int^1_0 \frac{\sqrt{1 - \widetilde{k}^2 s^2}}{\sqrt{1 - s^2}} ds \int^1_0 \frac{\sqrt{1 - s^2}}{\sqrt{1 - \widetilde{k}^2 s^2}} ds \in (0, T/2),
\end{equation}
and $d$ is expressed as \cite[Formulas (227), (236), and (239)]{Liechty-Wang14-2}
\begin{equation} \label{eq:d_defn}
  d = \left( \frac{1}{6\alpha^3}((1 + k^2) \E - (1 - k^2) \K) \right)^{\frac{1}{4}} = \left( \frac{k^2}{6 \alpha^3} \int^1_0 \frac{(1 - s^2) + (1 - k^2 s^2)}{\sqrt{(1 - s^2)(1 - k^2 s^2)}} ds \right)^{\frac{1}{4}} > 0.
\end{equation}
\begin{rmk}
 The parametrization of $T$ in \eqref{eq:T_parametrization} and the definition of $t^c$ in \eqref{eq:T_c_in_introduction} differ from those in \cite{Liechty-Wang14-2} by a factor of 2, see Remark \ref{rem:T_difference}.
\end{rmk}

\section{Equivalence with the kernel of Ferrari--Vet\H o} \label{sec:equivalence_FV}

We would like to show that the hard-edge tacnode kernel of Ferrari and \Veto\ appearing in \cite{Ferrari-Veto16} is the odd part of the symmetric tacnode kernel studied by several groups \cite{Delvaux-Kuijlaars-Zhang11, Adler-Ferrari-van_Moerbeke13, Johansson13} and culminated in their paper \cite{Ferrari-Veto12}. Since we only need the symmetric tacnode kernel, we take $K^{\tac}_{s, t}(\xi, \eta; \sigma)$ defined in \eqref{eq:tacnode_kernel} and \eqref{eq:essential_tacnode} as the standard form, as we do throughout the paper, and note that the kernel defined in \cite{Delvaux13} with the implicit parameter $\lambda = 1$,
\begin{equation}
  \lcal_{\tac}(u,v;\sg,\tau_1, \tau_2) = \widetilde{\lcal}_{\tac}(u,v;\sg,\tau_1, \tau_2) - 1_{\tau_1 < \tau_2} \phi_{2\tau_1, 2\tau_2}(u, v),
\end{equation}
where $\phi_{s, t}(u, v)$ is defined in \eqref{eq:nonessential_Pearcey} and $\widetilde{\lcal}_{\tac}(u,v;\sg,\tau_1, \tau_2)$ defined in \cite[Formula (2.29)]{Delvaux13} and \cite[Formula (45)]{Liechty-Wang14-2}, is equivalent to $K^{\tac}_{s, t}(\xi, \eta; \sigma)$ and satisfies \cite[Proposition 1.5]{Liechty-Wang14-2} 
\begin{equation}
  K^{\tac}_{\tau_1, \tau_2}(\xi, \eta; \sigma) = 2^{-2/3} \lcal_{\tac}(2^{-2/3} \xi, 2^{-2/3} \eta; \sigma, 2^{-7/3} \tau_1, 2^{-7/3} \tau_2).
\end{equation}
Thus to prove Proposition \ref{prop:FV_equiv}, we only need to show that
\begin{multline} \label{eq:ess_FV_equiv}
  \widehat{K}^{\ext}(\tau_1, u; \tau_2, v) + 1_{\tau_1 < \tau_2} [\phi_{2\tau_1, 2\tau_2}(u, v) - \phi_{2\tau_1, 2\tau_2}(u, -v)] = \\
  \widetilde{\lcal}_{\tac}(u,v; 2^{2/3}R,\tau_1, \tau_2) -\widetilde{\lcal}_{\tac}(u,-v; 2^{2/3}R, \tau_1, \tau_2),
\end{multline}
where $R$ is the parameter used implicitly in \cite[Formulas (2.34), (2.30) and (2.31)]{Ferrari-Veto16}. Our strategy to prove \eqref{eq:ess_FV_equiv} is to transform its right-hand side repeatedly, and at last show that it agrees with the left-hand side. The derivation below is based on formulas in \cite{Liechty-Wang14-2} and \cite{Baik-Liechty-Schehr12}.

By  \cite[Formula (2.29)]{Delvaux13}, the right-hand side of \eqref{eq:ess_FV_equiv} is expressed as
\begin{equation} \label{eq:integral_FV_to_be_equiv}
  \frac{1}{2^{2/3}} \int^{\infty}_{2^{2/3}R} [\widehat{p}_1(u; s, \tau_1) - \widehat{p}_1(-u; s, \tau_1)] [\widehat{p}_1(v; s, -\tau_2) - \widehat{p}_1(-v; s, -\tau_2)] ds,
\end{equation}
where $\widehat{p}_1$ is defined in \cite[Formula (2.26) and Lemma 4.3]{Delvaux13}. Note that since we only consider the symmetric tacnode kernel with $\lambda = 1$, $\widehat{p}_1$ and $\widehat{p}_2$ defined in \cite[Formula (2.26)]{Delvaux13} are equivalent, see \cite[Formulas (45) and (46)]{Liechty-Wang14-2}. Next, we recall the function $b_{\tau, z, \sigma}(x)$ defined in \cite[Formula (43)]{Liechty-Wang14-2}, the integral operators $\B_s$ and $\A_s = \B^2_s$ defined in \cite[Formula (40)]{Liechty-Wang14-2} (They were  defined in \cite{Baik-Liechty-Schehr12} and \cite{Delvaux13} with different notations). By \cite[Formulas (41) and (46)]{Liechty-Wang14-2}, we have that the integral \eqref{eq:integral_FV_to_be_equiv} can be expressed as
\begin{equation}\label{eq:formula1}
  \frac{1}{2^{2/3}} \int^\infty_{2^{2/3}R} \langle b_{\tau_1, u, s} - b_{\tau_1, -u, s}, \left( \1 - \B_s\right)^{-1} \de_0\rangle_0   \langle b_{-\tau_2, v, s} - b_{-\tau_2, -v, s}, \left( \1 - \B_s\right)^{-1} \de_0\rangle_0\, ds,
\end{equation}
where $\langle \cdot, \cdot \rangle_0$ is the inner product on $L_2[0,\infty)$, see \cite[Formula (42)]{Liechty-Wang14-2}. Now denote
\begin{equation}\label{def:psi}
  \psi(x; z, s, \tau):=e^{-\frac{2}{3}\tau^3 - \tau x - \tau s} \left[e^{-\tau z} \Ai(x+z+s+\tau^2) - e^{\tau z} \Ai(x-z+s+\tau^2)\right],
\end{equation}
and
\begin{equation}
  h(z, s, \tau):=\langle \psi(2^{1/3} x; z, 2^{-2/3} s, \tau) , \left( \1 - \B_s\right)^{-1} \de_0\rangle_0,
\end{equation}
where $x$ is the variable of integration in the inner product. Then \eqref{eq:formula1} is written as
\begin{equation}\label{eq:formula2}
    % \widetilde{\lcal}_{\tac}(u,v;\sg,\tau_1, \tau_2) -\widetilde{\lcal}_{\tac}(u,-v;\sg,\tau_1, \tau_2) = {}& 
    \frac{1}{2^{2/3}} \int^\infty_{2^{2/3}R} h(u, s, \tau_1) h(v, s, -\tau_2) \, ds
    = 2^{1/3} \int_0^\infty h(u, 2y + 2^{2/3}R, \tau_1) h(v, 2y + 2^{2/3}R, -\tau_2) \, dy.
\end{equation}
Note that our $\psi(x; z, s, \tau)$ is equal to $\widehat{\Phi}^x_{-\tau}(z)$ and $\widehat{\Psi}^x_{\tau}(z)$ defined in \cite[Formula (2.30)]{Ferrari-Veto16}.%  They use the functions $\hat{\Phi}^\xi_T(U)$ and $\hat{\Psi}^\xi_T(U)$, which are given in terms of \eqref{def:psi}
% as
% \begin{equation}\label{def:FV_functions}
% \widehat{\Phi}^\xi_T(U) = \psi(\xi; U, R, -T), \qquad \widehat{\Psi}^\z_T(U) = \psi(\z; U, R, T).
% \end{equation}

% and

Following  \cite[Section 2]{Baik-Liechty-Schehr12}, we now change to work in $L_2(\realR)$ instead of $L_2[0,\infty)$. We use $\widetilde{\B}_s$ to denote the operator with the same kernel as $\B_s$, but acting on $L_2(\realR)$ instead of $L_2[0,\infty)$. Also for any $r\in \realR$ introduce the operator $\Pi_r$ to be the projection onto $L_2[r,\infty)$ and $\T_r$ to be the translation operator, i.e. $(\T_r f)(x)= f(x+r)$.
Repeating the analysis in \cite[Section 2]{Baik-Liechty-Schehr12} we find that $h(u, 2y+\sg, \tau)$ can be written as ($\langle \cdot, \cdot \rangle$ is the inner product in $L^2(\realR)$)
\begin{equation} \label{eq:h_by_BLS}
\begin{aligned}
h(u, 2y+\sg, \tau) &=\langle \psi(2^{1/3} x; u, 2^{-2/3} (2y+\sg), \tau), \Pi_0(\1 - \Pi_0 \widetilde{\B}_{2y+\sg} \Pi_0)^{-1} \Pi_0 \de_0\rangle \\
&=\langle \psi(2^{1/3} x; u, 2^{-2/3} (2y+\sg), \tau),  \Pi_y(\1 - \Pi_y \widetilde{\B}_{\sg} \Pi_y)^{-1} \Pi_y \T_{-y}\de_0\rangle \\
&=\langle \T_{-y}\psi(2^{1/3} x; u, 2^{-2/3} (2y+\sg), \tau),  \Pi_y(\1 - \Pi_y \widetilde{\B}_{\sg} \Pi_y)^{-1} \Pi_y \de_y\rangle. \\
\end{aligned}
\end{equation}
Noticing that 
\begin{equation}
\psi(2^{1/3} (x-y); u, 2^{-2/3} (2y+\sg), \tau) = \psi(2^{1/3} x; u, 2^{-2/3} \sg, \tau),
\end{equation}
we find that with $\sigma = 2^{2/3}R$,
\begin{equation}
  h(u, 2y + 2^{2/3}R, \tau) = \langle \psi(2^{1/3} x; u, R, \tau),  \Pi_y(\1 - \Pi_y \widetilde{\B}_{2^{2/3}R} \Pi_y)^{-1} \Pi_y \de_y\rangle.
\end{equation}
Define the resolvent 
\begin{equation}
  {\bf R}_y:= (\1 - \Pi_y \B_{2^{2/3}R} \Pi_y)^{-1} - \1.
\end{equation}
Then we can write $h(u, 2y + 2^{2/3}R, \tau_1) $ and $h(v, 2y + 2^{2/3}R, -\tau_2) $ as
\begin{equation}\label{eq:shifted_h}
  \begin{aligned}
    h(u, 2y + 2^{2/3}R, \tau_1) &=  \Psi_1(y)+  \int_{-\infty}^\infty \Psi_1(x) {\bf R}_y(x,y)\,dx, \\
    h(v, 2y + 2^{2/3}R, -\tau_2) &=  \Psi_2(y)+ \int_{-\infty}^\infty \Psi_2(x) {\bf R}_y(x,y)\,dx, 
\end{aligned}
\end{equation}
where we set
\begin{equation} \label{eq:defn_Psi_12}
\Psi_1(y):=\psi(2^{1/3} y, u, R, \tau_1), \qquad \Psi_2(y):=\psi(2^{1/3} y, v, R, -\tau_2).
\end{equation}
 Thus by \eqref{eq:h_by_BLS} -- \eqref{eq:defn_Psi_12} and using the symmetry of the resolvent kernel, \eqref{eq:formula2} becomes
\begin{multline}\label{eq:four_terms}
2^{1/3}  \bigg[\int_0^\infty\,dy\, \Psi_1(y) \Psi_2(y) +  \int_0^\infty\,dy  \int_{-\infty}^\infty\,dx_2\, \Psi_1(y){\bf R}_y(y,x_2) \Psi_2(x_2) \\
+ \int_0^\infty\,dy \, \int_{-\infty}^\infty\,dx_1\, \Psi_1(x_1) {\bf R}_y(x_1,y)\Psi_2(y) \\
+\int_0^\infty\,dy \int_{-\infty}^\infty\,dx_1 \int_{-\infty}^\infty \,dx_2 \,\Psi_1(x_1) {\bf R}_y(x_1,y){\bf R}_y(y,x_2)\Psi_2(x_2) \bigg].
\end{multline}
Notice that the integrands in \eqref{eq:shifted_h} vanish for $x<y$ due to the resolvent kernel, so all integrals above may be taken over $[0,\infty)$. Replacing $y$ with $x_1$ in the first double integral in \eqref{eq:four_terms}, and $y$ with $x_2$ in the second double integral, we can then write \eqref{eq:four_terms}  as
\begin{multline}\label{eq:four_terms1}
2^{1/3}  \bigg[\int_0^\infty\,dy\, \Psi_1(y) \Psi_2(y) + \\ \int_0^\infty\,dx_1  \int_0^\infty\,dx_2\,  \Psi_1(x_1)\bigg({\bf R}_{x_1}(x_1,x_2)  + {\bf R}_{x_2}(x_1,x_2) 
+\int_0^\infty\,dy \, {\bf R}_y(x_1,y){\bf R}_y(y,x_2)\bigg)\Psi_2(x_2) \bigg].
\end{multline}
Applying \cite[Lemma 2.1]{Baik-Liechty-Schehr12}, this is
\begin{equation}\label{eq:two_terms1}
2^{1/3}  \bigg[\int_0^\infty\,dy\, \Psi_1(y) \Psi_2(y) + \\ \int_0^\infty\,dx_1  \int_0^\infty\,dx_2\,  \Psi_1(x_1){\bf R}_0(x_1, x_2)\Psi_2(x_2) \bigg],
\end{equation}
which is simply
\begin{equation}\label{eq:one_ters}
  2^{1/3} \int_0^\infty\,dx_1  \int_0^\infty\,dx_2\,  \Psi_1(x_1)(\1 -\Pi_0 \widetilde{\B}_{2^{2/3}R} \Pi_0)^{-1}(x_1, x_2)\Psi_2(x_2) .
\end{equation}
We note that $\Psi_1$ and $\Psi_2$ are expressed in $\psi(x; z, s, \tau)$ in \eqref{def:psi}, which is equal to $\widehat{\Phi}^x_{-\tau}(z)$ and $\widehat{\Psi}^x_{\tau}(z)$ defined in \cite[Formula (2.30)]{Ferrari-Veto16}. Hence \eqref{eq:one_ters} is expressed as (with $\zeta = 2^{1/3}x_1$ and $\xi = 2^{1/3} x_2$)
\begin{multline} \label{eq:final_FV_argument}
  2^{1/3} \int_0^\infty\,dx_1  \int_0^\infty\,dx_2\,  \widehat{\Psi}^{2^{1/3} x_1}_{\tau_1} (u)(\1 -\Pi_0 \widetilde{\B}_{2^{2/3}R} \Pi_0)^{-1}(x_1, x_2)\widehat{\Phi}^{2^{1/3} x_2}_{\tau_2} (v)  \\
  =2^{-1/3} \int_0^\infty\,d\xi  \int_0^\infty\,d\z\,  \widehat{\Psi}^{\xi}_{\tau_1} (u)(\1 -\B_{2^{2/3}R})^{-1}(2^{-1/3}\xi, 2^{-1/3}\z)\widehat{\Phi}^{\z}_{\tau_2} (v).
\end{multline}
To prove \eqref{eq:ess_FV_equiv}, we just need to show that the right-hand side of \eqref{eq:final_FV_argument} is equal to the second term of \cite[Equation (2.34)]{Ferrari-Veto16}. They are equal if we have the identity of the operators on $L^2[0, \infty)$
\begin{equation} \label{eq:last_eq}
  2^{-1/3} (\1 -\B_{2^{2/3}R})^{-1}(2^{-1/3}\xi, 2^{-1/3}\z) = (\1 - \widehat{K}_0)^{-1}(\zeta, \xi),
\end{equation}
where $\widehat{K}_0$ is defined in \cite[Formula (2.31)]{Ferrari-Veto16}. Since \eqref{eq:last_eq} is readily checked by the definitions of $\B_s$ and $\widehat{K}_0$, we finish the proof of \eqref{eq:ess_FV_equiv} and hence the proof of Proposition \ref{prop:FV_equiv}.

\subsection{A variation of $\NIBMabs$ similar to the model of Ferrari--\Veto} \label{subsec:similar_model}

Consider $2n$ particles in nonintersecting Brownian brideges between two absorbing walls placed at $\pi$ and $-\pi$, from the common start point $0$ to the common end point $0$, during the time span $[0, T]$. Denote the particles and their trajectories as $-\pi < x_1(t) < \dotsb < x_{2n}(t) < \pi$. Here we assume the particles have diffusion parameter $n^{-1/2}$. A heuristic symmetry argument implies that the trajectories $x_{n + 1}(t), x_{n + 2}(t), \dotsc, x_{2n}(t)$ have roughly the same behavior as the $n$ particles in $\NIBMabs$. If the total time $T$ is close to $T_c = \pi^2/2$, then we observe the limiting hard-edge tacnode process close to $\pi$. Similarly the same limiting process occurs close to $-\pi$. If we remove one absorbing wall, the model becomes the same as that in \cite{Ferrari-Veto16}. When $T$ is close to $T_c$, it is suggested by \cite{Baik-Liu14} that the two absorbing walls affect the nonintersecting Brownian bridges asymptotically independently, as $n \to \infty$. Thus the two-wall model should have the same hard-edge tacnode limit as the one-wall model in \cite{Ferrari-Veto16}.

The argument above is not rigorous. But by the method in this paper, we can solve the aforementioned variation of $\NIBMabs$ and derive the hard-edge tacnode limit rigorously, although our method does not apply directly for the model in \cite{Ferrari-Veto16}.

\subsubsection*{Acknowledgements}

K.L. is supported by a grant from the Simons Foundation (\#357872, Karl Liechty).

\noindent D.W. is supported by the Singapore AcRF Tier 1 grant R-146-000-217-112.

\noindent Both authors thank Patrik Ferrari and Balint \Veto\ for useful comments. We also thank Tom Claeys for discussion at the early stage of the project.

% \bibliographystyle{plain}
% \bibliography{../bibliography/bibliography}

\begin{thebibliography}{10}

\bibitem{Abramowitz-Stegun64}
Milton Abramowitz and Irene~A. Stegun.
\newblock {\em Handbook of mathematical functions with formulas, graphs, and
  mathematical tables}, volume~55 of {\em National Bureau of Standards Applied
  Mathematics Series}.
\newblock For sale by the Superintendent of Documents, U.S. Government Printing
  Office, Washington, D.C., 1964.

\bibitem{Adler-Ferrari-van_Moerbeke13}
Mark Adler, Patrik~L. Ferrari, and Pierre van Moerbeke.
\newblock Nonintersecting random walks in the neighborhood of a symmetric
  tacnode.
\newblock {\em Ann. Probab.}, 41(4):2599--2647, 2013.

\bibitem{Adler-Johansson-van_Moerbeke14}
Mark Adler, Kurt Johansson, and Pierre van Moerbeke.
\newblock Double {A}ztec diamonds and the tacnode process.
\newblock {\em Adv. Math.}, 252:518--571, 2014.

\bibitem{Adler-Orantin-van_Moerbeke10}
Mark Adler, Nicolas Orantin, and Pierre van Moerbeke.
\newblock Universality for the {P}earcey process.
\newblock {\em Phys. D}, 239(12):924--941, 2010.

\bibitem{Adler-van_Moerbeke-Wang11}
Mark Adler, Pierre van Moerbeke, and Dong Wang.
\newblock Random matrix minor processes related to percolation theory.
\newblock {\em Random Matrices Theory Appl.}, 2(4):1350008, 72, 2013.

\bibitem{Baik-Liechty-Schehr12}
Jinho Baik, Karl Liechty, and Gr{\'e}gory Schehr.
\newblock On the joint distribution of the maximum and its position of the
  {$\rm Airy_2$} process minus a parabola.
\newblock {\em J. Math. Phys.}, 53(8):083303, 13, 2012.

\bibitem{Baik-Liu14}
Jinho Baik and Zhipeng Liu.
\newblock Discrete {T}oeplitz/{H}ankel determinants and the width of
  nonintersecting processes.
\newblock {\em Int. Math. Res. Not. IMRN}, (20):5737--5768, 2014.

\bibitem{Bleher-Kuijlaars07}
Pavel~M. Bleher and Arno B.~J. Kuijlaars.
\newblock Large {$n$} limit of {G}aussian random matrices with external source.
  {III}. {D}ouble scaling limit.
\newblock {\em Comm. Math. Phys.}, 270(2):481--517, 2007.

\bibitem{Borodin-Ferrari08a}
Alexei Borodin and Patrik~L. Ferrari.
\newblock Anisotropic growth of random surfaces in {$2+1$} dimensions.
\newblock {\em Comm. Math. Phys.}, 325(2):603--684, 2014.

\bibitem{Borodin-Kuan10}
Alexei Borodin and Jeffrey Kuan.
\newblock Random surface growth with a wall and {P}lancherel measures for {$\rm
  O(\infty)$}.
\newblock {\em Comm. Pure Appl. Math.}, 63(7):831--894, 2010.

\bibitem{Brezin-Hikami98}
E.~Br{\'e}zin and S.~Hikami.
\newblock Level spacing of random matrices in an external source.
\newblock {\em Phys. Rev. E (3)}, 58(6, part A):7176--7185, 1998.

\bibitem{Brezin-Hikami98a}
E.~Br{\'e}zin and S.~Hikami.
\newblock Universal singularity at the closure of a gap in a random matrix
  theory.
\newblock {\em Phys. Rev. E (3)}, 57(4):4140--4149, 1998.

\bibitem{Cerenzia15}
Mark Cerenzia.
\newblock A path property of {D}yson gaps, {P}lancherel measures for $
  \mathrm{Sp}(\infty)$, and random surface growth, 2015.
\newblock arXiv:1506.08742.

\bibitem{Cerenzia-Kuan16}
Mark Cerenzia and Jeffrey Kuan.
\newblock Hard-edge asymptotics of the {J}acobi growth process, 2016.
\newblock arXiv:1608.06384.

\bibitem{Corwin-Hammond11}
Ivan Corwin and Alan Hammond.
\newblock Brownian {G}ibbs property for {A}iry line ensembles.
\newblock {\em Invent. Math.}, 195(2):441--508, 2014.

\bibitem{Delvaux13a}
Steven Delvaux.
\newblock Non-intersecting squared {B}essel paths at a hard-edge tacnode.
\newblock {\em Comm. Math. Phys.}, 324(3):715--766, 2013.

\bibitem{Delvaux13}
Steven Delvaux.
\newblock The tacnode kernel: equality of {R}iemann--{H}ilbert and {A}iry
  resolvent formulas, 2013.
\newblock arXiv:1211.4845v2.

\bibitem{Delvaux-Kuijlaars-Zhang11}
Steven Delvaux, Arno B.~J. Kuijlaars, and Lun Zhang.
\newblock Critical behavior of nonintersecting {B}rownian motions at a tacnode.
\newblock {\em Comm. Pure Appl. Math.}, 64(10):1305--1383, 2011.

\bibitem{Delvaux-Veto14}
Steven Delvaux and B{\'a}lint Vet{\H{o}}.
\newblock The hard edge tacnode process and the hard edge {P}earcey process
  with non-intersecting squared {B}essel paths.
\newblock {\em Random Matrices Theory Appl.}, 4(2):1550008, 57, 2015.

\bibitem{Desrosiers-Forrester08}
Patrick Desrosiers and Peter~J. Forrester.
\newblock A note on biorthogonal ensembles.
\newblock {\em J. Approx. Theory}, 152(2):167--187, 2008.

\bibitem{Eynard-Mehta98}
Bertrand Eynard and Madan~Lal Mehta.
\newblock Matrices coupled in a chain. {I}. {E}igenvalue correlations.
\newblock {\em J. Phys. A}, 31(19):4449--4456, 1998.

\bibitem{Feller71}
William Feller.
\newblock {\em An introduction to probability theory and its applications.
  {V}ol. {II}.}
\newblock Second edition. John Wiley \& Sons, Inc., New York-London-Sydney,
  1971.

\bibitem{Ferrari-Veto12}
Patrik~L. Ferrari and B{\'a}lint Vet{\H{o}}.
\newblock Non-colliding {B}rownian bridges and the asymmetric tacnode process.
\newblock {\em Electron. J. Probab.}, 17:no. 44, 17, 2012.

\bibitem{Ferrari-Veto16}
Patrik~L. Ferrari and B{\'a}lint Vet{\H{o}}.
\newblock The hard-edge tacnode process for {B}rownian motion, 2016.
\newblock arXiv:1608.00394.

\bibitem{Flaschka-Newell80}
Hermann Flaschka and Alan~C. Newell.
\newblock Monodromy- and spectrum-preserving deformations. {I}.
\newblock {\em Comm. Math. Phys.}, 76(1):65--116, 1980.

\bibitem{Fokas-Its-Kapaev-Novokshenov06}
Athanassios~S. Fokas, Alexander~R. Its, Andrei~A. Kapaev, and Victor~Yu.
  Novokshenov.
\newblock {\em Painlev\'e transcendents}, volume 128 of {\em Mathematical
  Surveys and Monographs}.
\newblock American Mathematical Society, Providence, RI, 2006.
\newblock The Riemann-Hilbert approach.

\bibitem{Forrester-Majumdar-Schehr11}
Peter~J. Forrester, Satya~N. Majumdar, and Gr{\'e}gory Schehr.
\newblock Non-intersecting {B}rownian walkers and {Y}ang-{M}ills theory on the
  sphere.
\newblock {\em Nuclear Phys. B}, 844(3):500--526, 2011.

\bibitem{Geudens-Zhang15}
Dries Geudens and Lun Zhang.
\newblock Transitions between critical kernels: from the tacnode kernel and
  critical kernel in the two-matrix model to the {P}earcey kernel.
\newblock {\em Int. Math. Res. Not. IMRN}, (14):5733--5782, 2015.

\bibitem{Girotti14a}
Manuela Girotti.
\newblock Asymptotics of the tacnode process: a transition between the gap
  probabilities from the tacnode to the {A}iry process.
\newblock {\em Nonlinearity}, 27(8):1937--1968, 2014.

\bibitem{Gorsky-Milekhin-Nechaev16}
Alexander Gorsky, Alexey Milekhin, and Sergei Nechaev.
\newblock {D}ouglas-{K}azakov on the road to superfluidity: from random walks
  to black holes, 2016.
\newblock arXiv:1604.06381.

\bibitem{Gross-Matytsin95}
David~J. Gross and Andrei Matytsin.
\newblock Some properties of large-{$N$} two-dimensional {Y}ang-{M}ills theory.
\newblock {\em Nuclear Phys. B}, 437(3):541--584, 1995.

\bibitem{Hachem-Hardy-Najim16}
Walid Hachem, Adrien Hardy, and Jamal Najim.
\newblock Large complex correlated {W}ishart matrices: the {P}earcey kernel and
  expansion at the hard edge.
\newblock {\em Electron. J. Probab.}, 21:Paper No. 1, 36, 2016.

\bibitem{Hastings-McLeod80}
S.~P. Hastings and J.~B. McLeod.
\newblock A boundary value problem associated with the second {P}ainlev\'e
  transcendent and the {K}orteweg-de\thinspace {V}ries equation.
\newblock {\em Arch. Rational Mech. Anal.}, 73(1):31--51, 1980.

\bibitem{Johansson13}
Kurt Johansson.
\newblock Non-colliding {B}rownian motions and the extended tacnode process.
\newblock {\em Comm. Math. Phys.}, 319(1):231--267, 2013.

\bibitem{Karlin-McGregor59}
Samuel Karlin and James McGregor.
\newblock Coincidence probabilities.
\newblock {\em Pacific J. Math.}, 9:1141--1164, 1959.

\bibitem{Katori15}
Makoto Katori.
\newblock {\em Bessel processes, {S}chramm-{L}oewner evolution, and the {D}yson
  model}, volume~11 of {\em SpringerBriefs in Mathematical Physics}.
\newblock Springer, \lbrack Singapore\rbrack, 2015.

\bibitem{Kuan13}
Jeffrey Kuan.
\newblock Asymptotics of a discrete-time particle system near a reflecting
  boundary.
\newblock {\em J. Stat. Phys.}, 150(2):398--411, 2013.

\bibitem{Kuijlaars-Martinez_Finkelshtein-Wielonsky11}
A.~B.~J. Kuijlaars, A.~Mart{\'{\i}}nez-Finkelshtein, and F.~Wielonsky.
\newblock Non-intersecting squared {B}essel paths: critical time and double
  scaling limit.
\newblock {\em Comm. Math. Phys.}, 308(1):227--279, 2011.

\bibitem{Kuijlaars14}
Arno Kuijlaars.
\newblock The tacnode {R}iemann-{H}ilbert problem.
\newblock {\em Constr. Approx.}, 39(1):197--222, 2014.

\bibitem{Levy-Maida15}
Thierry L{\'e}vy and Myl{\`e}ne Ma{\"{\i}}da.
\newblock On the {D}ouglas-{K}azakov phase transition. {W}eighted potential
  theory under constraint for probabilists.
\newblock In {\em Mod\'elisation {A}l\'eatoire et {S}tatistique---{J}ourn\'ees
  {MAS} 2014}, volume~51 of {\em ESAIM Proc. Surveys}, pages 89--121. EDP Sci.,
  Les Ulis, 2015.

\bibitem{Liechty-Wang14-2}
Karl Liechty and Dong Wang.
\newblock Nonintersecting {B}rownian motions on the unit circle.
\newblock {\em Ann. Probab.}, 44(2):1134--1211, 2016.

\bibitem{Liechty-Wang16}
Karl Liechty and Dong Wang.
\newblock Two {L}ax systems for the {P}ainlev\'{e} {II} equation, and two
  related kernels in random matrix theory, 2016.
\newblock To appear in SIAM. J. Math. Anal., arXiv:1601.01603.

\bibitem{Mo10}
M.~Y. Mo.
\newblock Universality in complex {W}ishart ensembles for general covariance
  matrices with 2 distinct eigenvalues.
\newblock {\em J. Multivariate Anal.}, 101(5):1203--1225, 2010.

\bibitem{Okounkov-Reshetikhin07}
Andrei Okounkov and Nicolai Reshetikhin.
\newblock Random skew plane partitions and the {P}earcey process.
\newblock {\em Comm. Math. Phys.}, 269(3):571--609, 2007.

\bibitem{Tracy-Widom06}
Craig~A. Tracy and Harold Widom.
\newblock The {P}earcey process.
\newblock {\em Comm. Math. Phys.}, 263(2):381--400, 2006.

\bibitem{Tracy-Widom07}
Craig~A. Tracy and Harold Widom.
\newblock Nonintersecting {B}rownian excursions.
\newblock {\em Ann. Appl. Probab.}, 17(3):953--979, 2007.

\bibitem{Wasow87}
Wolfgang Wasow.
\newblock {\em Asymptotic expansions for ordinary differential equations}.
\newblock Dover Publications, Inc., New York, 1987.
\newblock Reprint of the 1976 edition.

\end{thebibliography}

\end{document}